\documentclass[11pt]{amsart}

\usepackage{amsthm, amsfonts, amssymb, amscd, rotating}
\usepackage[pagebackref,colorlinks]{hyperref}
\usepackage{tikz-cd}
\usepackage{geometry}
\usepackage{marginnote}
\usepackage{aligned-overset}
\usepackage[utf8]{inputenc}
\usepackage{xcolor}
\definecolor{darkgreen}{rgb}{0,0.5,0}

\theoremstyle{definition}
\newtheorem{ntn}{Notation}[section]
\newtheorem{dfn}[ntn]{Definition}
\theoremstyle{plain}
\newtheorem{lem}[ntn]{Lemma}
\newtheorem{prp}[ntn]{Proposition}
\newtheorem{thm}[ntn]{Theorem}
\newtheorem{introthm}{Theorem}
\newtheorem{introcor}{Corollary}
\newtheorem{introconj}{Conjecture}

\newtheorem{cor}[ntn]{Corollary}
\newtheorem{conj}[ntn]{Conjecture}
\theoremstyle{definition}

\newtheorem{rem}[ntn]{Remark}
\newtheorem{exa}[ntn]{Example}

\numberwithin{equation}{section}

\newcommand{\z}{\mathbb{Z}}
\newcommand{\q}{\mathbb{Q}}

\newcommand{\F}{\mathbb{F}}

\newcommand{\OO}{\mathcal{O}}

\newcommand{\ppp}{\mathfrak{p}}
\newcommand{\qqq}{\mathfrak{q}}

\newcommand{\arr}{\rightarrow}
\newcommand{\larr}{\longrightarrow}
\newcommand{\harr}{\hookrightarrow}
\newcommand{\two}{\twoheadrightarrow}
\newcommand{\Gg}{\Gamma}

\newcommand{\GL}{{\rm GL}}

\newcommand{\SL}{{\rm SL}}
\newcommand{\GE}{{\rm GE}}

\renewcommand{\char}{{\rm char}}

\newcommand{\coker}{{\rm coker}}
\newcommand{\im}{{\rm im}}

\newcommand{\B}{{\rm B}}

\newcommand{\St}{{\rm St}}

\newcommand{\Spec}{{\rm Spec}}

\newtheoremstyle{athm}
{}
{}
{\itshape}
{}
{\scshape}
{}
{.5em}
{\thmnote{#3}}
\theoremstyle{athm}

\begin{document}

\title{The Schur multiplier of $\SL_2$ and Dedekind zeta-functions over $S$-integers}

\author[P. A. Alves]{Pedro H. Amorim}
\author[I. V. Picinini]{Isadora V. Picinini}
\author[B. R. Ramos]{Bruno R. Ramos}
\author[T. Verissimo]{Thiago Verissimo}

\address{\sf Instituto de Ci\^encias Matem\'aticas e de Computa\c{c}\~ao (ICMC), 
Universidade de S\~ao Paulo, S\~ao Carlos, Brasil}
\address{\sf Beijing Institute of Mathematical Sciences and Applications (BIMSA), Beijing, China}
\address{\sf Department of Mathematics, Aarhus University, Ny Munkegade 118, 8000 Aarhus C, Denmark \vspace{0.5cm}}

\email{amorim.alves@usp.br}
\email{isadoravanzella@usp.br}
\email{brramos2050@gmail.br}
\email{thiagovlg@usp.br}

\begin{abstract}
In this paper, we obtain an exact sequence connecting $H_2(\rm{SL}_2(\mathcal{O}_{K,S}), \mathbb{Z})$ to $H_2(\rm{SL}_2(\mathcal{O}_{K,T}), \mathbb{Z})$, where $\mathcal{O}_{K,S}$ is a ring of $S$-integers and $T$ is a set of primes containing $S$. We apply this sequence to establish a relation between $H_2(\rm{SL}_2(\mathcal{O}_{K,S}), \mathbb{Z})$ with the second $K$-group $K_2(\mathcal{O}_{K,S})$, for $S$ large enough. This leads to a description of the rank and size of the torsion of $H_2(\rm{SL}_2(\mathcal{O}_{K,S}), \mathbb{Z})$. As an application, we propose a homological version of the Birch-Tate formula (conjecture) under these assumptions. \\

\noindent \textsf{MSC(2020): 20J06; 19C09; 19F27}\\

\noindent \textsf{Key words: $S$-integers, Special linear group, Schur multiplier, $K_2$-group, Dedekind zeta function}
\end{abstract}

\maketitle

\section*{Introduction} \label{int}

The (co)homology of arithmetic groups  has recently received systematic attention as it appears in many branches of mathematics as a fundamental tool to encode topological, number-theoretical and geometric data. Notably, in recent works, it was employed to investigate moments of $L$-functions \cite{MPPRW2026} and modular forms \cite{ashyasaki2021}. The explicit determination of the torsion in the homology groups is of particular interest, as it provides congruences associated to Eisenstein's series \cite{deng2023}. Furthermore, in \cite{rahm2025}, homology of linear groups is used to connect modular form spaces and algebraic $K$-theory in the framework of the Calegari-Venkatesh conjecture. In the present article, we explore the connections concerning Dedekind zeta functions.

Despite the wide range of applications, computing these homology groups explicitly is not an easy task. In fact, finding the structure of the groups $H_d(\SL_n(\OO_{K,S}), \z)$, where $\OO_{K,S}$ is a ring of $S$-integers, is still an open problem in general. Progress in this line was made in classical homological stability by Charney \cite{charney1980}, and by van der Kallen \cite{kallen1980}. Nevertheless, the unstable range remains a tough challenge. In \cite{BBT2025-2}, $H_1(\SL_2(\OO_{K,S}), \z)$ was described for $\OO_{K,S}$ having infinitely many units. The case of the second homology is much harder, and was solved for $\OO_{K,S}=\z[1/n]$, when $n$ is a prime number \cite{an1998}, and when $2, 3, 5, 7, 13 \mid n$ \cite{BBT2025-1}.

The main goal and motivation of this article is to investigate the structure of the group $H_2(\SL_2(\OO_{K,S}), \z)$. We heavily build on Hutchinson's work \cite{h2016} to explore the connections between the second integral homology of $\SL_2$ over a ring of $S$-integers and the group $K_2$. Here, we always assume $\OO_{K,S}$ is not totally imaginary and has infinitely many units. We denote by $\mathcal{S}_2$ the set of all prime ideals in $\OO_K$ with residue field $\F_2$. A refined version of Hutchinson's main theorem \cite[Theorem 6.10]{h2016} states that:

\begin{introthm}[\ref{Main2}] \label{main2-intro}
   Given a global field $K$, there exists a finite set of primes $S_0 \supseteq \mathcal{S}_2$, such that, for all $S \supseteq S_0$, there is a short exact sequence
    \[ 0 \rightarrow H_2(\SL_2(\OO_{K,S}), \z) \rightarrow H_2(\SL_2(K), \z) \rightarrow \bigoplus_{\ppp \notin S} \kappa(\ppp)^{\times} \rightarrow 0.\]
\end{introthm}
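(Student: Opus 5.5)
The plan is to deduce the statement from the exact sequence announced in the abstract, which relates $H_2(\SL_2(\OO_{K,S}),\z)$ to $H_2(\SL_2(\OO_{K,T}),\z)$ for $T\supseteq S$, and then to pass to the limit over $T$.

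\textbf{Step 1: one prime at a time.} Let $T=S\cup\{\ppp\}$ with $\ppp\notin S$. Following Hutchinson \cite{h2016}, I would study $H_2(\SL_2(\OO_{K,T}),\z)$ through its action on the Bruhat--Tits tree at $\ppp$. Localizing at $\ppp$ expresses $\SL_2(\OO_{K,T})$ as an amalgam $\SL_2(\OO_{K,S})*_{\Gg_0(\ppp)}\SL_2(\OO_{K,S})$, where $\Gg_0(\ppp)$ is the congruence subgroup of level $\ppp$ (for instance by Serre's theorem on $S$-arithmetic groups acting on trees). The Mayer--Vietoris sequence of this amalgam, together with a comparison with the corresponding sequence for the tame symbol, should yield an exact sequence
\[ 0\arr H_2(\SL_2(\OO_{K,S}),\z)\arr H_2(\SL_2(\OO_{K,T}),\z)\arr \kappa(\ppp)^\times\arr 0, \]
in which the third map is the restriction of the tame symbol $H_2(\SL_2(K),\z)\arr K_2(K)\arr\kappa(\ppp)^\times$.

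To get injectivity on the left and the correct cokernel, two pieces of information are needed.
\begin{itemize}
\item The low-degree homology of $\Gg_0(\ppp)$ and of $\SL_2(\OO_{K,S})$ must be controlled. In particular, $H_1(\SL_2(\OO_{K,S}),\z)$ and $H_1(\Gg_0(\ppp),\z)$ should be governed by units modulo squares and by elementary generation, using \cite{BBT2025-2}.
\item The residue field $\kappa(\ppp)$ must not be $\F_2$, since $\SL_2(\F_2)$ is not perfect and the Steinberg-type relations break down there. This is why $S_0\supseteq\mathcal{S}_2$.
\end{itemize}
The set $S_0$ is then enlarged by finitely many further primes so that $\OO_{K,S}$ has enough units and $\SL_2(\OO_{K,S})=\GE_2(\OO_{K,S})$ with the needed universal-type presentation. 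With these hypotheses the connecting maps vanish.

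\textbf{Step 2: finitely many primes.} Iterating Step 1 over finite $T\supseteq S$ gives exact sequences
\[ 0\arr H_2(\SL_2(\OO_{K,S}),\z)\arr H_2(\SL_2(\OO_{K,T}),\z)\arr\bigoplus_{\ppp\in T\setminus S}\kappa(\ppp)^\times\arr0. \]
I would check compatibility of the residue maps by naturality of the tame symbols.

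\textbf{Step 3: the limit.} Take the direct limit over finite $T$. Homology commutes with filtered colimits, and $\SL_2(K)=\varinjlim_T\SL_2(\OO_{K,T})$. Since direct limits are exact, this gives the sequence in the statement.

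\textbf{Main obstacle.} The hard part is Step 1, specifically showing that the connecting map from $H_2$ of the amalgam into $H_1(\Gg_0(\ppp),\z)$ vanishes and that the image is exactly $\kappa(\ppp)^\times$. That is where the conditions defining $S_0$ are used. I would first try to import Hutchinson's computation \cite[Theorem 6.10]{h2016}, refining his hypotheses to a finite exceptional set containing $\mathcal{S}_2$.
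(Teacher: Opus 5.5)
There is a genuine gap, and it is the left-hand $0$ in Step 1: the injectivity of $j_{1\ast}\colon H_2(\SL_2(\OO_{K,S}),\z)\arr H_2(\SL_2(\OO_{K,T}),\z)$ for $T=S\cup\{\ppp\}$.

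From the Mayer--Vietoris sequence you get $\ker j_{1\ast}=i_{1\ast}(\ker i_{2\ast})$. Nothing you list (enough units, $\SL_2=\GE_2$, $\kappa(\ppp)\neq\F_2$) makes this vanish. Hypotheses of that kind give only right exactness: surjectivity of $i_{1\ast}$ (Proposition~\ref{surj}) and the computation of $H_1(\Gg_0(\OO_{K,S},\ppp),\z)$, i.e.\ Lemma~\ref{OK,S-OKS,P}. Iterating and taking colimits then yields only Corollary~\ref{OKS_OKK}, with no $0$ on the left.

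Your description of the mechanism is also off. The connecting map $H_2(\SL_2(\OO_{K,T}),\z)\arr H_1(\Gg_0(\OO_{K,S},\ppp),\z)$ must not vanish, since its image $\ker\alpha_1$ is exactly the $\kappa(\ppp)^\times$ you want as cokernel. Falling back on \cite[Theorem 6.10]{h2016} is circular: that theorem is the statement itself. The paper simply quotes it, together with $\tilde K_2(2,\OO_{K,S})=\ker T_S$ and $H_2(\SL_2(K),\z)\cong K_2(2,K)$. Once you have it, Steps 1--3 are unnecessary.

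The missing idea is that injectivity is obtained only for large $S$, non-constructively, from finite generation. The steps are as follows.
\begin{enumerate}
\item Fix a finite set of primes $S_1\supseteq\mathcal{S}_2$, enlarged if necessary so that $H_2(\SL_2(\OO_{K,S_1}),\z)$ is finitely generated. Let $N_S$ be the kernel of $H_2(\SL_2(\OO_{K,S}),\z)\arr H_2(\SL_2(K),\z)$. Then $N_{S_1}$ is finitely generated.
\item Since $H_2(\SL_2(K),\z)=\varinjlim_T H_2(\SL_2(\OO_{K,T}),\z)$, the group $N_{S_1}$ maps to zero in $H_2(\SL_2(\OO_{K,S_0}),\z)$ for some finite $S_0\supseteq S_1$.
\item Let $S\supseteq S_0$ be finite and $x\in N_S$. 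The maps $\delta_\ppp$ are restrictions of the tame symbols on $H_2(\SL_2(K),\z)\cong K_2(2,K)$; this is the compatibility you mention, and it is genuinely needed here. So $x$ maps to $0$ in $\bigoplus_{\ppp\in S\setminus S_1}\kappa(\ppp)^\times$.
\item By Theorem~\ref{Main}, $x$ is the image of some $y\in H_2(\SL_2(\OO_{K,S_1}),\z)$, and necessarily $y\in N_{S_1}$.
\item The map $H_2(\SL_2(\OO_{K,S_1}),\z)\arr H_2(\SL_2(\OO_{K,S}),\z)$ factors through $H_2(\SL_2(\OO_{K,S_0}),\z)$, where $y$ dies. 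Hence $x=0$, so $N_S=0$, and Corollary~\ref{OKS_OKK} gives the short exact sequence.
\end{enumerate}
This is why the statement only asserts that some $S_0$ exists, rather than describing it by local conditions as your Step 1 tries to do.
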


Arithmetic groups, like $\SL_2$ over $S$-integers, enjoy the pleasant property of possessing finitely generated abelian homology groups, and Theorem \ref{main2-intro} gives information of the free and torsion part of $H_2(\SL_2(\OO_{K,S}), \z)$ in terms of the number of real embeddings of $K$ and the group $K_2(\OO_{K,S})$. 

\begin{introthm} [\ref{torsionH_2}]\label{torsionH2-intro}
    Given a global field $K$, with $r$ real embeddings, there exists a finite set of primes $S_0 \supseteq \mathcal{S}_2$ such that, for all $S \supseteq S_0$,
    \begin{enumerate}
        \item[(i)] $\operatorname{rk}(H_2(\SL_2(\OO_{K,S})), \z)=r$;
        \item[(ii)] $|H_2(\SL_2(\OO_{K,S}), \z)_{\operatorname{tor}}| = |K_2(\OO_{K,S})|/{2^r}$.
    \end{enumerate}
\end{introthm}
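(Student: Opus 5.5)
The plan is to compare the short exact sequence of Theorem \ref{main2-intro} with the localization sequence in $K$-theory. For $S \supseteq S_0$ we have
\[ 0 \to H_2(\SL_2(\OO_{K,S}),\z) \to H_2(\SL_2(K),\z) \to \bigoplus_{\ppp\notin S}\kappa(\ppp)^\times \to 0. \]
From Quillen--Soulé we also have the exact sequence
\[ 0 \to K_2(\OO_{K,S}) \to K_2(K) \xrightarrow{\partial} \bigoplus_{\ppp\notin S}\kappa(\ppp)^\times \to 0, \]
where $\partial$ is given by the tame symbols. Surjectivity of $\partial$ holds because $\OO_{K,S}$ is a Dedekind domain with $K_1(\kappa(\ppp))=\kappa(\ppp)^\times$ and $K_1(\OO_{K,S})\to K_1(K)$ is injective.

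The key input is the classical description of $H_2(\SL_2(K),\z)$ for a field: by Suslin and Hutchinson there is a natural surjection
\[ H_2(\SL_2(K),\z) \cong K_2^{MW}(K) \to K_2^M(K)=K_2(K), \]
whose kernel is $I^3(K)$, the cube of the fundamental ideal of the Witt ring. For a global field, $I^3(K)\cong \z^r$ via the signatures at the real places, since $I^3$ is torsion free and detected by real embeddings. The residue map in Theorem \ref{main2-intro} should be compatible with the tame symbol under this identification; I will check this from the construction in \cite{h2016}. Granting it, the snake lemma applied to the two sequences, which have the same right-hand term, yields
\[ 0 \to I^3(K) \to H_2(\SL_2(\OO_{K,S}),\z) \to K_2(\OO_{K,S}) \to 0, \]
with $I^3(K)\cong \z^r$. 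Since $K_2(\OO_{K,S})$ is finite (Garland), part (i) follows at once: the rank is $r$.

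For part (ii) I need the torsion of an extension of the finite group $K_2(\OO_{K,S})$ by $\z^r$. The torsion subgroup injects into $K_2(\OO_{K,S})$, and its image is exactly the set of elements whose lift has finite order. I will use the real-place structure. The regular symbols $K_2(\OO_{K,S})\to \bigoplus_{\text{real}}\{\pm1\}$ are surjective (this is where $S$ large is used, by Tate/Moore reciprocity). The lift of a symbol $\{-1,-1\}$ at a real place corresponds in $K_2^{MW}$ to $[-1][-1]$, and since $2[-1][-1]\cdot$ maps to a generator of $I^3$ at that place, it has infinite order. Concretely, consider the pullback square with $K_2^{MW}(\mathbb R)\to K_2(\mathbb R)$, where $K_2^{MW}(\mathbb R)\cong \z\oplus$ (divisible part) and $\{-1,-1\}$ lifts to a generator of $\z$ containing $I^3(\mathbb R)\cong 2\z$ with index 2. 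From this I expect the torsion to map isomorphically onto the kernel of $K_2(\OO_{K,S})\to \{\pm1\}^r$. That kernel is the wild kernel part, of order $|K_2(\OO_{K,S})|/2^r$.

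The main obstacle is this last step. I must show two things: an element has finite order exactly when its image has trivial real signs, and the extension class is non-split at each real place. I would prove both by mapping everything to $\prod_{v\text{ real}}$ of the corresponding sequence $0\to I^3(\mathbb R)\to K_2^{MW}(\mathbb R)\to K_2(\mathbb R)\to 0$ and using $2\z\subset\z$. A secondary point needing care is the compatibility of the residue maps, which I will try to read off from Hutchinson's construction in Theorem \ref{Main2}.
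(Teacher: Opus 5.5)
Your argument is correct, but it takes a different route from the paper. The paper's proof is purely by citation. It takes $S_0$ from Theorem~\ref{Main2} in the form $H_2(\SL_2(\OO_{K,S}),\z)\simeq\tilde K_2(2,\OO_{K,S})$. It then quotes Hutchinson's splitting $\tilde K_2(2,\OO_{K,S})\simeq K_2(\OO_{K,S})_+\oplus\z^r$ (Lemma~\ref{K2tilde}) and counts with the sequence (\ref{seq-k2+}).

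You instead build the extension $0\to I^3(K)\to H_2(\SL_2(\OO_{K,S}),\z)\to K_2(\OO_{K,S})\to 0$ with the snake lemma, and find the torsion by mapping to the real completions. In effect you reprove Lemma~\ref{K2tilde}. Your route explains the numbers: the $\z^r$ comes from the canonical subgroup $I^3(K)$, and the factor $2^r$ measures exactly how far this extension is from splitting at the real places. The price is several inputs that the citation hides, and your last step needs them stated precisely:

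(a) The signature map $I^3(K)\to\prod_\sigma I^3(K_\sigma)=\prod_\sigma 8\z$ must be an isomorphism, not merely injective. Surjectivity follows, e.g., from $\langle\langle -1,-1,a\rangle\rangle$ with $a$ negative only at $\sigma$. It is what makes the converse direction work: any lift of $y\in K_2(\OO_{K,S})_+$ lands in $I^3(K_\sigma)$ at each real place, so subtracting a suitable element of $I^3(K)$ gives a lift vanishing at all real places. That lift is then torsion by injectivity.

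(b) You need $K_2^{MW}(\mathbb R)\cong 4\z\oplus D$ with $D$ uniquely divisible. This makes $K_2^{MW}(\mathbb R)$ torsion-free, so torsion elements vanish at every real place and hence have trivial signs.

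(c) You need the compatibility of the residue maps with the tame symbols. This is automatic if you quote Theorem~\ref{Main2} as $H_2\simeq\tilde K_2(2,\OO_{K,S})=\ker T_S$, rather than as a bare short exact sequence.

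Two small corrections. First, surjectivity of the real symbols already holds on $K_2(\OO_K)$ (the paper cites Keune), so that is not where $S\supseteq S_0$ enters; the hypothesis is used only in Theorem~\ref{Main2}. Second, the kernel of the sign map is the positive tame kernel $K_2(\OO_{K,S})_+$, not a wild kernel.
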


 With the above theorem, we were able to connect the data about the structure of $H_2(\SL_2(\OO_{K,S}), \z)$ with the special value of the Dedekind zeta function $\zeta_K^S(-1)$, by means of the Birch-Tate formula for totally real abelian extensions.

\begin{introthm}[\ref{BThomthm}]
 Let $\OO_{K,S}$ be a ring of $S$-integers of a totally real abelian extension $K$, then there is $S_0 \supseteq \mathcal{S}_2$, such that for all $S \supseteq S_0$, we have, up to a power of $2$,
    \[
    |\zeta_K^S(-1)|=\frac{2^r \cdot |H_2(\SL_2(\OO_{K,S}), \z)_{\operatorname{tor}}|}{|w_2(K)|},
    \]
    where $r=[K: \q]$.  
\end{introthm}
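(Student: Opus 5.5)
Combine Theorem \ref{torsionH2-intro} with the Birch--Tate formula for totally real abelian fields. The latter is known in the abelian case via the main conjecture of Iwasawa theory (Mazur--Wiles, together with the $2$-part results). Since the statement only holds up to a power of $2$, we do not need the full $2$-part.

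\textbf{Step 1 (Birch--Tate for $\OO_K$).} For $K$ totally real abelian, Birch--Tate gives
\[
|\zeta_K(-1)| = \frac{|K_2(\OO_K)|}{w_2(K)},
\]
at least up to a power of $2$.

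\textbf{Step 2 (pass to $S$-integers).} Here $\zeta_K^S(s)=\zeta_K(s)\prod_{\ppp\in S}(1-N\ppp^{-s})$, so
\[
\zeta_K^S(-1)=\zeta_K(-1)\prod_{\ppp\in S}(1-N\ppp).
\]
On the $K$-theory side, Quillen's localization sequence (with $K_1(\OO_K)\to \bigoplus_{\ppp\in S} K_0(\kappa(\ppp))$ having finite cokernel but no contribution in degree $2$) gives
\[
0\to K_2(\OO_K)\to K_2(\OO_{K,S})\to \bigoplus_{\ppp\in S}\kappa(\ppp)^\times\to 0 .
\]
Surjectivity holds because $K_1(\OO_K)\to K_0$ of the residue fields is zero on the torsion-free $K_0(\kappa(\ppp))=\z$, which is what matters: the next map is $K_1(\OO_{K,S})\to\bigoplus\z$. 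Hence
\[
|K_2(\OO_{K,S})|=|K_2(\OO_K)|\prod_{\ppp\in S}(N\ppp-1),
\]
and therefore $|\zeta_K^S(-1)| = |K_2(\OO_{K,S})|/w_2(K)$ up to a power of $2$. Alternatively one can cite the $S$-version of Birch--Tate directly.

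\textbf{Step 3 (apply Theorem \ref{torsionH2-intro}(ii)).} Take $S_0\supseteq \mathcal{S}_2$ as in that theorem. Since $K$ is totally real, $r=[K:\q]$ real embeddings. Then for $S\supseteq S_0$ we have $|K_2(\OO_{K,S})| = 2^r\,|H_2(\SL_2(\OO_{K,S}),\z)_{\rm tor}|$. Substituting into Step 2 gives the formula. The hypotheses of the standing assumptions (not totally imaginary, infinitely many units) hold for $S\supseteq S_0$ nonempty or for $K\neq\q$. If needed, enlarge $S_0$ to guarantee infinitely many units.

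\textbf{Main obstacle.} The only delicate point is the exactness and surjectivity in the localization sequence, and the precise status of Birch--Tate at the prime $2$. This is exactly why the statement is phrased up to a power of $2$. I would cite the known Birch--Tate results for abelian totally real fields (odd part by Mazur--Wiles; $2$-part by Kolster and others) and not reprove them.
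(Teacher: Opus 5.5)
Your argument is correct and follows the paper's route. The paper gets Theorem~\ref{BThomthm} by rewriting the Birch--Tate formula (Theorem~\ref{BTT}, implicitly in its $S$-version) using Theorem~\ref{torsionH_2}(ii). Your Step~2 makes explicit the passage from $\OO_K$ to $\OO_{K,S}$ that the paper leaves implicit; note that the Euler product there should run over the finite primes of $S$ only.

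The one step whose justification is wrong as written is the surjectivity of $K_2(\OO_{K,S})\to\bigoplus_{\ppp\in S}\kappa(\ppp)^\times$. There is no map $K_1(\OO_K)\to\bigoplus K_0(\kappa(\ppp))$ in the localization sequence, and the valuation map $K_1(\OO_{K,S})\to\bigoplus_{\ppp\in S}\z$ plays no role in degree $2$.

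What you actually need is that the boundary $\bigoplus_{\ppp\in S}K_1(\kappa(\ppp))\to K_1(\OO_K)$ is zero, i.e.\ that $K_1(\OO_K)\to K_1(\OO_{K,S})$ is injective. This holds because $K_1(\OO_K)=\OO_K^\times$, since $SK_1(\OO_K)=0$ by Bass--Milnor--Serre. Composing with the determinant $K_1(\OO_{K,S})\to\OO_{K,S}^\times$ then gives the inclusion $\OO_K^\times\subseteq\OO_{K,S}^\times$.

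Alternatively, apply the snake lemma to the two tame-kernel sequences already displayed in the paper's subsection on $\tilde{K}_2(2,\OO_{K,S})$. This gives both injectivity of $K_2(\OO_K)\to K_2(\OO_{K,S})$ and $\coker(K_2(\OO_K)\to K_2(\OO_{K,S}))\simeq\bigoplus_{\ppp\in S}\kappa(\ppp)^\times$ at once.
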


For non-abelian extensions, Birch-Tate formula is still a conjecture and we can also give a homological version of this conjecture.

\begin{introconj}[\ref{BThomconj}]
    Let $\OO_{K,S}$ be a ring of $S$-integers of a totally real extension $K$, then there is an $S_0 \supseteq \mathcal{S}_2$, such that for all $S \supseteq S_0$, we have, up to a power of $2$:
    \[
    |\zeta_K^S(-1)|=\frac{2^r \cdot |H_2(\SL_2(\OO_{K,S}), \z)_{\operatorname{tor}}|}{|w_2(K)|},
    \]
    where $r=[K: \q]$.  
\end{introconj}

This suggests the structure of $H_2(\SL_2(\OO_{K,S}), \z)$ can be understood by looking to special values of Dedekind $L$-functions. 

Theorems \ref{main2-intro} and \ref{torsionH2-intro} are a direct consequence of the main result of this paper, generalizing a cornerstone of Hutchinson's main theorem: the exact sequence of \cite[Theorem 5.17]{h2016}, which describes how localizing more primes affects the second homology of $\SL_2(\OO_{K,S})$, provided that there exists a unit $\lambda \in \OO_{K,S}^{\times}$ such that $\lambda^2 - 1$ is also a unit. Note that if $\OO_{K,S}$ has a prime ideal $\ppp$ with $|\OO_{K,S}/\ppp| =2$, then it is not possible to have this condition on the units, as it would imply $\lambda$ and $\lambda^2 - 1$ are congruent to $1$ $(\operatorname{mod} \ppp)$, and this yields $0 = 1$ $(\operatorname{mod} \ppp)$. Analogously, the same holds for $|\OO_{K,S}/\ppp|=3$. Thus, this condition forces the assumption that $S$ contains all primes with residue field of size $2$ and $3$.

Since it is known that those primes are the ones that determine the abelianization of $\SL_2(\OO_{K,S})$ \cite{BBT2025-2}, Hutchinson's result contemplates only rings of $S$-integers for which $\SL_2(\OO_{K,S})$ is necessarily perfect. Using recent results on the first homology of congruence subgroups \cite{PIBT} and examining the Mayer-Vietoris exact sequence associated to an amalgamated decomposition of $\SL_2(\OO_{K,S})$, we generalized Hutchinson's exact sequence, proving that we only need to assume that the primes with residue field $\F_{2}$ are contained in $S$ (such condition is equivalent to $2 \nmid |H_1(\SL_2(\OO_{K,S}), \z)|$, by Remark \ref{2inS}).

\begin{introthm}[\ref{Main}]
    If $2 \nmid |H_1(\SL_2(\OO_{K,S}), \z)|$ and $T$ is a set of primes of $K$ such that $T \supsetneq S $, then we have the following exact sequence: 
    \[
    H_2(\SL_2(\mathcal{O}_{K,S}), \z) \rightarrow H_2(\SL_2(\mathcal{O}_{K, T}), \z) \rightarrow \bigoplus_{\ppp \in T\backslash S} \kappa(\ppp)^{\times} \rightarrow 0.
    \]    
\end{introthm}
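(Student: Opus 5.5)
The plan is to reduce to adding one prime at a time and then follow Hutchinson's strategy from \cite[Theorem 5.17]{h2016}, replacing his hypothesis on special units by information on congruence subgroups taken from \cite{PIBT}.

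\textbf{Reduction to one prime.} If $T\setminus S$ is finite, I would argue by induction on $|T\setminus S|$. Adding primes to $S$ keeps every prime with residue field $\F_2$ inside the set, so by Remark \ref{2inS} the hypothesis $2\nmid |H_1(\SL_2(\OO_{K,S'}),\z)|$ holds at every intermediate stage $S\subseteq S'\subseteq T$. Exact sequences of the form $A\to B\to C\to 0$ compose well along a chain of inclusions. The key point is that the map $H_2(\SL_2(\OO_{K,S'}),\z)\to \kappa(\ppp)^\times$ should be a tame-symbol-type residue map. Such a map is natural, and it kills the image coming from rings in which $\ppp$ is not inverted. For infinite $T$, I would pass to the direct limit, since homology commutes with filtered colimits and direct limits are exact.

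\textbf{One prime $T=S\cup\{\ppp\}$.} Here I would use the Serre amalgam
\[
\SL_2(\OO_{K,T})\cong \SL_2(\OO_{K,S}) *_{\Gamma_0(\ppp)} \SL_2(\OO_{K,S}),
\]
where $\Gamma_0(\ppp)$ is the Hecke congruence subgroup. This comes from the action on the Bruhat--Tits tree at $\ppp$; the two factors are the stabilisers of adjacent vertices, the second being conjugate to the first by $\mathrm{diag}(\pi,1)$. The Mayer--Vietoris sequence then reads
\[
H_2(\SL_2(\OO_{K,S}))^{2}\to H_2(\SL_2(\OO_{K,T}))\to H_1(\Gamma_0(\ppp))\xrightarrow{\ \alpha\ } H_1(\SL_2(\OO_{K,S}))^{2}.
\]
So it suffices to prove two things:
\begin{itemize}
\item[(a)] $\ker\alpha\cong \kappa(\ppp)^\times$;
\item[(b)] the two copies of $H_2(\SL_2(\OO_{K,S}))$ have the same image in $H_2(\SL_2(\OO_{K,T}))$.
\end{itemize}
Claim (b) holds because the two embeddings differ by conjugation by $\mathrm{diag}(\pi,1)\in \GL_2(\OO_{K,T})$. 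This conjugation acts on $H_2(\SL_2(\OO_{K,T}))$ through the action of $\OO_{K,T}^\times/(\OO_{K,T}^\times)^2$. That action has to be shown trivial on the image, or else absorbed by the first map. I would follow Hutchinson's argument here: the image consists of symbols on which the action is trivial. If that is difficult, the fallback is to observe that the $\mathrm{diag}(\pi,1)$-conjugate map can be composed with the original and to check the resulting relation directly on generators.

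\textbf{Main obstacle: step (a).} We have $\Gamma_0(\ppp)\to B(\kappa(\ppp))$, the Borel subgroup of the residue field. Composing with the diagonal character gives a surjection $H_1(\Gamma_0(\ppp))\to\kappa(\ppp)^\times$. Using the description of $H_1(\Gamma_0(\ppp),\z)$ from \cite{PIBT}, I would write
\[
H_1(\Gamma_0(\ppp))\cong \kappa(\ppp)^\times\oplus(\text{a part mapping isomorphically onto its image in } H_1(\SL_2(\OO_{K,S}))),
\]
where the second summand is, up to $3$-parts coming from residue fields $\F_3$, the pullback of $H_1(\SL_2(\OO_{K,S}))$. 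The role of the hypothesis is to rule out $2$-torsion contributions from $\F_2$-primes, which would otherwise destroy the splitting. One must then check two things: that the diagonal character vanishes on images of $H_1(\SL_2)$ classes, and that $\alpha$ is injective on the complementary part. The second check is where I expect most of the work to lie. The first would use that $\mathrm{diag}(u,u^{-1})$, for $u$ a generator of $\kappa(\ppp)^\times$, lifts to a commutator-like element in $\SL_2(\OO_{K,S})$ but not in $\Gamma_0(\ppp)$. Once (a) and (b) are in place, the Mayer--Vietoris sequence gives the exact sequence in the statement for a single prime, and the reduction above completes the proof.
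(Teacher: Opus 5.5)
Your architecture matches the paper's. You induct on $|T\setminus S|$; the paper does this through a snake-lemma diagram and relies on the same compatibility of the maps $\delta_\ppp$ that you flag. For a single prime you use Theorem \ref{amalg} and Mayer--Vietoris, which reduces everything to your (a) and (b).

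The genuine gap is (b), which is where the paper's real work lies, and your argument for it does not go through. Since $j_2=c_g\circ j_1$ with $g=\mathrm{diag}(\pi^{-1},1)$, you have $j_{2*}=\langle\pi\rangle\cdot j_{1*}$ for the action of $\OO_{K,T}^\times/(\OO_{K,T}^\times)^2$. The class $\langle\pi\rangle$ is nontrivial because $v_\ppp(\pi)=1$. So you need $\langle\pi\rangle\cdot\im j_{1*}\subseteq\im j_{1*}$, which is just (b) restated, and you offer no mechanism for it. No description of $H_2(\SL_2(\OO_{K,S}),\z)$ by symbols is available in general.

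Moreover, triviality of the action on the image can fail. Take $\OO_{K,S}=\z[1/2]$, $\ppp=(p)$ and the admissible uniformizer $\pi=-p$. The image of $H_2(\SL_2(\z[1/2]),\z)\cong\z$ in $K_2(2,\q)\cong K_2^{MW}(\q)$ is generated by $c(-1,-1)$. The element $\langle -p\rangle$ acts on it by $-1$: it reverses the real signature of the $I^2$-component and fixes $\{-1,-1\}\in K_2(\q)$. By equivariance, $\langle\pi\rangle$ is therefore nontrivial on $\im j_{1*}$. This is also not what Hutchinson's argument does; the paper adapts that argument in Proposition \ref{surj}.

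The missing idea is exactly that proposition. For $|\kappa(\ppp)|\neq 2$, both maps $i_{1*},i_{2*}\colon H_2(\Gamma_0(A,\ppp),\z)\to H_2(\SL_2(A),\z)$ are surjective. The proof compares the Lyndon--Hochschild--Serre spectral sequences of $\Gamma(A,\ppp)\to\Gamma_0(A,\ppp)\to B(\kappa(\ppp))$ and $\Gamma(A,\ppp)\to\SL_2(A)\to\SL_2(\kappa(\ppp))$. Lemma \ref{pgroupiso} matches the relevant $E^\infty$-terms, and hence the quotients $\Lambda$ of Lemma \ref{Lambda(G)}. Given this, $\beta_2\alpha_2=0$ yields $\im j_{2*}=\im(j_{2*}i_{2*})=\im(j_{1*}i_{1*})\subseteq\im j_{1*}$ at once, with no unit action involved.

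For (a), you have placed the difficulty in the wrong spot. Injectivity of $\alpha$ on a complement that maps isomorphically into $H_1(\SL_2(A),\z)$ is automatic. What needs proof is that the $\kappa(\ppp)^\times$-part dies under both $i_{1*}$ and $i_{2*}$. The paper gets this from Theorem \ref{Gg_0Glob}, together with exactness at $H_1(\SL_2(A),\z)^2$ and the known $H_1(\SL_2(\OO_{K,T}),\z)$ from Propositions \ref{H1CHAR0} and \ref{H1CHAR>0}. These fix the order of $\im\alpha_1$, and hence of $\ker\alpha_1$.

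Finally, the hypothesis $2\nmid|H_1(\SL_2(\OO_{K,S}),\z)|$ is not there to stop $2$-torsion from spoiling a splitting. Its only role is to force $|\kappa(\ppp)|\ge 3$ for every $\ppp\in T\setminus S$. That is exactly what Theorem \ref{Gg_0Glob} and, crucially, Proposition \ref{surj} require.
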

\begin{introcor}[\ref{OKS_OKK}]
   If $2 \nmid |H_1(\SL_2(\OO_{K,S}), \z)|$, then there is a natural exact sequence: 
    \[
    H_2(\SL_2(\mathcal{O}_{K,S}),\z) \rightarrow H_2(\SL_2(K),\z) \rightarrow \bigoplus_{\ppp \notin S} \kappa(\ppp)^{\times} \rightarrow 0.
    \]
\end{introcor}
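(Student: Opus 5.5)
The plan is to deduce the corollary from Theorem~\ref{Main} by a direct limit argument. Since $K$ is the union of the rings $\OO_{K,T}$ over finite sets of primes $T \supseteq S$ (filtered by inclusion), $\SL_2(K) = \varinjlim_T \SL_2(\OO_{K,T})$. Integral homology commutes with filtered colimits of groups, so
\[
H_2(\SL_2(K),\z) \cong \varinjlim_{T} H_2(\SL_2(\OO_{K,T}),\z).
\]
Similarly, $\bigoplus_{\ppp \notin S}\kappa(\ppp)^{\times} = \varinjlim_T \bigoplus_{\ppp \in T\setminus S}\kappa(\ppp)^{\times}$, with the transition maps given by the obvious inclusions of direct sums.

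\textbf{Step 1: the maps are compatible.} For each finite $T \supsetneq S$, Theorem~\ref{Main} gives an exact sequence
\[
H_2(\SL_2(\OO_{K,S}),\z) \to H_2(\SL_2(\OO_{K,T}),\z) \to \bigoplus_{\ppp\in T\setminus S}\kappa(\ppp)^{\times}\to 0 .
\]
The hypothesis $2 \nmid |H_1(\SL_2(\OO_{K,S}),\z)|$ concerns only $S$, so it holds for every such $T$. We need these sequences to form a directed system: for $T \subseteq T'$, the square whose horizontal maps are the residue-type maps $H_2(\SL_2(\OO_{K,T}))\to\bigoplus_{T\setminus S}$ and $H_2(\SL_2(\OO_{K,T'}))\to\bigoplus_{T'\setminus S}$ should commute, with the right vertical map the inclusion. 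I would verify this from the construction in the proof of Theorem~\ref{Main}. There the map to $\kappa(\ppp)^\times$ is built ppp-by-ppp, say by iterating one prime at a time via the Mayer--Vietoris sequence of the amalgam $\SL_2(\OO_{K,T\cup\{\ppp\}}) = \SL_2(\OO_{K,T}) *_{\Gamma_0(\ppp)} \SL_2(\OO_{K,T})^{g}$. In that construction the $\ppp$-component depends only on localizing at $\ppp$. Equivalently, it factors through a tame-symbol-type map $H_2(\SL_2(K))\to\kappa(\ppp)^\times$, which is independent of $T$. This naturality check is the main obstacle. If the construction in the paper is not manifestly natural, I would first try to identify the $\ppp$-component with the composite $H_2(\SL_2(\OO_{K,T}))\to H_2(\SL_2(K)) \to K_2(K) \xrightarrow{\partial_\ppp} \kappa(\ppp)^\times$, which is evidently compatible with enlarging $T$.

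\textbf{Step 2: pass to the colimit.} Filtered colimits of abelian groups are exact, and the left term is the constant system $H_2(\SL_2(\OO_{K,S}),\z)$. Taking $\varinjlim_T$ of the compatible exact sequences therefore yields
\[
H_2(\SL_2(\OO_{K,S}),\z) \to H_2(\SL_2(K),\z) \to \bigoplus_{\ppp\notin S}\kappa(\ppp)^\times \to 0,
\]
which is exact. The first map is induced by the inclusion $\OO_{K,S}\subset K$, so the sequence is natural. If $K$ has only finitely many primes outside $S$ (which cannot happen for a global field), the statement would follow directly from Theorem~\ref{Main}; in all actual cases the colimit argument applies.
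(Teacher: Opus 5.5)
Your proposal is correct and is essentially the paper's proof: the paper also passes to the colimit over $T \supseteq S$ of the exact sequences of Theorem~\ref{Main}, using $K=\bigcup_T \OO_{K,T}$ and right exactness of the colimit (your appeal to exactness of filtered colimits works equally well). The paper does not spell out the compatibility of the maps $\delta_T$ under enlarging $T$, which you rightly flag; it relies implicitly on the remark that $\delta_\ppp$ is essentially the tame symbol, which is exactly the identification you propose.
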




\textbf{Acknowledgments.} We thank Behrooz Mirzaii for his helpful guidance, comments and discussions about this article. The contributions of the first, second and fourth authors to this work were made possible by CAPES (Coordena\c{c}\~ao de Aperfeiçoamento de Pessoal de N\'ivel Superior) fellowships 
(grant numbers 88887.136061/2025-00, 88887.995616/2024-00 and 88887.955905/2024-00). The third author is supported by a research grant (VIL54509) from VILLUM FONDEN. The first author was a short-term visiting scholar at the Beijing Institute of Mathematical Sciences and Applications during the production of this article, supported by the China Latin-America Mathematics Center. 

\section{The abelianization of congruence subgroups of \texorpdfstring{$\SL_2$}{Lg} over \texorpdfstring{$S$}{Lg}-integers} \label{sec1}

This section is dedicated to presenting previous results on the abelianization of congruence subgroups of $\SL_2$ over $S$-integers. For a detailed discussion of $S$-integers, congruence subgroups, and $\GE_2$-rings, we refer the reader to \cite[\S 1, \S 2]{BBT2025-2} and \cite[\S 1]{PIBT}.

A global field $K$ is either a finite field extension of $\q$ (an algebraic number field) or a finite field extension of $\F_q(t)$. If $\char (K) = 0$ (resp. $\char (K) > 0$), then the ring of algebraic integers of $K$, denoted by $\OO_K$, is the subring of all elements of $K$ that are integral over $\z$ (resp. over $\F_q[t]$). It is well established in the literature that $\OO_K$ is a Dedekind domain \cite[Theorem 6.24]{keune2023}. 

\begin{dfn}
Let $K$ be a global field and let $S$ be a finite nonempty set of primes containing the set $S_\infty$ of all infinite primes. We define:
$$
\mathcal{O}_{K,S}=\{a \in K^{\times} \text{ }| \text{ }v_{\ppp}(a)\geq 0 \text{ for all } 
\ppp \notin S \}. 
$$
A ring $A$ is said to be a \textit{Dedekind domain of arithmetic type}, or a \textit{ring of $S$-integers}, whenever $A=\OO_{K,S}$. 
If $S = S_{\infty}$ and every prime in $S$ comes from an imaginary embedding $K \hookrightarrow \mathbb{C}$, we say that $A$ is \textit{totally imaginary}.
\end{dfn} 

Henceforth in the present work, we consider that $\mathcal{O}_{K,S}$ has infinitely many units and is not totally imaginary, unless otherwise stated. These conditions are used to ensure that $\SL_2(\OO_{K,S})$ has the Congruence Subgroup Property \cite{serre1970} and is needed for many of the previous results available in the literature.

 A well-known fact from number theory is that every nonzero prime ideal $\ppp \in \Spec(\OO_{K,S})$ is maximal (see \cite{keune2023}). Hence, $\kappa(\ppp):=A/\ppp$ is a field, the so called residue field of $A$ module $\ppp$. Moreover, if $A=\mathcal{O}_{K,S}$, then the residue field $\kappa(\ppp)$ is finite \cite[Lemma 1.2]{BBT2025-2}.

Now, consider the following set of prime ideals of a ring of integers of a global field $K$:
\[
S_2=\{\ppp\in \Spec(\OO_K): \ppp\mid 2, e_\ppp=1,  [\OO_K/\ppp: \F_2]=1\},
\]
\[
S_2'=\{\ppp\in \Spec(\OO_K): \ppp\mid 2, e_\ppp>1, [\OO_K/\ppp: \F_2]=1\},
\]
\[
\!\!\!\!\!\!\!\!\!\!\!\!\!\!\!\!\!\!
S_3=\{\ppp\in \Spec(\OO_K): \ppp \mid 3, [\OO_K/\ppp: \F_3]=1\}.
\] 

If $K$ is a finite extension of $\F_q(t)$, we may also define
\[
S_2''=\{\ppp\in\Spec(\OO_K): \ppp \mid t-a\ \text{, for some $a\in \F_2$, and}\ [\OO_K/\ppp:\F_2]=1\},
\]
for the case $q = 2$, and
\[
S_3''=\{\ppp\in\Spec(\OO_K): \ppp \mid t-a\ \text{, for some $a\in \F_3$, and}\ [\OO_K/\ppp:\F_3]=1\},
\]
for $q = 3$.

The computation of $H_1(\SL_2(\OO_{K,S}), \z)$ can be achieved in terms of the aforementioned sets of prime ideals, as we can see in the following propositions.

\begin{prp}{\cite[Theorem 3.1]{BBT2025-2}}\label{H1CHAR0}
Suppose $\char(K) = 0$ and $A = \mathcal{O}_{K,S}$. Then
\[
H_1(\SL_2(A), \z) \simeq \bigoplus_{\qqq\in S_2\backslash S} \z/4 \oplus 
\bigoplus_{\qqq\in S_2'\backslash S} (\z/2\oplus \z/2) \oplus
\bigoplus_{\qqq\in S_3\backslash S} \z/3.
\]
\end{prp}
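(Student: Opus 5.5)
Since this is cited from \cite[Theorem 3.1]{BBT2025-2}, the plan follows the strategy there, which runs through the congruence subgroup property. First reduce to congruence quotients. Because $\OO_{K,S}$ has infinitely many units and is not totally imaginary, $\SL_2(A)$ has the Congruence Subgroup Property \cite{serre1970}. In fact the congruence kernel is trivial in this non-totally-imaginary case, so every finite-index normal subgroup contains a principal congruence subgroup $\Gamma(\mathfrak a)$. The abelianization is finite, since $A$ is a $\GE_2$-ring with many units and elementary generators become torsion commutators. Hence the commutator subgroup has finite index and contains some $\Gamma(\mathfrak a)$. Writing $\mathfrak a=\prod \ppp^{n_\ppp}$ and using strong approximation, we get
\[ H_1(\SL_2(A),\z)\simeq H_1(\SL_2(A/\mathfrak a),\z)\simeq \bigoplus_{\ppp\mid\mathfrak a} H_1(\SL_2(A/\ppp^{n_\ppp}),\z),\]
and we may enlarge $n_\ppp$ freely. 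The problem is thereby reduced to computing $H_1(\SL_2(\widehat{A}_\ppp),\z)$, that is, the abelianization of $\SL_2$ over the completed local ring, taken as an inverse limit of the finite levels.

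Second, compute the local abelianizations. For $|\kappa(\ppp)|\ge4$, $\SL_2(\kappa(\ppp))$ is perfect. Each layer $\ker(\SL_2(A/\ppp^{n+1})\to\SL_2(A/\ppp^n))\cong\mathfrak{sl}_2(\kappa(\ppp))$ is an irreducible, nontrivial module with no coinvariants. So the local $H_1$ vanishes, and only primes with $|\kappa(\ppp)|\in\{2,3\}$ not in $S$ contribute.
\begin{itemize}
\item For $\kappa(\ppp)=\F_3$, the quotient $\SL_2(\F_3)$ has abelianization $\z/3$. One checks the first congruence layer, an $\F_3$-adjoint module, contributes no further coinvariants to the abelianization (using $\SL_2(\z_3)^{ab}=\z/3$ as the model). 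This gives $\z/3$.
\item For $\kappa(\ppp)=\F_2$ with $e_\ppp=1$, the completion is $\z_2$. The known computation $\SL_2(\z_2)^{ab}\cong\SL_2(\z/4)^{ab}\cong\z/4$ gives $\z/4$.
\item For $e_\ppp>1$, the second-level quotient $\SL_2(A/\ppp^2)$ with $A/\ppp^2\cong\F_2[\epsilon]/\epsilon^2$ rather than $\z/4$ has abelianization $\z/2\oplus\z/2$. One shows higher levels add nothing.
\end{itemize}

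Third, assemble the pieces to obtain the stated decomposition. This uses the fact that $S$-primes are inverted, so only primes outside $S$ appear.

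The main obstacle is the $2$-adic ramified case. There one must track the commutator structure of the successive congruence layers of $\SL_2(\widehat{A}_\ppp)$ and show they stabilize at level $2$. I would handle this by explicit commutators of elementary matrices $[E_{12}(x),D(u)]$ with units $u\equiv1 \pmod{\ppp}$. Since $\pi^2$ divides $2$ when $e_\ppp\ge2$, these generate all of $\Gamma(\ppp^2)$ modulo commutators.
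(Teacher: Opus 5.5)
The paper gives no argument of its own for this proposition; it is quoted from \cite[Theorem 3.1]{BBT2025-2}. Your global reduction is sound. Finiteness of $H_1$ follows because $x\mapsto[E_{12}(x)]$ is additive and kills the nonzero ideal generated by the $u^2-1$, $u\in A^\times$. Serre's trivial congruence kernel then puts some $\Gamma(\mathfrak a)$ inside the commutator subgroup, and CRT splits $\SL_2(A/\mathfrak a)$ into local factors. The genuine gap is in the step you yourself flag as the main obstacle, the ramified dyadic case.

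For $\ppp\in S_2'$, your commutators $[D(u),E_{12}(x)]=E_{12}((u^2-1)x)$ with $u=1+\pi t$, $t$ a unit, do put $E_{12}(\ppp^2)$ and, by transposition, $E_{21}(\ppp^2)$ into $[G,G]$. This works because $u^2-1=\pi^2t^2+2\pi t$ has valuation exactly $2$ when $e_\ppp\ge 2$. But these elements do not generate $\Gamma(\ppp^2)$, not even together. The matrices $\begin{pmatrix}a&b\\ c&d\end{pmatrix}\in\SL_2$ with $b,c\in\ppp^2$ and $a\equiv d\equiv 1 \pmod{\ppp^4}$ form a subgroup containing all of them. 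So a diagonal element $D(u)$ with $u\in 1+\ppp^2$, $u\notin 1+\ppp^4$, is never reached, and as written ``higher levels add nothing'' is not proved.

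The missing piece is short. In the local ring, the $LDU$ factorization gives $\Gamma(\ppp^2)=E_{21}(\ppp^2)\,D(1+\ppp^2)\,E_{12}(\ppp^2)$. Writing $D(u)=w(u)w(-1)$ with $w(u)=E_{12}(u)E_{21}(-u^{-1})E_{12}(u)$ gives, in $G^{ab}$,
\[
[D(u)]=[E_{12}(2(u-1))]+[E_{21}(1-u^{-1})],
\]
which is $0$ for $u\in 1+\ppp^2$, since both arguments lie in $\ppp^2$. Hence $\Gamma(\ppp^2)\subseteq[G,G]$. The ramified case then reduces to
\[
\SL_2(\F_2[\epsilon]/(\epsilon^2))^{ab}\simeq \mathfrak{sl}_2(\F_2)_{\SL_2(\F_2)}\oplus\z/2\simeq(\z/2)^2 .
\]

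There are also three smaller inaccuracies.

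First, in residue characteristic $2$ the layer $\mathfrak{sl}_2(\kappa)$ contains the scalars, so it is not irreducible. Its coinvariants still vanish when $|\kappa|\ge 4$. It is cleaner, though, to note that $\SL_2(A/\ppp^n)$ is perfect, because $E_{12}(x)=[D(u),E_{12}((u^2-1)^{-1}x)]$ for any $u$ with $\bar u\notin\{0,\pm1\}$.

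Second, for $\kappa(\ppp)=\F_3$ every layer, not just the first, is the irreducible module $\mathfrak{sl}_2(\F_3)$ with zero coinvariants. This handles ramified primes over $3$ directly, with no need for $\SL_2(\z_3)$ as a model.

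Third, when $\kappa(\ppp)=\F_2$ the layers have nonzero coinvariants $\F_2$, so stabilization at level $2$ is never a coinvariant statement. For $\z_2$ the diagonal trick alone yields only $E_{12}(8\z_2)$, since $8\mid u^2-1$ for every $u\in\z_2^\times$. Your appeal to $\SL_2(\z_2)^{ab}\simeq\z/4$ (e.g.\ via $\SL_2(\z)^{ab}\simeq\z/12$ and surjectivity onto $\SL_2(\z/2^n)$) therefore carries real weight and should be justified, not just quoted.
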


\begin{prp}{\cite[Theorem 3.2]{BBT2025-2}}\label{H1CHAR>0}
Suppose $K$ is a finite extension of $\mathbb{F}_q(t)$ and $A = \mathcal{O}_{K,S}$. Then

\[
H_1(\SL_2(A),\z) \simeq \begin{cases}
\bigoplus_{\qqq\in S_2''\backslash S}(\z/2\oplus \z/2) &  \text{if $q=2$}\\
\bigoplus_{\qqq\in S_3''\backslash S} \z/3  & \text{if $q=3$.}\\
0 & \text{if $q\geq 4$}
\end{cases}
\]
\end{prp}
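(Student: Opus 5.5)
The plan is to reduce the abelianization of $\SL_2(A)$ to a quotient ring $A/M$ by a concrete ideal, and then compute $A/M$ prime by prime using the constant field $\F_q$. Since $A=\OO_{K,S}$ has infinitely many units, it is a $\GE_2$-ring (Vaserstein), so $\SL_2(A)=E_2(A)$ is generated by the elementary matrices $E_{12}(x)$, $E_{21}(x)$ with $x\in A$. From the universal relations among elementary matrices in $E_2$, one gets a surjection $A\to H_1(\SL_2(A),\z)$, $x\mapsto [E_{12}(x)]$. Conjugating by $\mathrm{diag}(u,u^{-1})$ gives $[E_{12}(u^2x)]=[E_{12}(x)]$, so the kernel contains the ideal
\[
M=\langle (u^2-1)x : u\in A^\times,\ x\in A\rangle .
\]
The first step is to show that, for $\GE_2$-rings with enough units, the kernel is exactly $M$. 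For this I would use the presentation of $\SL_2$ over such rings by Cohn-type relations, together with the fact that the $E_{21}$-classes are identified with the $E_{12}$-classes via the Weyl element $w$. This gives $H_1(\SL_2(A),\z)\simeq A/M$ as abelian groups.

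Next I compute $M$ in each case. If $q\ge 4$, there is $c\in\F_q^\times\subseteq A^\times$ with $c^2\ne 1$, so $c^2-1$ is a unit, hence $M=A$ and $H_1=0$. If $q\in\{2,3\}$, I use the Chinese remainder theorem to write $A/M\simeq\prod_{\ppp} A/\ppp^{v_\ppp(M)}$, where the product runs over the finitely many primes $\ppp\notin S$ containing $M$. A prime $\ppp\notin S$ contains every $u^2-1$ only if the image of $A^\times$ in $\kappa(\ppp)^\times$ consists of square roots of $1$. For $|\kappa(\ppp)|\ge 4$ this fails: by an approximation argument in the $S$-unit group (Dirichlet's theorem and Chevalley-type density of $S$-units in $\kappa(\ppp)^\times$ when $S$ is not too small), one can find a unit with $u^2\not\equiv 1 \pmod{\ppp}$. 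Thus only primes with $\kappa(\ppp)\cong\F_2$ (for $q=2$) or $\kappa(\ppp)\cong\F_3$ (for $q=3$) can survive. In the function field setting these are exactly the primes in $S_2''$ and $S_3''$.

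Finally I determine the exponents $v_\ppp(M)$. For $\kappa(\ppp)=\F_3$, I aim to show $v_\ppp(M)=1$, giving $A/\ppp\simeq\z/3$. For $\kappa(\ppp)=\F_2$, note that $u\equiv 1 \pmod{\ppp}$ gives $u^2-1=(u-1)^2$ in characteristic $2$, so $v_\ppp(u^2-1)\ge 2$. I aim to show $v_\ppp(M)=2$, so that $A/\ppp^2$, an $\F_2$-algebra of dimension $2$, is $\z/2\oplus\z/2$ additively. The main obstacle is this exponent computation: producing a unit $u$ with $v_\ppp(u^2-1)$ exactly $1$ (respectively $u-1$ of valuation exactly $1$). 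Here I would use the congruence subgroup property and strong approximation for the $S$-unit group: since $A^\times$ has positive rank, its image in $(A/\ppp^2)^\times$ is large enough to contain an element $\not\equiv 1 \pmod{\ppp^2}$, possibly after enlarging the argument using units congruent to $1$ at the other relevant primes. If this density fails for a given $S$, I would fall back on the direct computation in \cite[Theorem 3.2]{BBT2025-2}.
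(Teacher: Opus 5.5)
The paper does not prove this statement; it quotes \cite[Theorem 3.2]{BBT2025-2}. Your argument therefore has to stand on its own. The case $q\geq 4$ is fine. With $\SL_2(A)=E_2(A)$ (Vaserstein), the map $x\mapsto[E_{12}(x)]$ induces a surjection $A/M\to H_1(\SL_2(A),\z)$, and a constant $c\in\F_q^\times$ with $c^2\neq 1$ gives $M=A$.

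For $q\in\{2,3\}$ there is a genuine gap. Your first step identifies $H_1(\SL_2(A),\z)$ with $A/M$, and you then compute $M$ using unit-density claims. Both are false for some rings covered by the proposition. Take $K=\F_2(t)$, the irreducible $f=t^5+t^2+1$, $S=\{\infty,(f)\}$ and $A=\F_2[t,f^{-1}]$, so that $A^\times=\{f^i : i\in\z\}$ is infinite.
\begin{itemize}
\item Since $f-1=t^2(t+1)(t^2+t+1)$, every unit is $\equiv 1$ modulo $t^2$ and modulo $t^2+t+1$.
\item So no unit has $u^2\not\equiv 1$ at the prime $(t^2+t+1)$, whose residue field is $\F_4$, and no unit has $v_{(t)}(u-1)=1$.
\item Explicitly, $M=(f+1)^2A$, and $A/M\simeq \F_2[t]/(t^4)\times\F_2[t]/((t+1)^2)\times\F_2[t]/((t^2+t+1)^2)$ has order $2^{10}$. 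The proposition gives $(\z/2\oplus\z/2)^2$, of order $2^4$.
\item In characteristic $3$ the same happens: $A=\F_3[t,(t^2+1)^{-1}]$ has $M=t^2(t-1)(t+1)A$, so $v_{(t)}(M)=2$ rather than $1$.
\end{itemize}
So $A/M$ is only an upper bound. The reverse inequality you hoped to get from Cohn-type relations is simply false for such $A$. Neither Dirichlet's nor Chevalley's theorem forces $A^\times$ to have large image in $\kappa(\ppp)^\times$ or in $(A/\ppp^2)^\times$. The proposition covers every $S$ with $|S|\geq 2$, including unit groups of rank one. Falling back on \cite{BBT2025-2} at the end is not a proof.

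The missing idea is to pass to a finite quotient via the congruence subgroup property before doing any unit computation: units are plentiful locally, not globally.
\begin{enumerate}
\item A non-torsion unit $u$ gives $0\neq u^2-1\in M$. So $A/M$, and hence $H_1(\SL_2(A),\z)$, is finite, and $[\SL_2(A),\SL_2(A)]$ has finite index.
\item The congruence kernel is trivial here \cite{serre1970}, so this commutator subgroup contains some $\Gamma(A,I)$. Shrinking $I$, we may assume every $\ppp$ with $|\kappa(\ppp)|\leq 3$ divides $I$ to order at least $2$.
\item Since $\SL_2(A)\to\SL_2(A/I)$ is onto, $H_1(\SL_2(A),\z)\simeq\bigoplus_{\ppp\mid I}H_1(\SL_2(A/\ppp^{e_\ppp}),\z)$.
\end{enumerate}
Your $M$-argument now works in the local rings $A/\ppp^{e}\simeq\kappa(\ppp)[s]/(s^{e})$.
\begin{itemize}
\item If $|\kappa(\ppp)|\geq 4$, a lift of $c\in\kappa(\ppp)^\times$ with $c^2\neq 1$ makes $\SL_2$ perfect.
\item If $\kappa(\ppp)=\F_3$, the unit $1+s$ shows $H_1$ is a quotient of $\F_3$. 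The reverse bound comes from the surjection onto $H_1(\SL_2(\F_3),\z)\simeq\z/3$.
\item If $\kappa(\ppp)=\F_2$, the unit $1+s$ shows $H_1$ is a quotient of $\F_2[s]/(s^2)$. The reverse bound comes from the surjection onto $H_1(\SL_2(\F_2[s]/(s^2)),\z)\simeq H_1(\SL_2(\F_2),\z)\oplus\mathfrak{sl}_2(\F_2)_{\SL_2(\F_2)}\simeq\z/2\oplus\z/2$.
\end{itemize}
The lower bounds thus come from these finite quotients, not from a presentation of $\SL_2(A)$.
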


An immediate consequence is the following useful fact.

\begin{cor}
    $2 \mid |H_1(\SL_2(\OO_{K,S}), \z)|$ if and only if there exists some prime $\ppp \mid (2)$ such that $|\kappa(\ppp)|=2$ and $\ppp \notin S$.
\end{cor}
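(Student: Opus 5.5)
The plan is to read the corollary off Propositions \ref{H1CHAR0} and \ref{H1CHAR>0}. In each case $H_1(\SL_2(\OO_{K,S}),\z)$ is a finite direct sum of cyclic groups of orders $4$, $2$ and $3$, so its order is a product of powers of $2$ and $3$. Hence $2$ divides the order exactly when at least one summand of even order ($\z/4$ or $\z/2\oplus\z/2$) actually occurs. I will therefore identify the primes that contribute such summands and check that they are precisely the primes $\ppp\mid(2)$ with $|\kappa(\ppp)|=2$ and $\ppp\notin S$.

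In characteristic $0$ (Proposition \ref{H1CHAR0}), the even-order summands are indexed by $(S_2\cup S_2')\setminus S$. By definition, $S_2\cup S_2'$ is the set of primes $\ppp\mid 2$ with $[\OO_K/\ppp:\F_2]=1$; the ramification index $e_\ppp$ is irrelevant for this union. So $2\mid |H_1|$ if and only if some $\ppp\mid(2)$ with residue field $\F_2$ lies outside $S$. One small point is that the residue field of $\ppp$ in $\OO_{K,S}$ equals $\OO_K/\ppp$ for $\ppp\notin S$, so the condition $|\kappa(\ppp)|=2$ may be read in either ring. The summands $\z/3$ indexed by $S_3$ are odd and play no role.

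In positive characteristic (Proposition \ref{H1CHAR>0}), the case $q=3$ gives only $\z/3$ summands and the case $q\ge 4$ gives $0$. In both cases the order of $H_1$ is odd. On the other side, no prime has residue field $\F_2$, because every residue field contains $\F_q$ with $q\ge3$. So both sides of the equivalence are false. For $q=2$, the even-order summands are indexed by $S_2''\setminus S$. Here I have to interpret "$\ppp\mid(2)$" appropriately, since $2=0$ in $K$. The natural reading, and the one the paper's definition of $S_2''$ encodes, is that the relevant primes are the degree-one primes over $t$ or $t+1$. Every prime with residue field $\F_2$ lies over a degree-one prime of $\F_2[t]$, namely $t$ or $t-1$, so $S_2''$ is exactly the set of finite primes with residue field $\F_2$, and the equivalence follows.

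The only delicate step is this bookkeeping in characteristic $2$: I must confirm that residue field $\F_2$ forces $\ppp$ to divide $t-a$ for some $a\in\F_2$. This holds because the image of $t$ in $\kappa(\ppp)=\F_2$ is some $a$, so $t-a\in\ppp$. I also need to check that the "$\ppp\mid(2)$" condition in the statement is read in this sense. Everything else is immediate.
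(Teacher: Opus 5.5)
Your proposal is correct and matches the paper, which states the corollary as an immediate consequence of Propositions \ref{H1CHAR0} and \ref{H1CHAR>0}. As you say, $H_1$ has even order exactly when a $\z/4$ or $\z/2\oplus\z/2$ summand occurs, that is, when $(S_2\cup S_2')\setminus S$ (respectively $S_2''\setminus S$) is nonempty. Your extra bookkeeping in positive characteristic is sound, and no reinterpretation of ``$\ppp\mid(2)$'' is needed there: in characteristic $2$ one has $(2)=(0)\subseteq\ppp$ literally, and in odd characteristic no residue field is $\F_2$.
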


\begin{rem}
\label{2inS}
    We will be mostly dealing with the case where $2\nmid |H_1(\SL_2(\OO_{K,S}), \z)|$, i.e. when all primes $\ppp$ over $2$, with $|\kappa(\ppp)|=2$, lie on $S$. In view of this, consider the set: 
    \begin{align}
    \label{S_2}
        \mathcal{S}_2 = \begin{cases}
        S_2 \cup S_2', &\text{if $\char(K)=0$}\\
        S_2^{''}, &\text{if $\char(K)>0$}
    \end{cases}.
    \end{align}
    
\end{rem}

With Propositions \ref{H1CHAR0} and \ref{H1CHAR>0} in hand, the Congruence Subgroup Property \cite{serre1970} can be used to determine the abelianization of some congruence subgroups of $\SL_2(\OO_{K,S})$ in terms of the group $H_1(\SL_2(\OO_{K,S}), \z)$, under the same assumption on the number of units (for more details, see \cite{PIBT}). In particular, for the group
\[
\Gamma_0(\OO_{K,S}, \ppp) := \bigg\{\begin{pmatrix}
        a & b \\ c & d
    \end{pmatrix} \in \SL_2(\OO_{KS}) :  c \in \ppp \bigg\}, 
\]
we have:

\begin{thm}{\cite[Theorem 3.6]{PIBT}}\label{Gg_0Glob}
    Let $A=\OO_{K,S}$ and let $(0) \neq \ppp \in \Spec(A)$.
    \begin{itemize}
        \item [(i)] If $|\kappa(\ppp)| \geq 4$, then
        \[
        H_1(\Gg_0(A, \ppp), \z) \simeq \kappa(\ppp)^\times \oplus H_1(\SL_2(A), \mathbb{Z}).
        \]
        \item [(ii)] If $|\kappa(\ppp)|=3$, then
        \[
        H_1(\Gg_0(A, \ppp), \z)\simeq \kappa(\ppp) \oplus \kappa(\ppp)^\times \oplus H_1(\SL_2(A), \z).  
        \]
    \end{itemize}
\end{thm}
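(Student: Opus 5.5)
Theorem \ref{Gg_0Glob} is cited from \cite{PIBT}, so here I only sketch how I would prove it. The plan is to compare $\Gg_0(A,\ppp)$ with $\SL_2(A)$ through the reduction map modulo $\ppp$, using the Congruence Subgroup Property. The latter holds because $A$ has infinitely many units and is not totally imaginary.

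\textbf{Step 1 (reduction to the Borel).} Reduction gives a surjection $\Gg_0(A,\ppp)\two \B(\kappa(\ppp))$, the upper-triangular Borel subgroup of $\SL_2(\kappa(\ppp))$; surjectivity follows from strong approximation. Its kernel is the principal congruence subgroup $\Gg(A,\ppp)$. The abelianization of the Borel is
\[
\B(\kappa(\ppp))^{ab}=\kappa(\ppp)^\times \oplus \kappa(\ppp)/\langle (a^2-1)x\rangle .
\]
The quotient $\kappa(\ppp)/\langle (a^2-1)x\rangle$ vanishes when $|\kappa(\ppp)|\geq 4$, since then some $a$ has $a^2\neq 1$. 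It equals $\kappa(\ppp)$ when $|\kappa(\ppp)|=3$. This accounts for the summands $\kappa(\ppp)^\times$ and, in case (ii), $\kappa(\ppp)$.

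\textbf{Step 2 (the $\SL_2(A)$ summand).} Next I would produce a map $\Gg_0(A,\ppp)^{ab}\to H_1(\SL_2(A),\z)$ induced by inclusion. By Propositions \ref{H1CHAR0} and \ref{H1CHAR>0}, $H_1(\SL_2(A),\z)$ is a sum of local contributions at the small primes $\qqq\neq\ppp$ not in $S$. Those contributions come from the congruence quotients $\SL_2(A/\qqq^k)$. Since $\ppp\neq\qqq$, the Chinese remainder theorem shows that $\Gg_0(A,\ppp)$ still surjects onto each of these local factors. Combining this with Step 1, I get a surjection
\[
\Gg_0(A,\ppp)^{ab}\two \kappa(\ppp)^\times\oplus(\text{$\kappa(\ppp)$ if $|\kappa(\ppp)|=3$})\oplus H_1(\SL_2(A),\z).
\]

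\textbf{Step 3 (injectivity).} This is the main obstacle. By CSP the congruence kernel is central, and in the non-totally-imaginary case it is trivial or finite cyclic of roots of unity. Hence $[\Gg_0,\Gg_0]$ contains a congruence subgroup up to this central kernel, and $\Gg_0^{ab}$ is finite and factors through the profinite completion $\prod_{\qqq\notin S}\Gg_0(\widehat A_\qqq)$. The remaining task is to compute the abelianization of each local factor.

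At $\qqq\neq\ppp$ the local factor is $\SL_2(\widehat A_\qqq)$. Its abelianization is exactly the local piece of $H_1(\SL_2(A),\z)$.

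At $\qqq=\ppp$ the local factor is the Iwahori subgroup. I would show its abelianization is the Borel abelianization by proving that the pro-$p$ radical lies in the commutator subgroup. The tool is commutators of $\mathrm{diag}(a,a^{-1})$ with unipotent elements, which yield $\begin{pmatrix}1&(a^2-1)x\\0&1\end{pmatrix}$. When $|\kappa(\ppp)|\geq4$ one can take $a^2-1$ a unit. The lower unipotents in $\ppp$ are handled similarly, and the diagonal part of level $\ppp$ comes from commutators of upper and lower unipotents.

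For $|\kappa(\ppp)|=3$, the elements with $a^2-1$ a unit are unavailable. I would therefore check directly, via an explicit presentation of the Iwahori subgroup over a $3$-adic-type ring, that the residual $\kappa(\ppp)$ survives and that nothing further survives. The problem of the central congruence kernel contributing extra abelianization is handled by noting that it maps trivially because it lies in commutators, using the Steinberg symbol description.
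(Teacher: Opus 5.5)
Your case (i) is sound and follows the Congruence Subgroup Property route the paper points to via \cite{PIBT}. Case (ii), however, contains a genuine error.

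If $|\kappa(\ppp)|=3$, then $\ppp$ is itself one of the primes indexing the $\z/3$ summands in Propositions \ref{H1CHAR0} and \ref{H1CHAR>0}: its residue field is $\F_3$ and $\ppp\notin S$. So your Step 2 claim that $H_1(\SL_2(A),\z)$ consists only of local contributions at primes $\qqq\neq\ppp$ is false. Moreover, this $\ppp$-summand, $\SL_2(A)\to\SL_2(\F_3)\to\SL_2(\F_3)^{ab}\simeq\z/3$, restricts along $i_1$ to the very Borel character $\left(\begin{smallmatrix}a&b\\0&a\end{smallmatrix}\right)\mapsto ab$ of Step 1, up to an automorphism of $\z/3$. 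Hence the map $\Gamma_0(A,\ppp)^{ab}\to\kappa(\ppp)^\times\oplus\kappa(\ppp)\oplus H_1(\SL_2(A),\z)$ you assemble has image of index $3$, and is not surjective.

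Correspondingly, your Step 3 check that for the Iwahori subgroup $I_\ppp$ ``nothing further survives'' beyond $\kappa(\ppp)^\times\oplus\kappa(\ppp)$ would fail. Conjugation by $\left(\begin{smallmatrix}0&1\\ \pi&0\end{smallmatrix}\right)$ normalizes $I_\ppp$. Composing it with the Borel character gives a second character $g\mapsto\bar d\cdot\overline{c/\pi}$, which is nonzero on $\left(\begin{smallmatrix}1&0\\ \pi&1\end{smallmatrix}\right)$ and zero on $\left(\begin{smallmatrix}1&1\\0&1\end{smallmatrix}\right)$. So in fact $I_\ppp^{ab}\simeq\kappa(\ppp)^\times\oplus\kappa(\ppp)\oplus\kappa(\ppp)$. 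One copy of $\kappa(\ppp)$ is the $\ppp$-summand of $H_1(\SL_2(A),\z)$, seen by $i_{1\ast}$. The other is the extra $\kappa(\ppp)$ in the statement, which is what the twisted embedding $i_2$ sees at $\ppp$. Your two mistakes cancel in the final count, which is why you land on the correct formula, but the argument as proposed does not establish it.

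\textbf{Repair.} Split off the $\ppp$-summand of $H_1(\SL_2(A),\z)$ and use both characters $ab$ and $d\cdot c/\pi$ to get surjectivity. For the upper bound:
\begin{itemize}
\item Write $u(x)$, $v(y)$ for the upper and lower unipotents, and take $a=1+\pi$. Then $a^2-1$ has valuation exactly $1$, since $2$ is a unit.
\item Commutators with $\mathrm{diag}(a,a^{-1})$ give all $u(x)$ with $x\in\ppp$ and all $v(y)$ with $y\in\ppp^2$.
\item The commutator $[u(1),v(y)]$ has diagonal part $1+y+y^2$, and its unipotent factors already lie in the commutator subgroup. By Hensel, $1+y+y^2$ takes every value in $1+\ppp$ as $y$ runs over the maximal ideal.
\end{itemize}
By the Iwahori decomposition, $I_\ppp^{ab}$ is then generated by the images of $-1$, $u(1)$ and $v(\pi)$, of orders dividing $2,3,3$. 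This gives $|I_\ppp^{ab}|\le 18$, which matches the lower bound from the three characters.

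Two smaller points:
\begin{itemize}
\item Under the paper's standing hypotheses, $S$ contains a non-complex place, so by Serre the congruence kernel is trivial. Your final sentence about the central kernel is therefore unnecessary, and as written it is not an argument.
\item Your ``hence $[\Gamma_0,\Gamma_0]$ contains a congruence subgroup'' needs finiteness of $\Gamma_0(A,\ppp)^{ab}$ first. This follows from finite generation of $\Gamma_0(A,\ppp)$ together with finiteness of the abelianization of its congruence completion, which your local computations supply.
\end{itemize}
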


Even though $H_1(\SL_2(\OO_{K,S}), \z)$ is completely determined by Propositions \ref{H1CHAR0} and \ref{H1CHAR>0}, few information about $H_2(\SL_2(\OO_{K,S}), \z)$ is known. Notably, this group is finitely generated \cite[Lemmas 3.14 and 3.15]{PIBT}, so we may write
\[
       H_2(\SL_2(A), \z) \simeq \z^r \oplus H_2(\SL_2(A), \z)_{\operatorname{tor}},
\]
 where the left and right summands are the free and torsion parts of $H_2(\SL_2(A), \z)$, respectively. In next sections, we aim to provide estimatives for the rank and for the torsion part of this group.  




\section{An exact sequence for \texorpdfstring{$H_2(\SL_2(\OO_{K,S}), \z)$}{Lg}} 

The purpose of the present section is to construct an exact sequence relating the groups $H_2(\SL_2(\OO_{K,S}), \z)$ and  $H_2(\SL_2(\OO_{K,T}), \z)$, where $T \supsetneq S$. This composes the main theorem of the current work, generalizing \cite[Theorem 6.6]{BBT2025-1} and \cite[Corollary 5.20]{h2016}.

Let $\pi$ be a uniformizer of the global field $K$ relative to the $\ppp$-adic valuation \cite[$\S$ 10.32]{keune2023} and consider the injective maps: 
\begin{align*}
i_1: \Gg_0(\OO_{K,S}, \ppp) &\hookrightarrow \SL_2(\OO_{K,S}) &  & &j_1:\SL_2(\OO_{K,S}) &\hookrightarrow \SL_2(\OO_{K,S})\\
\begin{pmatrix}
    a & b \\
    c & d
\end{pmatrix}
&\mapsto 
\begin{pmatrix}
    a & b \\
    c & d
\end{pmatrix};
& & 
&\begin{pmatrix}
    a & b \\
    c & d
\end{pmatrix}
&\mapsto 
\begin{pmatrix}
    a & b \\
    c & d
\end{pmatrix};
\end{align*}
\begin{align*}
i_2: \Gg_0(\OO_{K,S}, \ppp) &\hookrightarrow \SL_2(\OO_{K,S}) &  & &j_2:\SL_2(\OO_{K,S}) &\hookrightarrow \SL_2(\OO_{K,S})\\
\begin{pmatrix}
    a & b \\
    c & d
\end{pmatrix}
&\mapsto 
\begin{pmatrix}
    a & \pi b \\
   \pi^{-1} c & d
\end{pmatrix};
& & 
&\begin{pmatrix}
    a & b \\
    c & d
\end{pmatrix}
&\mapsto 
\begin{pmatrix}
    a & \pi^{-1} b \\
    \pi c & d
\end{pmatrix}.
\end{align*}

The following amalgamated product is proved in \cite{serre1980}, and follows from Serre's Theory of Trees .

\begin{thm}\label{amalg}
Let $A=\OO_{K,S}$ and $\ppp$ a nonzero prime ideal of $A$. Then
\[
\SL_2(\OO_{K, S\cup \{\ppp\}}) \simeq \SL_2(\OO_{K,S}) \ast_{\Gamma_0(\OO_{K,S}, \ppp)} \SL_2(\OO_{K,S}).
\]
\end{thm}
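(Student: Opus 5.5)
Apply Serre's theory of trees to $G=\SL_2(\OO_{K,S\cup\{\ppp\}})$ acting on the Bruhat--Tits tree $X$ of $\SL_2(K_\ppp)$, with $K_\ppp$ the completion of $K$ at $\ppp$. The vertices of $X$ are homothety classes of $\OO_\ppp$-lattices in $K_\ppp^2$, and the edges join lattices $L \supset L' \supset \pi L$. The plan rests on two facts. First, $G$ acts on $X$ without inversions, with quotient a single edge. Second, the stabilizers of the standard edge and its two endpoints are, after conjugation, $\Gamma_0(\OO_{K,S},\ppp)$ and two copies of $\SL_2(\OO_{K,S})$. Serre's structure theorem then gives $G$ as the amalgam of the vertex stabilizers along the edge stabilizer, with the inclusions being the maps $i_1,i_2$ defined above.

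The key steps are as follows.

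\textbf{Step 1 (no inversions).} $\SL_2(K_\ppp)$ preserves the parity $v_\ppp(\det)$ of a lattice basis modulo $2$, so it acts on $X$ without inversions.

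\textbf{Step 2 (the quotient is an edge).} Let $v_0=[\OO_\ppp^2]$ and $v_1=[\OO_\ppp\oplus\pi\OO_\ppp]$. The type-preserving subgroup $\SL_2(K_\ppp)$ acts transitively on vertices of each parity and transitively on edges. It remains to show that $G$ does as well. For this we use strong approximation for $\SL_2$ with respect to $S\cup\{\ppp\}$: $S$ contains an infinite place, so the complement of $S\cup\{\ppp\}$ omits at least one place. Hence $\SL_2(\OO_{K,S\cup\{\ppp\}})$ is dense in $\SL_2(K_\ppp)$. Since stabilizers of vertices and edges are open in $\SL_2(K_\ppp)$, we get $G\cdot g v_0=\SL_2(K_\ppp)\cdot v_0$ and the same for edges. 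Therefore $G\backslash X$ is the single edge $e=(v_0,v_1)$.

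\textbf{Step 3 (stabilizers).} We have $\mathrm{Stab}_G(v_0)=G\cap \SL_2(\OO_\ppp)=\SL_2(\OO_{K,S})$, because an entry that is $\ppp$-integral and integral away from $S\cup\{\ppp\}$ lies in $\OO_{K,S}$. Likewise $\mathrm{Stab}_G(v_1)=G\cap \delta\SL_2(\OO_\ppp)\delta^{-1}$, where $\delta=\mathrm{diag}(1,\pi)$. Conjugating by $\delta$ identifies this stabilizer with $\SL_2(\OO_{K,S})$; this conjugation is exactly the map $j_2$ (or its inverse). The edge stabilizer is the intersection of the two vertex stabilizers, which is $\Gamma_0(\OO_{K,S},\ppp)$ (up to the transpose convention). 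Under the two identifications, its inclusions become $i_1$ and $i_2$.

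Here $\pi$ should be chosen in $K$ to be a $\ppp$-uniformizer that is a unit at all other finite places outside $S$, so that $\delta$ normalizes the correct groups. Because the class group may be nontrivial, such a $\pi$ may not exist. In that case one instead uses a lattice description $\OO_{K,S}\oplus\ppp^{-1}$ for $v_1$, and the second vertex stabilizer is then $\SL(\OO\oplus\ppp^{-1})$ rather than literally $\SL_2(\OO_{K,S})$. I expect this to be the main obstacle: making the identification of the second vertex group with $\SL_2(\OO_{K,S})$ precise when $\ppp$ is not principal in $\OO_{K,S}$. I would first check whether the paper implicitly allows enlarging $S$ or assumes a principal $\ppp$ via $\pi$. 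Otherwise I would argue that $\OO_{K,S}\oplus\ppp^{-1}\cong \OO_{K,S}\oplus\OO_{K,S}$ fails in general, so the statement as written needs $\pi$ with $\pi\OO_{K,S}=\ppp$. I would adopt that hypothesis, which is implicit in the definition of $i_2$ and $j_2$ with a global $\pi$.

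\textbf{Step 4 (conclude).} Serre's theorem (Trees, I.4, Theorem 6) gives $G\cong \mathrm{Stab}(v_0)\ast_{\mathrm{Stab}(e)}\mathrm{Stab}(v_1)$, which is the stated isomorphism.
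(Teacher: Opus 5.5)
Your proposal is correct and is essentially the paper's proof. The paper only cites Serre's \emph{Trees} (II.1.4), and that is precisely the argument you reconstruct: $\SL_2(\OO_{K,S\cup\{\ppp\}})$ acts without inversion on the Bruhat--Tits tree at $\ppp$, strong approximation makes the quotient a single edge, you identify the stabilizers, and Serre's amalgam theorem finishes.

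Your caveat is also well founded. The maps $i_2$ and $j_2$ make sense only when the global uniformizer satisfies $\pi\OO_{K,S}=\ppp$; in general the second vertex group is $\SL(\OO_{K,S}\oplus\ppp)$, and the paper leaves this hypothesis implicit. For the bare abstract isomorphism it would in fact suffice that $[\ppp]$ be a square in the class group of $\OO_{K,S}$, since $\SL(L)=\SL(\mathfrak{a}L)$ for every fractional ideal $\mathfrak{a}$. In that case, however, the second embedding of $\Gamma_0(\OO_{K,S},\ppp)$ is no longer $i_2$.
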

\begin{proof}
  See \cite[p. 80, Example b)]{serre1980}.  
\end{proof}

This decomposition induces the Mayer-Vietoris exact sequence \cite[Corollary 7.7, \S 7, Chap. II]{brown1994}:

\begin{align*}
\begin{array}{c}
H_2(\Gamma_0(\OO_{K, S}, \ppp),\z) \overset{\alpha_2}{\larr} H_2(\SL_2(\OO_{K, S}),\z) 
\oplus H_2(\SL_2(\OO_{K,S}),\z) 
\overset{\beta_2}{\larr}  H_2(\SL_2(\OO_{K, S\cup \{\ppp\}}),\z)\\
\\
\larr H_1(\Gamma_0(\OO_{K, S}, \ppp),\z) \overset{\alpha_1}{\larr} H_1(\SL_2(\OO_{K, S}),\z)
\oplus H_1(\SL_2(\OO_{K,S}),\z)\\
\\
\overset{\beta_1}{\larr} H_1(\SL_2(\OO_{K, S\cup \{\ppp\}}),\z) \arr 0,
\end{array}
\end{align*}

\noindent where, for $k=1,2$, $\alpha_k(x)=({i_1}_\ast(x), {i_2}_\ast(x))$ and $\beta_k(y, z)={j_2}_\ast(z)-{j_1}_\ast(y)$, with ${i_1}_\ast, {i_2}_\ast$ induced by the maps $i_1, i_2$, and $ {j_1}_\ast, {j_2}_\ast$ induced by the maps $j_1, j_2$. In light of this, we are driven to address the homologies of the groups appearing in Theorem \ref{amalg}.

Now, given a finite abelian group $G$, we denote by $G_{(p)}$ its Sylow $p$-group. Moreover, for any field $F$, we consider the Borel subgroups of $\SL_2(F)$ defined as follows:
\[
B(F) = \bigg\{\begin{pmatrix}
        a & b \\ 0 & a^{-1}
    \end{pmatrix} \mid a, b \in F 
    \bigg\} \ \ \ \text{ and } \ \ \ 
B'(F) = \bigg\{\begin{pmatrix}
        a & 0 \\ c & a^{-1}
    \end{pmatrix} \mid a, c \in F 
    \bigg\}.
\]
Clearly, $B(F)$ and $B'(F)$ are isomorphic.

\begin{lem}
\label{pgroupiso}
    Let $k$ be any finite field of characteristic $p$ and let $M$ be an $\SL_2(k)$-module. Then, for all $i \geq 1$, the natural inclusion $B(k) \harr \SL_2(k)$ induces an isomorphism
    \[
    H_i(B(k), M)_{(p)} \simeq H_i(\SL_2(k), M)_{(p)}.
    \]
\end{lem}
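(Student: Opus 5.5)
The plan is to use the standard fact that restriction to a Sylow $p$-subgroup detects $p$-primary homology, combined with a stability (double coset) argument.

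First, let $U(k)=\{\begin{pmatrix}1&b\\0&1\end{pmatrix}\}\simeq (k,+)$. It is a Sylow $p$-subgroup of $\SL_2(k)$, since $|\SL_2(k)|=q(q^2-1)$ with $q=|k|$. The index $[B(k):U(k)]=q-1$ is prime to $p$, so $U(k)$ is also a Sylow $p$-subgroup of $B(k)$. For a finite group $G$ with Sylow $p$-subgroup $P$ and any $G$-module $M$, the composite $\operatorname{cor}\circ\operatorname{res}$ on $H_i(G,M)$ is multiplication by $[G:P]$, which is invertible on the $p$-primary part. Hence the map $H_i(H,M)_{(p)}\to H_i(G,M)_{(p)}$ induced by any subgroup $H\supseteq P$ is surjective; in particular this holds for $H=B(k)$. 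For injectivity I will use that $\operatorname{res}\circ\operatorname{cor}\colon H_i(B,M)\to H_i(B,M)$ is given by the double coset formula over $B\backslash \SL_2(k)/B$. Alternatively, it suffices to identify the image of $H_i(\SL_2(k),M)_{(p)}$ in $H_i(U,M)$ with the $N_G(U)$-stable (or $G$-stable) elements, and likewise for $B$.

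The key step is the Cartan--Eilenberg stable element theorem, in its homology form: $H_i(G,M)_{(p)}$ is the quotient of $H_i(P,M)$ by the relations $c_g$-coinvariance ($x\sim$ the image of $x$ under the conjugation maps $P\cap gPg^{-1}\to P$, appropriately). The relevant double cosets are $B\backslash \SL_2(k)/B=\{1,w\}$, with $w=\begin{pmatrix}0&1\\-1&0\end{pmatrix}$ (Bruhat decomposition). For $g\in B$ the stability conditions for $G$ and for $B$ coincide. For $g\in BwB$ one has $U\cap gUg^{-1}$ equal to a conjugate of $U\cap U^{w}=U\cap B'(k)=1$, so $U$ is a trivial-intersection subgroup. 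The conditions imposed by $g\notin B$ therefore only involve $H_i(1,M)$, which vanishes for $i\ge1$, and they are vacuous. Consequently the stable elements for $G$ and for $B=N_G(U)$ agree, and we get $H_i(B,M)_{(p)}\simeq H_i(\SL_2(k),M)_{(p)}$ for $i\geq1$. Concretely, the transfer argument reads: $\operatorname{res}^G_B\operatorname{cor}^G_B=\mathrm{id}+\operatorname{cor}^B_{B\cap wBw^{-1}}\circ c_w\circ\operatorname{res}^B_{B\cap wBw^{-1}}$. Here $B\cap wBw^{-1}=T$ is the diagonal torus, of order prime to $p$, so $H_i(T,M)_{(p)}=0$ for $i\ge1$. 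Thus $\operatorname{res}\circ\operatorname{cor}$ is the identity on $H_i(B,M)_{(p)}$, giving injectivity, and surjectivity follows from the Sylow argument above.

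The main obstacle is the bookkeeping of the double coset (Mackey) formula with the coefficient module $M$, which is only a $G$-module. Care is needed that the twisted module ${}^wM$ is handled correctly and that $H_i(T,M)$ is $p$-torsion-free for $i\ge1$. The latter holds because $|T|=q-1$ is prime to $p$, so $H_i(T,M)$ is annihilated by $q-1$. The case $i=0$ is excluded precisely because $H_0(T,M)$ need not vanish.
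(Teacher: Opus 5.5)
Your proof is correct, but it does not follow the paper's route, because the paper gives no argument of its own: it just cites \cite[Corollary 3.10.2]{h2013}. Your concrete transfer computation is a complete, self-contained proof.
\begin{itemize}
\item \textbf{Surjectivity.} $\operatorname{cor}^G_B\circ\operatorname{res}^G_B$ is multiplication by $[G:B]=q+1$. This is invertible on $H_i(\SL_2(k),M)_{(p)}$, so $\operatorname{cor}^G_B$ is onto the $p$-part.
\item \textbf{Injectivity.} The double coset formula over $B\backslash \SL_2(k)/B=\{1,w\}$ gives $\operatorname{res}^G_B\circ\operatorname{cor}^G_B=\mathrm{id}+\theta$. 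Here $\theta$ factors through $H_i(T,M)$ with $T=B\cap wBw^{-1}$ of order $q-1$. Hence $\theta$ kills $p$-primary classes for $i\geq 1$.
\item \textbf{Conjugation bookkeeping.} The twist by $w$ on the coefficients is exactly the map $c_w$ in the standard Mackey formula for a $G$-module $M$. It does not affect annihilation by $|T|$, so this part is harmless.
\item \textbf{Finiteness.} You only need that $H_i(G,M)$, for $i\geq 1$, is killed by $|G|$. This makes the $p$-primary part a well-defined summand even when $M$ is not finitely generated.
\item \textbf{Stable-element paragraph.} This part is heuristic, and its homological formulation is stated loosely. You can drop it, since the Mackey computation already does all the work.
\end{itemize}

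Compared with the citation, your route uses only the Bruhat decomposition and the standard transfer identities. It visibly covers arbitrary $\SL_2(k)$-modules $M$, the generality in which the lemma is stated. It shows that the inverse on $p$-parts is the transfer $\operatorname{res}^G_B$. It also explains why $i\geq 1$ is needed: the $H_0(T,M)$ term need not vanish. The citation, in turn, buys brevity. It also ties the lemma to Hutchinson's explicit computations of $H_\ast(B(k),\z)$, which the paper relies on anyway in the proof of Proposition \ref{surj}.
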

\begin{proof}
    See \cite[Corollary 3.10.2]{h2013}. 
\end{proof}

\begin{rem} 
    Since $B(k)$ and $\SL_2(k)$ are finite groups, their homologies $H_i(B(k), M)$ and $H_i(\SL_2(k), M)$ are also finite \cite[Corollary 10.2, \S 10, Chap. III]{brown1994}.
\end{rem}

Let $1 \arr H \arr G \arr G/H \arr 1$ be an extension. 
The Lyndon/Hochschild-Serre spectral sequence \cite[Theorem 6.3, \S 6, Chap. VII]{brown1994} associated to this extension is the first quadrant spectral sequence 
\[
E^2_{p,q} = H_p(G/H, H_q(H, \z)) \implies H_{p+q}(G, \z).
\]
The next lemma will be used in what follows.

\begin{lem} \label{Lambda(G)}
    Let $\Lambda(G)$ be the cokernel of the natural map $H_2(H, \z) \arr H_2(G, \z)$, i.e. the sequence 
    \[
    H_2(H, \z) \arr H_2(G, \z) \arr \Lambda(G)\arr 0
    \]
    is exact. Then $\Lambda(G)$ fits into the exact sequence
    \[
    0 \arr E^{\infty}_{1,1} \arr \Lambda(G) \arr E^{\infty}_{2,0} \arr 0.
    \]
\end{lem}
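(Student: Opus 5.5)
The plan is to read off the structure of $\Lambda(G)$ from the filtration that the Lyndon/Hochschild--Serre spectral sequence puts on $H_2(G,\z)$. Convergence of $E^2_{p,q}=H_p(G/H,H_q(H,\z))$ to $H_\ast(G,\z)$ provides a filtration
\[
0=F_{-1}\subseteq F_0 \subseteq F_1 \subseteq F_2 = H_2(G,\z)
\]
whose successive quotients are $F_0\simeq E^\infty_{0,2}$, $F_1/F_0\simeq E^\infty_{1,1}$ and $F_2/F_1\simeq E^\infty_{2,0}$. The argument then has two steps.

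\textbf{Step 1: the image of $H_2(H,\z)$ is $F_0$.} We use the standard edge-map description. The map $H_2(H,\z)\arr H_2(G,\z)$ induced by inclusion factors as
\[
H_2(H,\z) \two H_2(H,\z)_{G/H} = E^2_{0,2} \two E^\infty_{0,2} = F_0 \harr H_2(G,\z).
\]
The second arrow is surjective because $E^\infty_{0,2}$ is a quotient of $E^2_{0,2}$: nothing maps out of the $p=0$ column, while $d^2$ and $d^3$ map into it. Hence the image of $H_2(H,\z)$ in $H_2(G,\z)$ is exactly $F_0$, and so $\Lambda(G)\simeq H_2(G,\z)/F_0$.

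\textbf{Step 2: filter the quotient.} The quotient $H_2(G,\z)/F_0$ contains $F_1/F_0\simeq E^\infty_{1,1}$, and the resulting quotient is $F_2/F_1\simeq E^\infty_{2,0}$. This gives the short exact sequence
\[
0\arr E^\infty_{1,1}\arr \Lambda(G)\arr E^\infty_{2,0}\arr 0,
\]
and exactness of $H_2(H,\z)\arr H_2(G,\z)\arr\Lambda(G)\arr 0$ holds by definition of $\Lambda(G)$ as a cokernel.

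The only step needing care is the identification of the edge homomorphism with the map induced by inclusion, and the fact that the bottom filtration piece is the image of $E^2_{0,2}$. This is standard; see \cite[\S VII.6]{brown1994}. I would cite it rather than reprove it. If desired, it can be checked by comparison with the spectral sequence of the trivial extension $H\arr H\arr 1$, using naturality.
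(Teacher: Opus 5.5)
Your proposal is correct and follows essentially the paper's argument: both identify the image of $H_2(H,\z)$ in $H_2(G,\z)$ with $F_0\simeq E^\infty_{0,2}$ via the surjective edge map, and then read off $E^\infty_{1,1}$ and $E^\infty_{2,0}$ from the induced filtration on $H_2(G,\z)/F_0$. The only cosmetic difference is that the paper obtains $\ker(\Lambda(G)\to E^\infty_{2,0})\simeq F_1/F_0$ by applying the snake lemma to a comparison diagram, whereas you get it directly from the third isomorphism theorem.
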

\begin{proof}
    The spectral sequence gives the filtration 
    \[
    0 = F_{-1}H_2(G,\z) \subseteq F_0H_2(G,\z) \subseteq F_1H_2(G,\z) \subseteq F_2H_2(G,\z)= H_2(G,\z).
    \]
    A careful analysis of this spectral sequence gives the following short exact sequences:
    \[
    0 \arr E^{\infty}_{0,2} \arr F_1H_2(G,\z) \arr E^{\infty}_{1,1} \arr 0
    \]
    and 
    \[
    0 \arr F_1H_2(G,\z) \arr H_2(G, \z) \arr E^{\infty}_{2,0} \arr 0.
    \]
    Moreover, we also obtain the surjective edge map:
    \[
    \lambda: H_2(H, \z) \two H_2(H, \z)_{G/H} \simeq E^2_{0,2} \two E^{\infty}_{0,2} \simeq F_0H_2(G,\z).
    \]
    Let $\gamma$ be the following composition
    \[
     H_2(H,\z) \overset{\lambda}{\two} F_0H_2(G,\z) \hookrightarrow H_2(G, \z).
    \]
    Then $\Lambda(G) = \coker \ \gamma$. Thus, we then have the exact sequence
    \[
    H_2(H, \z) \overset{\gamma}{\arr} H_2(G, \z) \arr \Lambda(G) \arr 0.
    \]
    Now, let $\beta$ be the composition
    \[
    H_2(H,\z) \overset{\lambda}{\two} F_0H_2(G,\z) \hookrightarrow F_1H_2(G,\z).
    \]
    Then we have the following commutative diagram with exact rows:
    \[
    \begin{tikzcd}
    & H_2(H, \z) \ar[r, "\gamma"]\ar[d, "\beta"]& H_2(G, \z) \ar[r]\ar[d, equal]& \Lambda(G) 
    \ar[r] \ar[d, "\alpha"]& 0\\
    0 \ar[r] & F_1H_2(G,\z) \ar[r]& H_2(G, \z) \ar[r]& E^{\infty}_{2,0} \ar[r]& 0,
    \end{tikzcd}
    \]
    where the map $\alpha$ is surjective and is induced as follows:
    \[
    \Lambda(G) \simeq \frac{H_2(G,\z)}{\rm im \ \gamma} = \frac{H_2(G,\z)}{F_0H_2(G,\z)} \two \frac{H_2(G,\z)}{F_1H_2(G,\z)} \simeq E^{\infty}_{2,0}.
    \]
    By applying the Snake lemma to the above diagram, we get $\rm ker \ \alpha = \rm coker \ \beta$. But $\textrm{coker} \ \beta = E^{\infty}_{1,1}$. Putting all these together, we obtain the required short exact sequence:
    \[
    0 \arr E^{\infty}_{1,1} \arr \Lambda(G) \overset{\alpha}{\arr} E^{\infty}_{2,0} \arr 0.
    \]
\end{proof}

The next result addresses a crucial technical fact on the second integral homology of $\Gamma_0(A,\ppp)$ and is derived from a combination of the techniques present in \cite[Proposition 5.16]{h2016} and \cite[Theorem 5.2]{BBT2025-1}.

\begin{prp} \label{surj}
   Let $A=\OO_{K,S}$ and let $\ppp$ be a nonzero prime ideal of $A$ such that $|\kappa(\ppp)|\neq 2$. Then, the natural maps 
    \[
    i_{1\ast}: H_2(\Gamma_0(A, \ppp), \z) \arr H_2(\SL_2(A), \z) \ \text{ and } \ i_{2\ast}: H_2(\Gamma_0(A, \ppp), \z) \arr H_2(\SL_2(A), \z)
    \]
    are surjective.
\end{prp}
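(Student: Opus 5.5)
Since $i_2$ is $i_1$ followed by conjugation by $\mathrm{diag}(\pi,1)$ (an automorphism of $\SL_2(K)$ carrying $\Gamma_0(A,\ppp)$ onto its transpose-type counterpart $\Gamma^0(A,\ppp)$), it suffices to treat $i_{1\ast}$; the argument for $i_{2\ast}$ is the same with $B'$ in place of $B$. I would compare the two extensions
\[
1 \arr \Gamma(A,\ppp) \arr \Gamma_0(A,\ppp) \arr B(\kappa(\ppp)) \arr 1, \qquad 1 \arr \Gamma(A,\ppp) \arr \SL_2(A) \arr \SL_2(\kappa(\ppp)) \arr 1,
\]
where $\Gamma(A,\ppp)$ is the principal congruence subgroup. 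Surjectivity of $\SL_2(A)\arr\SL_2(\kappa(\ppp))$ holds because $A$ is Dedekind, so $\SL_2(A/\ppp)$ is generated by elementary matrices, which lift. The inclusion $i_1$ gives a morphism of Lyndon/Hochschild-Serre spectral sequences that is the identity on the kernel. Write $\Lambda(\Gamma_0)$ and $\Lambda(\SL_2)$ for the cokernels of $H_2(\Gamma(A,\ppp),\z)\arr H_2(-,\z)$, as in Lemma \ref{Lambda(G)}. Since the image of $H_2(\Gamma(A,\ppp),\z)$ in $H_2(\SL_2(A),\z)$ factors through $H_2(\Gamma_0(A,\ppp),\z)$, a diagram chase shows it is enough to prove that $\Lambda(\Gamma_0)\arr \Lambda(\SL_2)$ is surjective.

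By Lemma \ref{Lambda(G)} and the five lemma (surjective version), this reduces to two surjectivity statements: $E^\infty_{2,0}(\Gamma_0)\arr E^\infty_{2,0}(\SL_2)$ and $E^\infty_{1,1}(\Gamma_0)\arr E^\infty_{1,1}(\SL_2)$.

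\textbf{The $E_{2,0}$ term.} Here $E^2_{2,0}=H_2(B(k),\z)$ versus $H_2(\SL_2(k),\z)$, with $k=\kappa(\ppp)$. For $|k|\ge 4$, and with the usual exception of $\F_9$, the Schur multiplier of $\SL_2(k)$ vanishes. In general its $p$-part is hit by $B(k)$ by Lemma \ref{pgroupiso}. The prime-to-$p$ part of $H_2(B(k),\z)$ is the part coming from the torus, which is cyclic and so has trivial $H_2$; for the $\ell\ne p$ part of $H_2(\SL_2(k))$ I would use that its $\ell$-Sylow subgroups are cyclic or generalized quaternion, hence have vanishing $H_2$. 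So $E^2_{2,0}$ already surjects. To pass to $E^\infty$, note that $E^\infty_{2,0}$ is the image of $H_2(G)\arr E^2_{2,0}$. Its surjectivity then follows once we know the differentials $d_2:E^2_{2,0}\arr E^2_{0,1}$ are compatible. Concretely, $\ker d_2$ for $\SL_2$ is the image of $\ker d_2$ for $\Gamma_0$, which I would check by naturality together with the surjectivity just established.

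\textbf{The $E_{1,1}$ term (main obstacle).} Here we compare $H_1(B(k),H_1(\Gamma(A,\ppp)))$ with $H_1(\SL_2(k),H_1(\Gamma(A,\ppp)))$. The plan has three steps.

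First, identify $H_1(\Gamma(A,\ppp),\z)$ as an $\SL_2(k)$-module. Using the congruence subgroup property, as in \cite{PIBT}, its quotient relevant here is $\mathfrak{sl}_2(k)$ or $M_2(k)$-like, which is a $p$-group. The complementary part must be handled via Theorem \ref{Gg_0Glob}, comparing $H_1(\Gamma_0)$ with $k^\times\oplus H_1(\SL_2(A))$.

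Second, on the $p$-primary part, Lemma \ref{pgroupiso} with $M=H_1(\Gamma(A,\ppp),\z)$ gives surjectivity $H_1(B(k),M)_{(p)}\arr H_1(\SL_2(k),M)_{(p)}$.

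Third, on the prime-to-$p$ part, use a transfer argument: $[\SL_2(k):B(k)]=|k|+1$, so corestriction composed with restriction is multiplication by $|k|+1$. This is invertible on $\ell$-parts for $\ell\nmid |k|+1$. For the remaining primes one needs the structure of $M$ itself, and this is where the hypothesis $|k|\ne2$ enters, exactly as in \cite[Proposition 5.16]{h2016} and \cite[Theorem 5.2]{BBT2025-1}.

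I expect this last step to be the hard part. Handling $|k|=3$, where $\SL_2(\F_3)$ is not perfect, will require the extra summand $\kappa(\ppp)$ in Theorem \ref{Gg_0Glob}(ii).
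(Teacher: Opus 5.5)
Your reduction coincides with the paper's: the same two extensions with kernel $\Gamma(A,\ppp)$, the groups $\Lambda$ of Lemma~\ref{Lambda(G)}, and the split into $E^\infty_{2,0}$ and $E^\infty_{1,1}$. However, the proposal stops exactly where the real work is.

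\textbf{The $E_{1,1}$ step.} Your transfer step only shows that the relevant cokernels are killed by $|\kappa(\ppp)|+1$. For the primes $\ell\mid|\kappa(\ppp)|+1$ you only say that ``one needs the structure of $M$''. These primes genuinely occur: $\ell=2$ whenever $|\kappa(\ppp)|$ is odd, and $H_1(\SL_2(A),\z)$ is a $\{2,3\}$-group.

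Theorem~\ref{Gg_0Glob}, which you propose to use, cannot fill this gap. It describes $H_1(\Gamma_0(A,\ppp),\z)$, which only sees the coinvariants $M_{B(\kappa(\ppp))}$, not the $\SL_2(\kappa(\ppp))$-module $M=H_1(\Gamma(A,\ppp),\z)$.

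The missing input, which the paper uses, is \cite[Lemma 2.9]{PIBT}: $M\simeq G\oplus H_1(\SL_2(A),\z)$ with $G$ a $p$-group, where $p=\char(\kappa(\ppp))$. So for $\ell\neq p$ the $\ell$-primary part $M_{(\ell)}$ comes from $H_1(\SL_2(A),\z)$ and carries the trivial action. Universal coefficients then give $H_1(\SL_2(\kappa(\ppp)),M_{(\ell)})\simeq H_1(\SL_2(\kappa(\ppp)),\z)\otimes M_{(\ell)}=0$, because $\SL_2(\kappa(\ppp))$ is perfect for $|\kappa(\ppp)|\geq 4$. Hence $E^2_{1,1}(\SL_2(A))$ is $p$-primary, and Lemma~\ref{pgroupiso} gives surjectivity from $E^2_{1,1}(\Gamma_0(A,\ppp))$. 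This passes automatically to $E^\infty_{1,1}=E^2_{1,1}/d^2(E^2_{3,0})$.

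\textbf{The $E_{2,0}$ step.} This step also has a hole. Surjectivity on $E^2_{2,0}$ plus naturality does not give surjectivity on $E^\infty_{2,0}=\ker d^2_{2,0}$. A preimage $y$ of $x\in\ker d^2$ only satisfies $d^2y\in\ker\bigl(M_{B(\kappa(\ppp))}\arr M_{\SL_2(\kappa(\ppp))}\bigr)$, and this map on $E^2_{0,1}$ is in general not injective.

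\textbf{How to close both gaps.} Your own transfer idea finishes the argument once the input above is available.
\begin{itemize}
\item $\Lambda(\SL_2(A))$ is an extension of a subgroup of $H_2(\SL_2(\kappa(\ppp)),\z)$, which is $p$-primary as you observed, by a quotient of the $p$-primary group $E^2_{1,1}(\SL_2(A))$. Hence $\Lambda(\SL_2(A))$ is $p$-primary.
\item $\coker(i_{1\ast})$ is a quotient of $\Lambda(\SL_2(A))$, since the image of $H_2(\Gamma(A,\ppp),\z)$ lies in $\im(i_{1\ast})$.
\item The transfer $H_2(\SL_2(A),\z)\arr H_2(\Gamma_0(A,\ppp),\z)$ shows $\coker(i_{1\ast})$ is killed by $[\SL_2(A):\Gamma_0(A,\ppp)]=|\kappa(\ppp)|+1$, which is prime to $p$.
\end{itemize}
Therefore $\coker(i_{1\ast})=0$. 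This route avoids the term-by-term comparison of $E^\infty$-terms that the paper carries out. The paper also treats $|\kappa(\ppp)|=3$ separately, by citing \cite[Theorem 5.2]{BBT2025-1}.
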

\begin{proof}
     We will show that $i_{1\ast}$ is surjective and, using analogous arguments, one can conclude the same for $i_{2\ast}$. For this, we split the proof in two cases: when $|\kappa(\ppp)| \neq 3$ and when $|\kappa(\ppp)| = 3$. 

     \vspace{0.3cm}
     (i) $|\kappa(\ppp)|\neq 3$: 
     \vspace{0.3cm}

     Consider the following commutative diagram with exact rows: 
\begin{equation*}
\begin{tikzcd}
1 \ar[r] & \Gamma(A,\ppp) \ar[r]\ar[d, equal]& \Gamma_0(A,\ppp) \ar[r]\ar[d, "i_1"]& \B(\kappa(\ppp)) 
\ar[r] \ar[d, hook]& 1\\
1 \ar[r] & \Gamma(A,\ppp) \ar[r]& \SL_2(A) \ar[r]& \SL_2(\kappa(\ppp)) \ar[r]& 1.
\end{tikzcd}
\end{equation*}

From the top exact sequence we obtain a spectral sequence on the second page: 
\[
E_{i,j}^2(\Gg_0(A, \ppp))=H_i(B(\kappa(\ppp), H_1(\Gg(A, \ppp), \z)) \implies H_{i+j}(\Gg_0(A, \ppp), \z),
\]
while the bottom one gives us: 
\[
E_{i,j}^2(\SL_2(A))=H_i(\SL_2(\kappa(\ppp)), H_1(\Gg(A, \ppp), \z)) \implies H_{i+j}(\SL_2(A), \z).
\]
The image of the edge morphisms: $E_{0, j}^{\infty}(H) \rightarrow H_j(H,\z)$, coincides with the image of the maps $H_j(\Gg(A, \ppp), \z) \rightarrow H_j(H, \z)$, for $H\in \{\Gg_0(A, \ppp), \SL_2(A)\}$. Hence, by analysis on the the $E^\infty$-terms of degree $2$ in the extension, we obtain a commutative digram of the form: 
     \begin{equation}
\begin{tikzcd}
\label{diag}
 H_2(\Gamma(A,\ppp), \z) \ar[r]\ar[d, equal]& H_2(\Gamma_0(A,\ppp), \z) \ar[r]\ar[d, "i_{1\ast}"]& \Lambda(\Gg(A, \ppp)) 
\ar[r] \ar[d]& 0\\
H_2(\Gamma(A,\ppp), \z) \ar[r]& H_2(\SL_2(A), \z) \ar[r]& \Lambda(\SL_2(A)) \ar[r]& 0,
\end{tikzcd}
\end{equation}
where $\Lambda(H)$ is a group fitting into the exact sequence: 
\begin{equation}
\label{exactlamb}
0 \rightarrow  E_{1,1}^\infty(H) \rightarrow \Lambda(H) \rightarrow E_{2,0}^\infty(H) \rightarrow 0
\end{equation}
(see Lemma \ref{Lambda(G)}). By diagram chasing, it is enough to prove that the map $\Lambda(\Gg(A, \ppp)) \rightarrow \Lambda(\SL_2(A))$ is surjective. 
By \cite[Lemma 2.9]{PIBT}, we have that:
\[
H_1(\Gg(A, \ppp), \z) \simeq G \oplus H_1(\SL_2(A), \z),
\]
where $G$ is a $p$-group.  By Propositions \ref{H1CHAR0} and \ref{H1CHAR>0}, $|H_1(\SL_2(A), \z)| = 2^k3^s$, for some $k, s \in \z$. If $\char(\kappa(\ppp)) \neq 2$, the same propositions give $H_1(\Gg(A, \ppp), \z)_{(p)} =G$. Otherwise, when $\char(\kappa(\ppp))=2$, we have $H_1(\Gg(A, \ppp), \z)_{(p)}=G \oplus (\z/4)^{S_2} \oplus (\z/2 \oplus \z/2)^{S_2'}$. Either way, we may write $H_1(\Gg(A, \ppp), \z)=\mathcal{K} \oplus \mathcal{G}$, where $\mathcal{K}=H_1(\Gg(A, \ppp), \z)_{(p)}$.

Now, 
\begin{align*}
E_{i, 1}^2(\Gg_0(A, \ppp))=&H_i(B(\kappa(\ppp), H_1(\Gg(A, \ppp), \z)) \simeq H_i(B(\kappa(\ppp)), \mathcal{K} \oplus \mathcal{G)}\\
&\simeq H_i(B(\kappa(\ppp)), \mathcal{K}) \oplus H_i(B(\kappa(\ppp)), \mathcal{G}).   
\end{align*}

Once $B(\kappa(\ppp))$ acts trivially on $H_1(\SL_2(A), \z)$, it also acts trivially on $\mathcal{K}$. Therefore, by Universal Coefficient-Theorem, 
\begin{align*}
    H_i(B(\kappa(\ppp)), \mathcal{K}) &\simeq H_i(B(\kappa(\ppp)), \z) \otimes_{\z} \mathcal{K},\\
    H_i(B(\kappa(\ppp)), \mathcal{G}) &\simeq H_i(B(\kappa(\ppp)), \z) \otimes_{\z} \mathcal{G}.
\end{align*}
By \cite[Corollary 3.10 (2)]{h2013}, we have that $H_i(B(\kappa(\ppp), \z) \otimes_{\z} \mathcal{G}\simeq 0$ for $|\kappa(\ppp)| \neq 2, 3$. This implies that $E_{i,1}^2(\Gg_0(A, \ppp)) \simeq H_i(B(\kappa(\ppp)), \mathcal{K})=H_i(B(\kappa(\ppp)), \mathcal{K})_{(p)}$ and by similar arguments we can show that $E_{i, 1}^2 (\SL_2(A)) \simeq H_i(\SL_2(\kappa(\ppp)), \mathcal{K})_{(p)}$. As a consequence, Lemma~\ref{pgroupiso} ensures that
\[
E_{i,0}^2(\Gg_0(A, \ppp)) \simeq E_{i,0}^2(\SL_2(A)). 
\]
Evoking Lemma \ref{pgroupiso} again we obtain: 
\[
E_{i,0}^2(\Gg_0(A, \ppp)) =H_i(B(\kappa(\ppp), \z)_{(p)} \simeq H_i(\SL_2(\kappa(\ppp)), \z)_{(p)}=E_{i, 0}^2(\SL_2(A)). 
\]
On the other hand, since $E_{1, 1}^\infty=\operatorname{ker}(d_{1,1}^2: E_{3,0}^2 \rightarrow E_{1, 1}^2)$, and $E_{i, j}^2(\Gg_0(A, \ppp)) \simeq  E_{i, j}^2(\SL_2(A))$ for $i \in \{1, 3\}$ and $j \in \{0, 1\}$, we have 
\[
E_{1,1}^\infty (\SL_2(A)) \simeq E_{1, 1}^\infty(\Gg_0(A, \ppp)).
\]
Moreover, $E_{2,0}^\infty=\ker(d_{2,0}^2: E_{2,0}^2 \rightarrow E_{0,1}^2)$ and, by the direct computations from \cite[$\S$ 3]{h2013}, we get: 
\[
H_2(B(\kappa(\ppp)), \z)=H_2(B(\kappa(\ppp)), \z)_{(p)}\simeq H_2(\SL_2(\kappa(\ppp), \z)_{(p)}=H_2(\SL_2(\kappa(\ppp)), \z),
\]
so 
\[
E_{2,0}^\infty(\SL_2(A)) \simeq E_{2,0}^{\infty} (\Gg_0(A, \ppp)). 
\]

By the sequence (\ref{exactlamb}), we obtain $\Lambda(\SL_2(A))\simeq \Lambda(\Gg_0(A, \ppp))$. Now, diagram chasing applied on diagram (\ref{diag}) concludes the proof for this case.
\vspace{0.3cm}

(ii) $|\kappa(\ppp)|=3$: 
\vspace{0.3cm}

In this case $B(\kappa(\ppp))=\F_3$, so the proof can be obtained as in \cite[Theorem 5.2]{BBT2025-1}.

\end{proof}

\begin{lem}
\label{OK,S-OKS,P}
    Given a ring of $S$-integers $\OO_{K,S}$ and a prime $\ppp \in \Spec(\OO_{K}) \backslash S$ such that $|\kappa(\ppp)| \geq 3$, we have an exact sequence:
       \[
    H_2(\SL_2(\mathcal{O}_{K,S}), \z) \rightarrow H_2(\SL_2(\mathcal{O}_{K,S\cup \{{\ppp}\}}), \z) \rightarrow \kappa(\ppp)^\times \rightarrow 0.
    \]
\end{lem}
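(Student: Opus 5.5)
The plan is to read the claimed sequence off the Mayer--Vietoris sequence attached to the amalgam of Theorem \ref{amalg}, with $A=\OO_{K,S}$ and $T=S\cup\{\ppp\}$:
\[
H_2(\SL_2(A))^{\oplus 2}\xrightarrow{\beta_2} H_2(\SL_2(\OO_{K,T}))\xrightarrow{\partial} H_1(\Gg_0(A,\ppp))\xrightarrow{\alpha_1} H_1(\SL_2(A))^{\oplus 2}\xrightarrow{\beta_1} H_1(\SL_2(\OO_{K,T}))\arr 0 .
\]
Here all coefficients are $\z$. The argument has three steps: simplify the left end, identify $\ker\alpha_1$, and conclude.

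\emph{Step 1 (left end).} Proposition \ref{surj} applies since $|\kappa(\ppp)|\ge 3$, so $i_{2\ast}$ is surjective. Given $(y,z)$, choose $x$ with $i_{2\ast}(x)=z$. Then
\[
\beta_2(y,z)=\beta_2(y-i_{1\ast}(x),0)+\beta_2\alpha_2(x)=-j_{1\ast}(y-i_{1\ast}(x)).
\]
Hence $\operatorname{im}\beta_2=\operatorname{im}j_{1\ast}$, and $j_{1\ast}$ is precisely the natural map $H_2(\SL_2(A))\arr H_2(\SL_2(\OO_{K,T}))$. This gives exactness of
\[
H_2(\SL_2(A))\arr H_2(\SL_2(\OO_{K,T}))\arr \ker\alpha_1\arr 0 .
\]

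\emph{Step 2 (identify $\ker\alpha_1$).} By Theorem \ref{Gg_0Glob},
\[
H_1(\Gg_0(A,\ppp))\simeq \kappa(\ppp)^\times\oplus H_1(\SL_2(A))
\]
when $|\kappa(\ppp)|\ge4$, and there is an extra summand $\kappa(\ppp)\simeq\z/3$ when $|\kappa(\ppp)|=3$. I will use that, in the construction from \cite{PIBT}, the $H_1(\SL_2(A))$ summand is split by $i_{1\ast}$ (the inclusion). The $\kappa(\ppp)^\times$ summand comes from the diagonal torus via $\Gg_0\arr B(\kappa(\ppp))\arr\kappa(\ppp)^\times$, and it maps to zero under both $i_{1\ast}$ and $i_{2\ast}$. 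The reason is that these classes are generated by diagonal matrices $\mathrm{diag}(u,u^{-1})$ whose images in $H_1(\SL_2(A))$ are trivial. The abelianization of $\SL_2(A)$ is detected only at residue fields $\F_2,\F_3$, through unipotents and their local quotients, and the torus classes die there.

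Consequently
\[
\ker\alpha_1=\kappa(\ppp)^\times\oplus\ker\bigl(H_1(\SL_2(A))\xrightarrow{(\mathrm{id},\,i_{2\ast}i_{1\ast}^{-1})}H_1(\SL_2(A))^2\bigr).
\]
The second factor is $0$ because its first coordinate is the identity. Adding the $\z/3$ summand in the case $|\kappa(\ppp)|=3$ would break the claim, so I must show that $\alpha_1$ is injective on it.

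\emph{Step 3 (the main obstacle).} The hard part is the precise bookkeeping of $\alpha_1$ on the decomposition of Theorem \ref{Gg_0Glob}. First, I must verify that the torus summand is killed by both $i_{1\ast}$ and $i_{2\ast}$; note that $i_2$ conjugates by $\mathrm{diag}(\pi,1)$, which fixes diagonal matrices. Second, in the $|\kappa(\ppp)|=3$ case, I must show that the $\kappa(\ppp)$ summand, coming from upper unipotents $\bigl(\begin{smallmatrix}1&b\\0&1\end{smallmatrix}\bigr)$, maps injectively to the $\z/3$ summand of $H_1(\SL_2(A))$ indexed by $\ppp\in S_3$ (Proposition \ref{H1CHAR0}). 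I would first try to check this by comparing with the reduction $\SL_2(A)\arr\SL_2(\F_3)\arr\z/3$, under which the unipotent generator of $B(\F_3)$ survives. I would cross-check by counting orders: since $\ppp$ becomes inverted, $H_1(\SL_2(\OO_{K,T}))$ loses exactly the $\z/3$ at $\ppp$. The exactness of the tail then forces $|\ker\alpha_1|=|\kappa(\ppp)^\times|$, which settles the decomposition of $\ker\alpha_1$.

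Combining Step 1 with Step 2 gives the exact sequence stated in the lemma.
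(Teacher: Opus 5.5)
Your skeleton is the paper's: Mayer--Vietoris for Theorem \ref{amalg}, Proposition \ref{surj} to collapse the left end, and Theorem \ref{Gg_0Glob} with Propositions \ref{H1CHAR0}, \ref{H1CHAR>0} for the right end. Your Step 1 is correct, and more explicit than the paper's ``boils down''. The gap is in Step 2. You claim the $\kappa(\ppp)^\times$ summand is generated by classes of $\mathrm{diag}(u,u^{-1})$, $u\in A^\times$, and that these vanish in $H_1(\SL_2(A),\z)$. Both parts are false in general.

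\emph{Generation.} The image of $A^\times$ in $\kappa(\ppp)^\times$ is usually a proper subgroup, so such matrices need not cover $\kappa(\ppp)^\times$.

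\emph{Vanishing.} Take $A=\z[1/p]$ with $p\ge 5$, so $H_1(\SL_2(A),\z)\simeq\z/12$. There $-I$ is the element of order $2$: it maps to $2\in\z/4\simeq\SL_2(\z/4)^{\mathrm{ab}}$. Now take $\ppp=(q)$ with $q\ge 5$, $q\ne p$. The class $[-I]\in H_1(\Gg_0(A,\ppp),\z)$ maps to $-1\ne 1$ in $\F_q^\times$. Yet $i_{1\ast}[-I]=i_{2\ast}[-I]\ne 0$, so $[-I]\notin\ker\alpha_1$.

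So your displayed formula for $\ker\alpha_1$ is not established. Moreover, Theorem \ref{Gg_0Glob} is only an abstract isomorphism, so ``the torus summand'' is not a well-defined subgroup. Your appeal to how the cited construction splits $i_{1\ast}$ is likewise unverified.

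The repair is to make the count you list as a cross-check the actual argument; this is all the paper's ``replacing it in the above exact sequence'' amounts to. Exactness of the tail together with Propositions \ref{H1CHAR0}, \ref{H1CHAR>0} gives $|\operatorname{im}\alpha_1|=|H_1(\SL_2(A),\z)|^2/|H_1(\SL_2(\OO_{K,S\cup\{\ppp\}}),\z)|$. Hence $|\ker\alpha_1|=|\kappa(\ppp)^\times|$ in both cases.

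\emph{Case $|\kappa(\ppp)|=3$.} The count is $2$, which already forces $\ker\alpha_1\simeq\F_3^\times$. Your unipotent bookkeeping in Step 3 is unnecessary.

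\emph{Case $|\kappa(\ppp)|\ge 4$.} You need the isomorphism type, and it comes from $i_{1\ast}$, not from the torus.
First, $i_{1\ast}$ is onto. Indeed, $\SL_2(A)$ is generated by elementary matrices, and conjugating by $w=\bigl(\begin{smallmatrix}0&1\\-1&0\end{smallmatrix}\bigr)$ gives $[E_{21}(c)]=[E_{12}(-c)]$. So the classes $[E_{12}(b)]$ generate $H_1(\SL_2(A),\z)$.
Therefore $|\ker i_{1\ast}|=|\kappa(\ppp)^\times|=|\ker\alpha_1|$. Since $\ker\alpha_1\subseteq\ker i_{1\ast}$, this gives $\ker\alpha_1=\ker i_{1\ast}$.
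Finally, let $\rho$ be the $(1,1)$-entry mod $\ppp$; it is onto because $\Gg_0(A,\ppp)\to B(\kappa(\ppp))$ is onto. The map $(\rho,i_{1\ast})\colon H_1(\Gg_0(A,\ppp),\z)\to\kappa(\ppp)^\times\oplus H_1(\SL_2(A),\z)$ is then surjective, because the $E_{12}(b)$ lie in $\ker\rho$. By Theorem \ref{Gg_0Glob}(i) and counting, it is bijective. Hence $\rho$ maps $\ker\alpha_1=\ker i_{1\ast}$ isomorphically onto $\kappa(\ppp)^\times$.
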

\begin{proof}
    We split the proof in two cases: when $|\kappa(\ppp)|> 3$ and when $|\kappa(\ppp)|=3$. 
    \vspace{0.3cm}

    (i) $|\kappa(\ppp)|> 3$: 
    \vspace{0.3cm}
    
    By Propositions \ref{H1CHAR0} and \ref{H1CHAR>0}, $H_1(\SL_2(\OO_{K,S\cup\{\ppp\}}), \z)\simeq H_1(\SL_2(\OO_{K,S}), \z)$. Combining with Proposition \ref{surj}, the Mayer-Vietoris exact sequence boils down to (integral coefficients are omitted):
    \[
        H_2(\SL_2(\OO_{K,S})) \rightarrow H_2(\SL_2(\OO_{K,S\cup\{\ppp\}})) \rightarrow H_1(\Gamma_0(\OO_{K,S}, \ppp)) \rightarrow H_1(\SL_2(\OO_{K,S})) \rightarrow 0.
    \]
     Now, from Theorem \ref{Gg_0Glob} (i), $H_1(\Gamma_0(\OO_{K, S}, \ppp), \z)\simeq H_1(\SL_2(\OO_{K,S}), \z) \oplus \kappa(\ppp)^{\times}$. Thus, replacing it in the above exact sequence, we get the required one:
    \[
       H_2(\SL_2(\mathcal{O}_{K,S}), \z) \rightarrow H_2(\SL_2(\mathcal{O}_{K,S\cup \{{\ppp}\}}), \z) \rightarrow \kappa(\ppp)^{\times} \rightarrow 0.
    \]

    (ii) $|\kappa(\ppp)|=3$: 
    \vspace{0.3cm}
    
    The proof follows in a similar vein. Propositions \ref{H1CHAR0} and \ref{H1CHAR>0} give the isomorphism $H_1(\SL_2(\OO_{K,S\cup \{\ppp\}}), \z)\simeq H_1(\SL_2(\OO_{K,S}), \z)\oplus \kappa(\ppp)$. Applying Proposition \ref{surj}, we obtain (integral coefficients again omitted):
    \[
            H_2(\SL_2(\OO_{K,S})) \rightarrow H_2(\SL_2(\OO_{K,S\cup\{\ppp\}})) \rightarrow H_1(\Gamma_0(\OO_{K,S}, \ppp)) 
      \rightarrow H_1(\SL_2(\OO_{K,S}))\oplus \kappa(\ppp) \rightarrow 0.
    \]
    
    Finally, Theorem \ref{Gg_0Glob} (ii) gives $H_1(\Gamma_0(\OO_{K,S}, \ppp),\z)\simeq H_1(\SL_2(\OO_{K, S}), \z)\oplus \kappa(\ppp) \oplus \kappa(\ppp)^\times$. Replacing it in the above exact sequence we get:
    \[
       H_2(\SL_2(\mathcal{O}_{K,S}), \z) \rightarrow H_2(\SL_2(\mathcal{O}_{K,S\cup \{{\ppp}\}}), \z) \rightarrow \kappa(\ppp)^{\times} \rightarrow 0.
    \]
\end{proof}

Inspired by the previous lemma and proceeding as in \cite[\S 6]{BBT2025-1}, we denote by $\delta_\ppp$ the surjective map  $H_2(\SL_2(\OO_{K,S\cup\{\ppp\}}), \z) \twoheadrightarrow \kappa(\ppp)^\times$. Given a finite set of places $T$ containing $S$ properly, this induces the following natural map:
\[
\delta_T: H_2(\SL_2(\OO_{K,S \cup T}), \z) \rightarrow \bigoplus_{\ppp \in T \backslash S} \kappa(\ppp)^\times, \ \text{ where } \ [X] \mapsto \bigoplus_{\ppp \in T\backslash S} \delta_{\ppp}([X]).
\]

We remark that this map $\delta_{\ppp}$ is essentially the tame symbol of $K$-theory (see \cite[$\S$ 6]{BBT2025-1} and \cite[Theorem 5.17]{h2016}).

The next theorem is the main result of the present work and will be of great importance for understanding the structure $H_2(\SL_2(\OO_{K,S}), \z)$. 

\begin{thm} \label{Main} 
    If $2 \nmid |H_1(\SL_2(\OO_{K,S}), \z)|$ and $T$ is a set of primes of $K$ such that $T \supsetneq S $, then we have the following exact sequence: 
    \[
    H_2(\SL_2(\mathcal{O}_{K,S}), \z) \rightarrow H_2(\SL_2(\mathcal{O}_{K, T}), \z) \rightarrow \bigoplus_{\ppp \in T\backslash S} \kappa(\ppp)^{\times} \rightarrow 0.
    \]    
\end{thm}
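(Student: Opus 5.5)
Since $T\supsetneq S$ and the direct sum is over $T\setminus S$, I take $T$ finite (the infinite case will follow by passing to the direct limit over finite subsets, since homology and direct sums commute with filtered colimits). The plan is induction on $n=|T\setminus S|$, using Lemma \ref{OK,S-OKS,P} as the one-prime step. The hypothesis $2\nmid |H_1(\SL_2(\OO_{K,S}),\z)|$ means, by Remark \ref{2inS}, that every prime with residue field $\F_2$ already lies in $S$. Hence every $\ppp\in T\setminus S$ has $|\kappa(\ppp)|\geq 3$, and the hypothesis persists for every intermediate set $S\subseteq S'\subseteq T$. So Lemma \ref{OK,S-OKS,P} applies at each stage, and the base case $n=1$ is exactly that lemma.

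For the inductive step, write $T=T'\cup\{\qqq\}$ with $S\subseteq T'\subsetneq T$. By induction we have an exact sequence
\[
H_2(\SL_2(\OO_{K,S}),\z)\to H_2(\SL_2(\OO_{K,T'}),\z)\xrightarrow{\delta_{T'}}\bigoplus_{\ppp\in T'\setminus S}\kappa(\ppp)^\times\to 0 .
\]
Lemma \ref{OK,S-OKS,P}, applied to $\OO_{K,T'}$ and $\qqq$, gives a second exact sequence
\[
H_2(\SL_2(\OO_{K,T'}),\z)\to H_2(\SL_2(\OO_{K,T}),\z)\xrightarrow{\delta_\qqq}\kappa(\qqq)^\times\to 0 .
\]
I would splice these two sequences into the desired sequence for $T$. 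This requires two compatibility facts about the maps $\delta_\ppp$, which I regard as tame symbols:
\begin{enumerate}
\item[(a)] for $\ppp\in T'\setminus S$, the composite $H_2(\SL_2(\OO_{K,T'}))\to H_2(\SL_2(\OO_{K,T}))\xrightarrow{\delta_\ppp}\kappa(\ppp)^\times$ equals $\delta_\ppp$ computed at level $T'$;
\item[(b)] $\delta_\qqq$ vanishes on the image of $H_2(\SL_2(\OO_{K,T'}))$.
\end{enumerate}
Fact (b) is just exactness of the second sequence. Granting (a), the argument is a routine diagram chase.
\begin{itemize}
\item \emph{Surjectivity of $\delta_T$.} Lift the $\qqq$-component through $\delta_\qqq$. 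Correct the remaining components by adding images of classes from level $T'$. This uses (a) for these components and (b) to leave the $\qqq$-component unchanged.
\item \emph{Exactness in the middle.} Suppose $\delta_T(x)=0$. Then $\delta_\qqq(x)=0$, so $x$ is the image of some $y$ at level $T'$. By (a), $\delta_{T'}(y)=0$, so by induction $y$ comes from level $S$.
\item \emph{Composite is zero.} The composite of the two maps in the sequence vanishes by (a) and (b).
\end{itemize}

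The main obstacle is (a): showing that $\delta_\ppp$ is well defined independently of the intermediate ring through which it is computed. The map $\delta_\ppp$ is defined as the connecting map in the Mayer--Vietoris sequence of the amalgam in Theorem \ref{amalg}, followed by projection onto the $\kappa(\ppp)^\times$ summand of $H_1(\Gamma_0)$ from Theorem \ref{Gg_0Glob}. I would first check naturality of the amalgam decomposition under localization. The amalgam for adjoining $\ppp$ to $T'$ maps into the amalgam for adjoining $\ppp$ to $T'\cup\{\qqq\}$, because the tree of $\SL_2$ at $\ppp$ is the same and the vertex stabilizers only enlarge. It then remains to check that the induced map $H_1(\Gamma_0(\OO_{K,T'},\ppp))\to H_1(\Gamma_0(\OO_{K,T'\cup\{\qqq\}},\ppp))$ is the identity on the $\kappa(\ppp)^\times$ summand. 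That summand is detected by the diagonal entry modulo $\ppp$, so this looks plausible.

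This naturality lets me reorder the primes freely, so every $\delta_\ppp$ can be defined at the top level $T$, adjoining $\ppp$ last. If the direct naturality check gets messy, the fallback is to identify $\delta_\ppp$ with the tame symbol after composing to $H_2(\SL_2(K),\z)\to K_2(K)$, as remarked after Lemma \ref{OK,S-OKS,P}. Naturality of the tame symbol is standard.
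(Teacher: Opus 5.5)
Your proposal is correct and follows the paper's proof. You use the same induction on $|T\setminus S|$, with Lemma \ref{OK,S-OKS,P} as both the base case and the one-prime step, and your explicit diagram chase is the same argument the paper gets by applying the Snake lemma to its two-row diagram. The compatibility (a) you isolate is exactly what the paper needs for that diagram to commute; the paper takes it for granted, remarking only that $\delta_\ppp$ is essentially the tame symbol, so your naturality-of-the-amalgam sketch supplies a detail the paper leaves implicit rather than changing the route.
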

\begin{proof}
   We proceed by induction on $|T \backslash S|$. The base case $|T \backslash S|=1$ is Lemma \ref{OK,S-OKS,P}. Now, let $|T \backslash S|>1$. Given $\qqq \in T\backslash S$, we consider $T'=T \backslash \{\qqq\}$. By Lemma \ref{OK,S-OKS,P}, we obtain an exact sequence: 
    \[
    H_2(\SL_2(\mathcal{O}_{K, T'}), \z) \rightarrow H_2(\SL_2(\mathcal{O}_{K,T}), \z) \rightarrow \kappa(\qqq)^\times \rightarrow 0.
    \]
    Since $|T' \backslash S|< |T \backslash S|$, the induction hypothesis yields an exact sequence:
    \[
     H_2(\SL_2(\mathcal{O}_{K,S}), \z) \overset{\gamma}{\rightarrow} H_2(\SL_2(\mathcal{O}_{K, T^\prime}), \z) \overset{\delta_{T^{\prime}}}{\rightarrow} \bigoplus_{\ppp \in T^\prime\backslash S} \kappa(\ppp)^{\times} \rightarrow 0.
    \]    
    Now, by applying the Snake lemma to the following commutative diagram with exact rows:
    \[
\begin{tikzcd}
    & \mathrm{H}_2(\mathrm{SL}_2(\mathcal{O}_{K,T'}), \mathbb{Z}) \arrow[r] \arrow[d, "\delta_{T'}" , two heads] 
    & \mathrm{H}_2(\mathrm{SL}_2(\mathcal{O}_{K,T}), \mathbb{Z}) \arrow[r] \arrow[d, "\delta_T"] 
    & k(\mathfrak{q})^\times \arrow[r] \arrow[d, "\mathrm{id}"] 
    & 0 \\
  0 \arrow[r] 
    & \displaystyle\bigoplus_{\mathfrak{p}\in T'\backslash S} k(\mathfrak{p})^\times \arrow[r] 
    & \displaystyle\bigoplus_{\mathfrak{p}\in T \backslash S} k(\mathfrak{p})^\times \arrow[r] 
    & k(\mathfrak{q})^\times \arrow[r] 
    & 0,
\end{tikzcd}
\]  
we get that $\delta_T$ and $(\ker \delta_{T^{\prime}} \arr \ker \delta_{T})$ are surjective. Since $\ker \delta_{T^{\prime}} = \im \gamma$, $H_2(\SL_2(\mathcal{O}_{K,S}), \z)$ surjects onto $\ker \delta_{T}$, from which the result follows.
\end{proof}

\begin{rem}
    This exact sequence cannot be obtained in general without the hypothesis of all inert primes over $2$ lying on $S$. For this, see Remark \ref{Sharpness} in next section. 
\end{rem}

\begin{cor}
\label{OKS_OKK}
   If $2 \nmid |H_1(\SL_2(\OO_{K,S}), \z)|$, then there is a natural exact sequence: 
    \[
    H_2(\SL_2(\mathcal{O}_{K,S}),\z) \rightarrow H_2(\SL_2(K),\z) \rightarrow \bigoplus_{\ppp \notin S} \kappa(\ppp)^{\times} \rightarrow 0.
    \]
\end{cor}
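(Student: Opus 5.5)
The plan is to pass to the direct limit over all finite sets of primes $T \supsetneq S$, using Theorem \ref{Main} at each finite stage. First, $K$ is the filtered union of the rings $\OO_{K,T}$ as $T$ runs over finite sets of primes containing $S$, since every element of $K^\times$ has nonzero valuation at only finitely many primes. Consequently $\SL_2(K)=\varinjlim_T \SL_2(\OO_{K,T})$ as a directed union of subgroups. Because group homology commutes with filtered colimits (the bar complex does), we get
\[
H_2(\SL_2(K),\z)\simeq \varinjlim_{T} H_2(\SL_2(\OO_{K,T}),\z).
\]
Similarly, $\bigoplus_{\ppp\notin S}\kappa(\ppp)^\times=\varinjlim_T \bigoplus_{\ppp\in T\backslash S}\kappa(\ppp)^\times$, where the transition maps are the inclusions of summands.

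Next, for $T\subseteq T'$ (both finite and properly containing $S$), I need compatibility of the sequences from Theorem \ref{Main}. The map $H_2(\SL_2(\OO_{K,S}),\z)\to H_2(\SL_2(\OO_{K,T}),\z)\to H_2(\SL_2(\OO_{K,T'}),\z)$ is induced by inclusions, so it is clearly compatible. The harder point is naturality of $\delta$: I must show that the square with $\delta_{T}$, $\delta_{T'}$ and the summand inclusion commutes. Concretely, for $\ppp\in T\backslash S$ the composite $H_2(\SL_2(\OO_{K,T}))\to H_2(\SL_2(\OO_{K,T'}))\xrightarrow{\delta_\ppp}\kappa(\ppp)^\times$ should agree with $\delta_\ppp$ on $H_2(\SL_2(\OO_{K,T}))$. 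In addition, for $\ppp\in T'\backslash T$, the map $\delta_\ppp$ should vanish on the image of $H_2(\SL_2(\OO_{K,T}))$.

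This naturality is the main obstacle. I would handle it via the remark that $\delta_\ppp$ is the tame symbol, i.e.\ it is the connecting map in the Mayer–Vietoris sequence of Theorem \ref{amalg} for the Bruhat–Tits tree at $\ppp$. The vanishing for $\ppp\notin T$ holds because $\SL_2(\OO_{K,T})$ lies in a vertex stabilizer of the $\ppp$-tree, so its classes come from $H_2$ of a vertex group and die under the boundary map. Compatibility for $\ppp\in T$ follows from the naturality of the Mayer–Vietoris sequence with respect to the map of amalgams induced by $\OO_{K,T\backslash\{\ppp\}}\to\OO_{K,T'\backslash\{\ppp\}}$, because the edge groups $\Gamma_0$ map to $\Gamma_0$ and $H_1(\Gamma_0)\to\kappa(\ppp)^\times$ is the projection from Theorem \ref{Gg_0Glob}, which is natural since it is induced by reduction of the diagonal entry mod $\ppp$. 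If one prefers to define $\delta_\ppp$ on $H_2(\SL_2(\OO_{K,T}))$ through the single-prime Lemma \ref{OK,S-OKS,P}, one must first check that this agrees with the tree boundary for the full set $T$; I would verify this the same way, by naturality of the amalgam for $\ppp$.

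With compatibility in hand, the sequences of Theorem \ref{Main} form a directed system of exact sequences $H_2(\SL_2(\OO_{K,S}))\to H_2(\SL_2(\OO_{K,T}))\to\bigoplus_{T\backslash S}\kappa(\ppp)^\times\to 0$ with constant first term. Filtered colimits are exact, so the colimit sequence
\[
H_2(\SL_2(\OO_{K,S}),\z)\to H_2(\SL_2(K),\z)\to\bigoplus_{\ppp\notin S}\kappa(\ppp)^\times\to 0
\]
is exact, and it is natural by construction.
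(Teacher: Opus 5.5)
Your proposal is correct and follows the same route as the paper: take the colimit over finite $T \supsetneq S$ of the exact sequences of Theorem \ref{Main}, using $\SL_2(K)=\bigcup_T \SL_2(\OO_{K,T})$, the fact that homology commutes with filtered colimits, and exactness (or just right exactness) of the colimit. The paper's one-line proof takes for granted that the maps $\delta_T$ are compatible with the transition maps. Your Mayer--Vietoris argument is a sound way to supply this: $\delta_\ppp$ vanishes on classes coming from a vertex group when $\ppp \notin T$, and the amalgam at $\ppp$ is functorial when $\ppp \in T$, with $\delta_\ppp$ identified with the boundary map followed by reduction of the diagonal entry.
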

\begin{proof}
    Since $K = \underset{S\subset T}{\rm{colim \ }} \OO_{K,T}$, the result follows immediately from Theorem \ref{Main} by taking colimits (here, the functor $\rm colim$ is just union and is right exact).
\end{proof}


\section{The second homology of \texorpdfstring{$\SL_2(\OO_{K,S})$ and the group $K_2(\OO_{K,S})$}{Lg}}

Once we have an exact sequence relating $H_2(\SL_2(\OO_{K,S}), \z)$ and $H_2(\SL_2(K), \z)$ (Corollary \ref{OKS_OKK}), we can use results from Hutchinson \cite{h2016} and Matsumoto \cite{mat1969} to relate the first group to $K_2(\OO_{K,S})$, providing information about the structure of $H_2(\SL_2(\OO_{K,S}), \z)$. In particular, for $S$ large enough, we see that the torsion part of $H_2(\SL_2(\OO_{K,S}), \z)$ is controlled by $K_2(\OO_{K,S})$ and the free part is just $\z^r$, where $r$ is the number of real embeddings of $K$ (see Theorem \ref{torsionH_2} below). 

\subsection{The group \texorpdfstring{$K_2(2, \OO_{K,S})$}{K2(2, OK,S)}} 
We start by recalling relevant facts about the second unstable $K$-group of a commutative ring $A$, which is denoted by $K_2(2,A)$.  

\begin{dfn}
    Let $A$ be a commutative ring. The second Steinberg group of $A$, $\operatorname{St}_2(A)$, is defined as the free group generated by the letters $x_{ij}(\lambda), \lambda \in A, 1 \leq i \neq j \leq 2$, and $w_{ij}(u):=x_{ij}(u)x_{ji}(-u)x_{ij}(u)$, with $u \in A^\times$, subject to the following relations:     
    \begin{itemize}
        \item [(i)] $x_{ij}(s)x_{ij}(t)=x_{ij}(s+t)$;
        \item [(ii)] $w_{ij}(u)x_{ij}(t)w_{ij}(-u)=x_{ji}(-u^{-2}t)$.
    \end{itemize}
\end{dfn}

It is clear that there is a natural group homomorphism
\begin{align*}
    \phi: \operatorname{St}(2, A) \rightarrow \SL_2(A), \ \text{ with } \ x_{ij}(\lambda) \mapsto E_{ij}(\lambda),
\end{align*}
where $E_{ij}(\lambda)$ denotes an elementary matrix with $\lambda$ on the $(i, j)$ coordinate. This leads to the definition of $K_2(2, A)$.

\begin{dfn}
    The unstable $K_2$-group of degree 2 of $A$, denoted by $K_2(2, A)$, is defined to be the kernel of $\phi$.
\end{dfn}

Let $F$ be an infinite field. A classic result due to Matsumoto and Moore relates $H_2(\SL_2(F), \z)$ to $K_2(2, F)$.

\begin{thm}{\cite{mat1969, moore1968}}\label{matso1}
    Let $F$ be an infinite field.
    \begin{itemize}
        \item [(i)] The exact sequence
    \[
    1 \rightarrow K_2(2, F) \rightarrow \St(2, F) \rightarrow \SL_2(F) \rightarrow 1
    \]
    is a universal central extension of the group $\SL_2(F)$. In particular, 
    \[
    H_2(\SL_2(F), \z) \simeq K_2(2, F).
    \]
    \item [(ii)] The group $K_2(2, F)$ is the free group generated by symbols $c(u,v)$, with $u, v \in K^\times$, subject to the following relations: 
    \begin{itemize}
        \item [(a)] $c(u,v)=1$, whenever $u=1$ or $v=1$;
        \item [(b)] $c(u, v)=c(u^{-1}, v)$;
        \item [(c)] $c(u, vw)c(v, w)=c(uv, w) c(u, v)$, with $w\in K^\times$;
        \item [(d)] $c(u,v) = c(u, -uv)$;
        \item [(e)] $c(u, v)=c(u, (1-u)v)$, with $u\neq 1$. 
    \end{itemize}
    \end{itemize}
\end{thm}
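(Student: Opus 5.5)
The plan is to follow the classical Matsumoto–Moore strategy: first show that $\St(2,F)$ is a universal central extension of $\SL_2(F)$, then read off a presentation of the kernel from the Bruhat decomposition.

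\textbf{Step 1: notation.} Inside $\St(2,F)$, set $h_{ij}(u):=w_{ij}(u)w_{ij}(-1)$ for $u\in F^\times$. Let $H\subseteq\St(2,F)$ be the subgroup generated by the $h_{12}(u)$. From relation (ii) one derives the conjugation formula
\[
h_{12}(u)\,x_{12}(t)\,h_{12}(u)^{-1}=x_{12}(u^2t),
\]
and similarly for $x_{21}$ with $u^{-2}$.

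\textbf{Step 2: $\St(2,F)$ is perfect.} The conjugation formula gives
\[
[h_{12}(u),x_{12}(s)]=x_{12}((u^2-1)s).
\]
Since $F$ is infinite, we can choose $u$ with $u^2\neq 1$, so every $x_{ij}(t)$ is a commutator and $\St(2,F)$ is perfect.

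\textbf{Step 3: the extension is central.} Use the Bruhat decomposition of $\SL_2(F)$. Every element of $\St(2,F)$ can be written as $x_{12}(a)h_{12}(u)$ or as $x_{12}(a)h_{12}(u)w_{12}(1)x_{12}(b)$. This follows because the set of such products is stable under right multiplication by the generators $x_{12}(t)$ and $x_{21}(t)$, which one checks using relation (ii).
\begin{itemize}
\item Comparing images in $\SL_2(F)$ (upper triangular versus not) shows that a kernel element must be of the first type, with $a=0$ and $\phi(h_{12}(u))=1$.
\item Hence $K_2(2,F)\subseteq H$.
\item The conjugation action of $h_{12}(u)$ on each $x_{ij}(t)$ depends only on $u^{\pm2}$, that is, only on the image in $\SL_2(F)$. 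So an element of $H$ mapping to $1$ acts trivially on all generators and is therefore central.
\end{itemize}

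\textbf{Step 4: universality.} Show that every central extension $1\to C\to E\overset{p}{\to}\St(2,F)\to 1$ splits. Define
\[
\hat x_{ij}\bigl((u^2-1)s\bigr):=[\hat h,\hat x],
\]
where $\hat h,\hat x$ are arbitrary lifts of $h_{ij}(u)$ and $x_{ij}(s)$. Since $C$ is central, this commutator does not depend on the choice of lifts, and an argument in the spirit of Steinberg's shows it does not depend on the choice of $(u,s)$. Then verify that the elements $\hat x_{ij}(t)$ satisfy relations (i) and (ii), which gives a section $\St(2,F)\to E$. A perfect group all of whose central extensions split is its own universal central extension. By the standard criterion, $H_2(\St(2,F),\z)=0$, and hence $H_2(\SL_2(F),\z)\simeq K_2(2,F)$.

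\textbf{Step 5: the presentation in part (ii).} Put $c(u,v):=h_{12}(u)h_{12}(v)h_{12}(uv)^{-1}$.
\begin{itemize}
\item By Step 3, $K_2(2,F)$ is generated by products of $h$'s lying in the kernel, and these are products of the $c(u,v)$.
\item Relation (a) is immediate from the definition.
\item Relation (c) is the cocycle identity; it comes from associativity applied to $h(u)h(v)h(w)$.
\item Relations (b) and (d) come from rewriting $w_{12}(u)$ and $w_{12}(-u)$ via relation (ii).
\item Relation (e) comes from computing $w_{12}(u)$ with the decomposition $x_{12}(u)x_{21}(-u^{-1})x_{12}(u)$ and substituting $1-u$ into the Bruhat normal form.
\end{itemize}

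\textbf{Main obstacle: sufficiency of (a)--(e).} I expect the hardest part to be proving that (a)--(e) are the \emph{only} relations. For this I will follow Matsumoto's construction.
\begin{itemize}
\item Let $C$ be the abstract group defined by the presentation (a)--(e).
\item Build a central extension $\widetilde G$ of $\SL_2(F)$ by $C$. Its elements are Bruhat normal forms $(x(a),\,u,\,\epsilon,\,x(b);\,c)$ with $c\in C$, and the multiplication is defined case by case on the Bruhat cells, using $c(\cdot,\cdot)$ as the cocycle.
\item The core verification is that this multiplication is associative. Relations (c)--(e) are exactly what associativity needs in the big cell.
\end{itemize}
Granting associativity, the elements $(x_{ij}(t);1)$ satisfy the Steinberg relations. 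This gives a homomorphism $\St(2,F)\to\widetilde G$ sending $c(u,v)$ to the generator $c(u,v)\in C$. Composing with the surjection $C\to K_2(2,F)$ shows that $C\to K_2(2,F)$ is injective, hence an isomorphism. The remaining Steinberg relations are routine.
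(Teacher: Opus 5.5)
The paper gives no argument of its own here (it only cites Matsumoto and Moore), and your outline is the classical route. But its decisive step is not actually argued: what you defer in Step 4 to ``an argument in the spirit of Steinberg's'' is precisely where infiniteness of $F$ has to be used.

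\emph{The counterexample.} Your only use of infiniteness is to find $u$ with $u^2\neq 1$, and such $u$ also exist in $\F_4$. Yet $\St(2,\F_4)$ is not centrally closed. Pulling back $\SL_2(\F_5)\to\SL_2(\F_4)\cong A_5$ along $\St(2,\F_4)\to\SL_2(\F_4)$ gives a central extension of $\St(2,\F_4)$ that cannot split. A splitting would embed $x_{12}(\F_4)\cong(\z/2)^2$ into the preimage of a Klein four-group, which is $Q_8$, and $Q_8$ has only one involution.

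\emph{The missing idea.} By $[\hat h,\hat y\hat y']=[\hat h,\hat y]\,\hat y[\hat h,\hat y']\hat y^{-1}$, relation (i) for your lifts holds if and only if $p^{-1}(\{x_{ij}(t)\})$ is abelian. Equivalently, the pairing $\beta(s,t)=[\tilde x_{ij}(s),\tilde x_{ij}(t)]\in C$ must vanish. A priori $\beta$ is only biadditive with $\beta(u^2s,u^2t)=\beta(s,t)$. Hence it kills the ideal of $F\otimes_{\z}F$ generated by the elements $c\otimes c-1$, $c\in F^{\times 2}$, and you need this ideal to be the unit ideal. If it were not, a maximal ideal containing it would give embeddings $\sigma,\tau$ of $F$ into a field with $\sigma(a)\tau(a)=\pm 1$ for all $a\neq 0$. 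Then $\tau(a+1)=\tau(a)+1$ forces $\sigma(a)$, for every $a\neq 0,-1$, to be a root of one of four monic quadratics, which is impossible for infinite $F$. For $\F_4$ it fails: $\sigma=\mathrm{id}$, $\tau=$ Frobenius satisfy $c\cdot c^2=1$.

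Once these preimages are known to be abelian, independence of $(u,s)$ and relation (ii) do follow formally, using equivariance under conjugation by lifts of $h_{12}(v)$. But first replace $E$ by $[E,E]$, which is then central over $\SL_2(F)$. Otherwise lifts of $h_{12}(u)$ and $h_{12}(u')$ commute only modulo $p^{-1}(K_2(2,F))$, not modulo $C$, and your conjugation manipulations pick up uncontrolled factors.

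There are three further problems. First, relation (b) as printed, $c(u,v)=c(u^{-1},v)$, is false, so it cannot ``come from rewriting $w_{12}(u)$.'' Under $K_2(2,F)\to K_2(F)$ it would give $\{u,v\}^2=1$, whereas for $F=\q$ the tame symbol $\tau_5(\{5,2\})=2$ has order $4$ in $\F_5^\times$. Matsumoto's relation is $c(u,v)=c(u^{-1},v^{-1})$, and that is the version your Step 5 and your cocycle must use.

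Second, $C$ should be the \emph{abelian} group with these generators and relations, since a central extension by $C$ requires it. The associativity check you ``grant'' is the substance of Matsumoto's theorem, so part (ii) is still a plan rather than a proof.

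Third, in Step 3 the normal forms must be $x_{12}(a)h$ and $x_{12}(a)h\,w_{12}(1)x_{12}(b)$ with $h$ ranging over the subgroup $H$, not over single generators $h_{12}(u)$. As literally written, your argument would prove $K_2(2,F)=1$.
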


Matsumoto also describes the second stable $K$-group of any field $F$, $K_2(F)$.

\begin{thm} {\cite[$\S$ 12]{milnor1971}} \label{matso2}
    Let $F$ be a field. Then $K_2(F)$ is isomorphic to the free group generated by symbols $\{x, y\}, x, y \in F^\times$, subject to the following relations: 
    \begin{itemize}
        \item [(i)] $\{x, 1-x\}=1, x \neq 1$;
        \item [(ii)] $\{x_1x_2,y\}=\{x_1, y\}\{x_2, y\}$; 
        \item [(iii)] $\{x, y_1y_2\}=\{x, y_1\}\{x, y_2\}$.
    \end{itemize}
\end{thm}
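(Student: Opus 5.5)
The plan is to follow Matsumoto's original strategy, as presented in \cite[\S 11--12]{milnor1971}. Let $A(F)$ be the abstract group given by generators $\{x,y\}$ and relations (i)--(iii). I want to show that the canonical map $A(F) \arr K_2(F)$, sending $\{x,y\}$ to the Steinberg symbol, is well defined and bijective.

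\textbf{Step 1: the map is well defined.} Work in the stable Steinberg group $\St(F)$, where $K_2(F) = \ker(\St(F) \arr E(F))$. Set
\[
h_{ij}(u) = w_{ij}(u)\,w_{ij}(-1), \qquad \{u,v\} = h_{12}(uv)\,h_{12}(u)^{-1}\,h_{12}(v)^{-1}.
\]
\begin{itemize}
\item Since $\{u,v\}$ maps to the identity matrix, it lies in $K_2(F)$, which is central in $\St(F)$.
\item Bimultiplicativity follows by conjugating $h_{12}(v)$ by $h_{13}(u)$. Here the stable range is essential: having a third index lets one move between $h_{12}$ and $h_{13}$ and commute elements supported in different blocks.
\item The Steinberg relation $\{x,1-x\}=1$ follows from an explicit identity expressing $w_{12}(x)$ through $w_{12}(1)$ and $w_{12}(1-x)$, using relation (ii) of the Steinberg group.
\end{itemize}
This yields a homomorphism $A(F) \arr K_2(F)$.

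\textbf{Step 2: surjectivity.} Use the Bruhat decomposition of $\SL_n(F)$. Any element of $\St(F)$ can be written as $u\,h\,w\,u'$, with $u,u'$ in the image of the unipotent upper triangular subgroup (which maps injectively to $E(F)$), $w$ a lift of a permutation matrix, and $h$ in the subgroup $\tilde H$ generated by the $h_{ij}(u)$. If such an element maps to $1$, uniqueness of the Bruhat decomposition forces $u=u'=1$, $w$ trivial, and $h \in \tilde H \cap K_2(F)$. A computation in $\tilde H$, using the formulas $h_{ij}(u)h_{ij}(v) = \{u,v\}\,h_{ij}(uv)$ and the relations between different $h_{ij}$, then shows that $\tilde H \cap K_2(F)$ is generated by symbols.

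\textbf{Step 3: injectivity (the main obstacle).} Here one must construct a central extension $1 \arr A(F) \arr E \arr \SL(F) \arr 1$ (working with $\SL_n$ for $n \ge 3$) in which the symbol classes restrict correctly. Matsumoto builds this in three stages:
\begin{itemize}
\item first, a $2$-cocycle on the diagonal group $H \simeq (F^\times)^{n-1}$ with values in $A(F)$, given by $c(\mathrm{diag}(a_i),\mathrm{diag}(b_i)) = \prod_{i<j}\{a_i,b_j\}$ suitably normalized;
\item second, an extension of this to the monomial group $N$, which requires checking Weyl group compatibility via relations (i)--(iii), notably $\{x,-x\}=1$ derived from (i);
\item third, an extension to all of $\SL_n(F)$ by letting $\SL_n(F)$ act on $A(F)\times \SL_n(F)$ through Bruhat cells, in the style of a Tits-system (BN-pair) construction.
\end{itemize}
Verifying that this action is associative, with the case analysis of $w_\alpha\cdot (\text{cell})$ products, is the laborious core of the proof; I expect this verification to be the hardest step.

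\textbf{Concluding the argument.} Since $\St(F)$ is the universal central extension of $E(F)=\SL(F)$, there is a map $\St(F) \arr E$ over $\SL(F)$. It restricts to $K_2(F) \arr A(F)$ and sends $\{u,v\} \mapsto \{u,v\}$, so it is inverse to the map of Step 1.

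For finite $F$, note that both sides are trivial. For $K_2(F)$ this follows from Steinberg's computation; for $A(F)$ it is a direct check, since $F^\times$ is cyclic and $\{g,g\}=\{g,-1\}$ is killed using a Steinberg relation. This gives a consistency check on the finite case. For infinite $F$, an alternative route would start from Theorem~\ref{matso1}(ii) together with stability $K_2(2,F)\two K_2(F)$, but identifying the kernel of that map seems no easier than the direct approach above.
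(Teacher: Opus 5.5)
Your outline is the Matsumoto--Milnor argument to which the paper defers; the paper gives no proof of its own, only the reference to Milnor's book, \S 12. As there, you define symbols via $h_{12}$ and get bimultiplicativity by conjugating with $h_{13}$, obtain generation of $K_2(F)$ by symbols from the Bruhat decomposition in the Steinberg group, and prove injectivity with a central extension by the presented group built from a torus cocycle, so the approach is essentially the same. The details you leave implicit are routine: the presented group is automatically abelian (expanding $\{aa',bb'\}$ in the two possible orders shows any two generators commute), which your cocycle and central-extension language needs, and the extensions of $\SL_n(F)$ must be built compatibly in $n$ before invoking universality of $\St(F)$; alternatively, use universality of $\St(n,F)$ for $n\ge 5$, where the resulting maps $K_2(n,F)\to A(F)$ are automatically compatible because each $K_2(n,F)$ is generated by symbols.
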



\begin{cor}
\label{K2(2K)K2(K)}
    For any field $F$, the following homomorphism of groups is surjective: 
    \begin{align*}
    \psi: K_2(2, F) \rightarrow K_2(F), \ \text{ with } \ c(u,v) \mapsto \{u,v\}.
\end{align*}
\end{cor}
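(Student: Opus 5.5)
The map $\psi$ should first be shown to be a well-defined homomorphism, and then surjective; surjectivity will be the easy part. I would define $\psi$ on the free group generated by the symbols $c(u,v)$, $u,v\in F^\times$, by sending $c(u,v)\mapsto\{u,v\}$. It then suffices to check that each relation (a)--(e) of Theorem \ref{matso1}(ii) becomes an identity in $K_2(F)$ under the presentation of Theorem \ref{matso2}. For finite $F$ both groups are trivial, since $K_2$ of a finite field vanishes and $\SL_2(F)$ has trivial Schur multiplier except in small cases. So I would restrict attention to infinite $F$, where Theorem \ref{matso1}(ii) applies; for finite $F$ the statement is immediate once the map is defined through Steinberg groups instead.

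The relation checks go as follows, using the standard consequences of the Steinberg relations in $K_2(F)$: bimultiplicativity, $\{x,-x\}=1$, and skew-symmetry $\{x,y\}=\{y,x\}^{-1}$, all derived from Theorem \ref{matso2} as in \cite[\S 9]{milnor1971}.
\begin{itemize}
\item[(a)] We have $\{1,v\}=\{u,1\}=1$ by bilinearity.
\item[(c)] Both sides map to $\{u,v\}\{u,w\}\{v,w\}$, by bilinearity and commutativity of the target.
\item[(d)] We compute $\{u,-uv\}=\{u,-u\}\{u,v\}=\{u,v\}$.
\item[(e)] We compute $\{u,(1-u)v\}=\{u,1-u\}\{u,v\}=\{u,v\}$.
\end{itemize}

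Relation (b) is the main obstacle. Under the naive assignment it would require $\{u^{-1},v\}=\{u,v\}$, i.e.\ $\{u,v\}^2=1$, which is false in general. So the assignment $c(u,v)\mapsto\{u,v\}$ must be read with Matsumoto's normalization. I would therefore define $\psi$ instead as the map induced on kernels by the stabilization $\St(2,F)\to\St(F)$, which lies over $\SL_2(F)\to\SL(F)$. This map sends the Steinberg-group element $c(u,v)=h(uv)h(u)^{-1}h(v)^{-1}$ to the symbol $\{u,v\}$ up to the sign convention fixed in \cite[\S 9]{milnor1971}. With this definition well-definedness is automatic, and relation (b) is then a statement about the unstable symbols that need not hold for the stable ones in the form written.

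For surjectivity, $K_2(F)$ is generated by the symbols $\{x,y\}$ by Theorem \ref{matso2}. Each symbol $\{x,y\}$ equals $\psi(c(x,y))$, and its inverse $\{x,y\}^{-1}$ equals $\psi(c(x,y)^{-1})$, which is again in the image. Since $\psi$ is a homomorphism, its image contains all generators and so is all of $K_2(F)$.
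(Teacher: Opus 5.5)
Your final argument is correct, but your diagnosis of relation (b) is wrong, and that error is what forces the detour. The claim that (b) ``need not hold for the stable ones in the form written'' cannot be right, because a homomorphism carries every relation valid in its source to a relation in its target. Your stabilization map $K_2(2,F)\to K_2(F)$ sends $c(u,v)$ to $\{u,v\}^{\pm 1}$. So if $c(u,v)=c(u^{-1},v)$ really held in $K_2(2,F)$, you would get $\{u,v\}^2=1$ in $K_2(F)$, which is false: for instance $\tau_7(\{2,7\})=2^{-1}$ has order $3$ in $\F_7^\times$. Your own construction therefore shows that (b) as printed is not a relation of $K_2(2,F)$ at all. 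It is a misprint for Matsumoto's relation $c(u,v)=c(v^{-1},u)$. Under $\psi$ the correct relation goes to $\{v^{-1},u\}=\{v,u\}^{-1}=\{u,v\}$, by bimultiplicativity and skew-symmetry. So your first approach goes through: checking (a)--(e) against the presentation of Theorem \ref{matso2}, which you did correctly for (a), (c), (d), (e). Surjectivity then follows because symbols generate $K_2(F)$. This comparison of the two Matsumoto presentations is exactly the argument the paper intends; it states the corollary without proof as an immediate consequence of Theorems \ref{matso1} and \ref{matso2}.

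As an alternative route, your stabilization argument stands on its own. The map $\St(2,F)\to\St(F)$ lies over $\SL_2(F)\to\SL(F)$, so it restricts to kernels. Matsumoto's generator goes to the Steinberg symbol via the identity $\{u,v\}=h_{12}(uv)h_{12}(u)^{-1}h_{12}(v)^{-1}$ in $\St(F)$ (Milnor, \S 9). A convention mismatch can only replace $\{u,v\}$ by $\{u,v\}^{-1}$, and composing with inversion on the abelian group $K_2(F)$ undoes that. Compared with the presentation check, this route has three advantages:
\begin{itemize}
\item well-definedness is automatic;
\item it identifies $\psi$ with the natural stabilization map;
\item it handles finite fields uniformly, where Theorem \ref{matso1}(ii) is not stated.
\end{itemize}
The cost is that you need to know how the abstract generators $c(u,v)$ sit inside $\St(2,F)$. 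For finite $F$ you only need $K_2(F)=0$. Your remark that $K_2(2,F)$ vanishes because $\SL_2(F)$ has trivial Schur multiplier should be dropped: it conflates $K_2(2,F)$ with $H_2(\SL_2(F),\z)$, and the latter is nonzero for $F=\F_4$ and $F=\F_9$.
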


The group $K_2(2, F)$, $F$ an infinite field, is closely connected to the $K$-theory of quadratic forms. In this case, it is well established that $K_2(2, F) \simeq K_2^{MW}(F)$, the second Milnor-Witt $K$-group \cite[Proposition 3.12]{h2016}. Moreover, it fits into an exact sequence of the form
\[ 0 \rightarrow I^{3}(F) \rightarrow K_2(2, F) \rightarrow K_2(F) \rightarrow 0,\]
where $I(F)$ is the fundamental ideal associated to the Witt ring $W(F)$ \cite[Corollary 3.10]{h2016}. 

\begin{exa} \label{Z1/m}
    For the case of rational numbers, $K_2^{MW}(\q)$ is known to be $\z \oplus \underset{p \ \text{prime}}{\bigoplus} \F_p^{\times}$ \cite[Proposition 2.5]{kolderup22}. Using this information in the exact sequence of Corollary \ref{OKS_OKK} for $\OO_{K,S} = \z[1/2n]$, where $n$ is a positive odd integer, we have 
    \begin{equation}\label{seq-Q}
    H_2(\SL_2(\z[1/2n],\z) \rightarrow H_2(\SL_2(\q),\z) \rightarrow \bigoplus_{p \nmid n\text{, prime}} \F_p^{\times} \rightarrow 0,
    \end{equation}
    which becomes
    \[
    H_2(\SL_2(\z[1/2n],\z) \rightarrow \z \oplus \bigoplus_{p \ \text{prime}} \F_p^{\times} \rightarrow \bigoplus_{p \nmid n\text{, prime}} \F_p^{\times} \rightarrow 0.
    \]
    Note that, by \cite[Theorem 6.8]{BBT2025-1},
    \[H_2(\SL_2(\z[1/2n], \z) \simeq \z \oplus \bigoplus_{p \mid n\text{, prime}}\F_p^{\times}.\]
    In this case, sequence \ref{seq-Q} is actually short exact, motivating us to ask in what other cases this is true. As we will see in the next subsection, this question is associated to the study of the group $\tilde{K}_2(2, \OO_{K,S})$, defined by Hutchinson \cite[$\S$ 6]{h2016}.

    
    

\end{exa}

\begin{rem}
    With our finds and knowing that $H_2(\SL_2(\q), \z)$ is an abelian group of rank $1$ \cite[Remark 6.7]{h2016}, we can deduce that $H_2(\SL_2(\q), \z) \simeq  \z \oplus \underset{p \ \text{prime}}{\bigoplus} \F_p^{\times}$, recovering Kolderup's description of $K_2^{MW}(\q)$ \cite[Proposition 2.5]{kolderup22}. Indeed, we can write $H_2(\SL_2(\q),\z) \simeq \z \oplus H$, for some abelian group $H$ formed by a direct sum of divisible and/or reduced groups \cite[Chap. IV]{fuchs1970}. Thus, for $n=1$, sequence \ref{seq-Q} is short exact:
    \[
    0 \rightarrow \z \rightarrow \z \oplus H \rightarrow \bigoplus_{p \text{ prime}} \F_p^{\times} \rightarrow 0,
    \]
    and hence $\frac{\z \oplus H}{\z} \simeq \bigoplus_{p \text{ prime}} \F_p^{\times}$, i.e. $H \simeq \bigoplus_{p \text{ prime}} \F_p^{\times}$. 
\end{rem}

\subsection{The group \texorpdfstring{$\tilde{K}_2(2,\OO_{K,S})$}{Lg}} \label{Section 3.2}


This section is dedicated to presenting $\tilde{K}_2(2, \OO_{K,S})$, a subgroup of $K_2(2, K)$ closely related to $K_2(\OO_{K,S})$. This group is built using the so called \textit{tame symbol} $\tau_\ppp$ of $K$-theory \cite[Lemma 6.3, Chap. 3]{wei2013}. Given a prime ideal $\ppp$ of the ring of integers of a global field $K$, we define
\[
    \tau_\ppp: K_2(K) \rightarrow \kappa(\ppp)^\times, \ \text{ with } \ \{x, y\} \mapsto (-1)^{v_\ppp(x)v_\ppp(y)}x^{-v_\ppp(y)}y^{v_\ppp(x)} \mod \ppp.
\]
Since the tame symbol is surjective, by composing $\tau_{\ppp}$ with the map $\psi$ of Corollary \ref{K2(2K)K2(K)}, we obtain a surjective homomorphism
\[
T_\ppp:= \tau_{\ppp} \circ \psi : K_2(2, K) \rightarrow \kappa(\ppp)^\times.
\]
Thus, for a finite nonempty set $S$ of primes in $K$, there is an induced map
   \begin{align*}
        T_S: K_2(2, K) \rightarrow \bigoplus_{\ppp \notin S} \kappa(\ppp)^\times, \ \text{ with } \ c(u,v) \mapsto \prod_{\ppp \notin S} \tau_{\ppp} (\{u, v\}).
    \end{align*}
\begin{dfn}
    The group $\tilde{K}_2(2, \OO_{K,S})$ is the kernel of the map $T_S$.
\end{dfn}

Let $\Omega$ denote the set of all real embeddings of $K$. Given $\sigma \in \Omega$, consider the following homomorphism:
\begin{align}
\label{K2KF2}
    T_{\sigma}: K_2(K) \rightarrow \F_2, \ \text{ with } \
    \{x, y\}&\mapsto \begin{cases}
        1,  &\text{ if } \operatorname{sign}(\sigma(x)), \operatorname{sign}(\sigma(y))<0\\
        0, &\text{ otherwise} 
    \end{cases}.
\end{align}
Thus, if $r$ is the number of real embeddings of $K$, we can consider the induced homomorphism:
\[
T_{\Omega}:=\oplus_{\sigma \in \Omega} T_{\sigma}: K_2(K) \rightarrow (\F_2)^r.
\]
\begin{dfn}
    We define $K_2(K)_+$ as the kernel of the map $T_{\Omega}$, which is the so called \textit{group of totally positive elements of} $K_2(K)$. Moreover, we define $K_2(\OO_{K,S})_+$ as the intersection $K_2(K)_+ \cap K_2(\OO_{K,S})$.
\end{dfn}

Equivalently, we can say $K_2(\OO_{K,S})_+$ is the kernel of the map $ T_{\Omega}|_{K_2(\OO_{K,S})}: K_2(\OO_{K,S}) \rightarrow (\F_2)^r$. Note that, using the surjectiveness of $T_{\Omega}|_{\OO_K}$ \cite[Lemma 2.4]{keune1989} and analyzing the commutative square
\begin{center}
    \begin{tikzcd}
         K_2(\OO_K) \arrow[r, two heads, "{T_{\Omega}|}"] \arrow[d] & (\F_2)^{r} \arrow[d, equal]\\
         K_2(\OO_{K,S}) \arrow[r, "{T_{\Omega}|}"]  & (\F_2)^{r},
    \end{tikzcd}
\end{center}
we conclude that $T_{\Omega}|_{K_2(\OO_{K,S})}$ is surjective. Thus, we always have a short exact sequence
\begin{equation}\label{seq-k2+}
    0 \rightarrow K_2(\OO_{K,S})_+ \rightarrow K_2(\OO_{K,S}) \rightarrow (\F_2)^{r} \rightarrow 0.
\end{equation}

A theorem of Garland states that $K_2(\OO_K)$ is a finite group \cite{garland1971}. Additionally, it is well known that this group is the tame kernel, and the same can be said about $K_2(\OO_{K,S})$ when considering only the primes not in $S$ \cite[\S 4]{geijsberts}. By applying the Snake Lemma to the commutative diagram of exact rows
\begin{center}
    \begin{tikzcd}
        0 \arrow[r] & K_2(\OO_{K}) \arrow[r] \arrow[d] &  K_2(K) \arrow[r] \arrow[d, equals] & \bigoplus_{\ppp \in \Spec(\OO_K)} \kappa(\ppp)^{\times} \arrow[r]\arrow[d, two heads] & 0\\
        0 \arrow[r] & K_2(\OO_{K, S}) \arrow[r] &  K_2(K) \arrow[r] & \bigoplus_{\ppp \notin S} \kappa(\ppp)^{\times} \arrow[r] & 0,
    \end{tikzcd}
\end{center}
 we conclude that $K_2(\OO_{K,S})$ is finite.  

The next lemma characterizes $\tilde{K_2}(2, \OO_{K,S})$ in terms of $K_2(\OO_{K,S})_+$.

\begin{lem} \cite[Lemma 6.3]{h2016}
\label{K2tilde}
    $\tilde{K_2}(2, \OO_{K,S}) \simeq K_2(\OO_{K,S})_+\oplus \ \z^r$,
    where $r$ denotes the number of real embeddings of $K$. 
\end{lem}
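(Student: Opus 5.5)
The plan is to compare the two short exact sequences defining the groups involved, using the exact sequence $0 \to I^3(K) \to K_2(2,K) \xrightarrow{\psi} K_2(K) \to 0$ and the structure of $I^3(K)$ for a global field.

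First, $I^3(K)$ should be identified. For a global field, the signature map gives $I^3(K) \simeq \z^r$ (in fact $I^3(K) \simeq (8\z)^r$) by the Hasse--Minkowski theory of quadratic forms. When $r=0$ this group vanishes, so $I^3(K)\simeq \z^r$ in all cases; this is the classical input we quote.

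Next, recall that $T_S = \tau_S\circ\psi$, where $\tau_S=\oplus_{\ppp\notin S}\tau_\ppp$. Since the tame kernel satisfies $\ker\tau_S = K_2(\OO_{K,S})$ by the diagram preceding the lemma, $\tilde K_2(2,\OO_{K,S}) = \psi^{-1}(K_2(\OO_{K,S}))$. Restricting $\psi$ therefore yields a short exact sequence
\[
0 \to I^3(K) \to \tilde K_2(2,\OO_{K,S}) \to K_2(\OO_{K,S}) \to 0 .
\]
This does not split naively, because $K_2(\OO_{K,S})$ has $2$-torsion mapping onto $(\F_2)^r$ while $I^3(K)$ is free. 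We therefore refine it using the signs. There are homomorphisms $\mathrm{sgn}_\sigma: K_2(2,K)\simeq K_2^{MW}(K) \to K_2^{MW}(\mathbb{R}) \to \z$, one for each $\sigma \in \Omega$, the last map being the real signature-type map ($K_2^{MW}(\mathbb R)\simeq \z\oplus$ divisible, with $c(-1,-1)\mapsto \pm1$). These satisfy $\mathrm{sgn}_\sigma \bmod 2 = T_\sigma\circ\psi$, and on $I^3(K)$ the map $\mathrm{sgn}_\sigma$ is multiplication onto $2\z$ (or the appropriate index).

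Consider $\Sigma=(\mathrm{sgn}_\sigma)_\sigma : \tilde K_2(2,\OO_{K,S}) \to \z^r$. We check that $\Sigma$ is surjective, using surjectivity of $T_\Omega$ on $K_2(\OO_{K,S})$ (from \eqref{seq-k2+}) together with the image of $I^3$. We then show that $\ker\Sigma \simeq K_2(\OO_{K,S})_+$ via $\psi$: $\psi$ is injective on $\ker\Sigma$ because $\ker\Sigma\cap I^3(K)=0$, since signatures detect $I^3$. The image of $\ker\Sigma$ lies in $K_2(\OO_{K,S})_+$ because $T_\Omega\circ\psi=\Sigma \bmod 2$. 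It is all of $K_2(\OO_{K,S})_+$ because a lift $x$ of an element of $K_2(\OO_{K,S})_+$ has even signatures, so it can be corrected by an element of $I^3(K)$ with the same signatures; this step requires exactly that $\Sigma(I^3(K)) = (2\z)^r$ with $\Sigma(\tilde K_2)$ normalized accordingly. Since $\z^r$ is free, the sequence $0\to K_2(\OO_{K,S})_+\to \tilde K_2(2,\OO_{K,S})\to \z^r\to 0$ splits, which gives the statement.

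The main obstacle is the precise normalization of the signature maps: we must define $\mathrm{sgn}_\sigma$ on $K_2^{MW}(K)$ so that its reduction mod $2$ equals $T_\sigma\circ\psi$ and its image on $I^3(K)$ is exactly $2\z$ inside the image of $\tilde K_2$. To settle this, we would compute with the generators $c(u,v)$ and $\langle\langle -1,-1,-1\rangle\rangle$ over $\mathbb R$, where $K_2^{MW}(\mathbb R)$ is explicit. Alternatively, the whole computation can be imported from \cite[Lemma 6.3]{h2016}.
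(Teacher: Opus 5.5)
The paper gives no argument for this lemma; it imports it from \cite[Lemma 6.3]{h2016}. Your proposal is therefore a self-contained proof, and it is correct.

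Pulling back the tame-kernel sequence along $\psi$ gives $0\to I^3(K)\to\tilde K_2(2,\OO_{K,S})\to K_2(\OO_{K,S})\to 0$. Splitting off $\z^r$ by a signature map $\Sigma$ then amounts to identifying $\tilde K_2(2,\OO_{K,S})$ with the fibre product $\z^r\times_{(\F_2)^r}K_2(\OO_{K,S})$. Since $K_2(\OO_{K,S})\to(\F_2)^r$ is onto by \eqref{seq-k2+}, this fibre product is $\z^r\oplus K_2(\OO_{K,S})_+$.

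The normalization you flag as the main obstacle causes no trouble. Take $\mathrm{sgn}_\sigma$ to be the composite of $K_2(2,K)\to I^2(K)$, $c(u,v)\mapsto\langle\langle u,v\rangle\rangle=\langle 1,-u,-v,uv\rangle$, with $\tfrac14$ of the signature at $\sigma$. Relations (a)--(e) of Theorem \ref{matso1} can be checked directly in $W(K)$, and up to sign this is the map inducing $\ker\psi\simeq I^3(K)$.
\begin{itemize}
\item On generators, $\mathrm{sgn}_\sigma(c(u,v))$ is $1$ if $\sigma(u),\sigma(v)<0$ and $0$ otherwise, so $\mathrm{sgn}_\sigma\equiv T_\sigma\circ\psi \pmod 2$.
\item On $I^3(K)$, $\Sigma$ is the total signature divided by $4$. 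It is injective because $I^3(K)$ of a global field is torsion free and detected by real signatures (Hasse--Minkowski plus the Arason--Pfister Hauptsatz).
\item Its image is exactly $(2\z)^r$: by weak approximation choose $a$ negative at exactly one real place, and use $\langle\langle a,a,a\rangle\rangle$.
\end{itemize}
Alternatively, $\operatorname{Hom}(K_2^{MW}(\mathbb R),\z)\simeq\z$, since the complement of $\z$ is divisible. So the real map you describe, with $c(-1,-1)\mapsto\pm1$, is forced up to sign, and these properties follow automatically.

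Compared with the bare citation, your route makes the inputs explicit: the description of $\ker\psi$ inside $I^2(K)$, the isomorphism $I^3(K)\simeq(8\z)^r$ for global fields, and the surjectivity of $T_\Omega$ on $K_2(\OO_{K,S})$.
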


Some interesting examples of $\tilde{K}_2(2, \OO_{K,S})$ are as follows.

\begin{exa}\cite[Example 6.2]{h2016}
\label{irredk2z}
    If $K=\q$ and $m >1$ is an integer, then $\tilde{K_2}(2, \z[1/m])\simeq \z \oplus \bigoplus_{p\mid m} \F_p^{\times}$.
\end{exa}

\begin{exa} \cite[Lemma 6.1]{h2016}
\label{K2Ima1}
    If $K$ is a global field of positive characteristic or a totally imaginary number field, then 
    $\tilde{K_2}(2, \OO_{K,S}) \simeq K_2(\OO_{K,S})$. 
\end{exa}

For next theorem, recall the definition of $\mathcal{S}_2$ given in (\ref{S_2}). This is the main theorem of \cite{h2016}, and shows that $\ker(H_2(\SL_2(\OO_{K,S}), \z) \rightarrow H_2(\SL_2(K), \z))$ is trivial for $S$ big enough. 

\begin{thm}\cite[Theorem 6.10]{h2016}\label{Main2}
   Given a global field $K$, there exists a finite set of primes $S_0 \supseteq \mathcal{S}_2$ such that, for all $S \supseteq S_0$, 
    \[
    H_2(\SL_2(\OO_{K,S}), \z) \simeq \tilde{K_2}(2, \OO_{K,S}).
    \]
    Hence, there is a short exact sequence
    \[ 0 \rightarrow H_2(\SL_2(\OO_{K,S}), \z) \rightarrow H_2(\SL_2(K), \z) \rightarrow \bigoplus_{\ppp \notin S} \kappa(\ppp)^{\times} \rightarrow 0.\]
\end{thm}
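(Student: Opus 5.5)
The first assertion, the isomorphism $H_2(\SL_2(\OO_{K,S}), \z) \simeq \tilde{K_2}(2, \OO_{K,S})$ for $S \supseteq S_0$, is Hutchinson's main theorem \cite[Theorem 6.10]{h2016}. I would not reprove it. The plan is to check that our hypotheses put us in its range, and then derive the short exact sequence from it together with Corollary \ref{OKS_OKK}.

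First, choose $S_0$. Hutchinson's theorem is stated for $S$ large enough that there is a unit $\lambda$ with $\lambda^2-1$ also a unit; this condition forces all primes with residue field $\F_2$ or $\F_3$ into $S$. Enlarging Hutchinson's set by $\mathcal{S}_2$ (and, if needed, by the finitely many primes with residue field $\F_3$) gives a finite set $S_0 \supseteq \mathcal{S}_2$. Any $S \supseteq S_0$ still satisfies Hutchinson's hypotheses, because adding primes only enlarges the unit group.

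Second, identify the map. Under Matsumoto--Moore (Theorem \ref{matso1}) we have $H_2(\SL_2(K),\z) \simeq K_2(2,K)$. The isomorphism of \cite{h2016} is induced by the inclusion $\SL_2(\OO_{K,S}) \hookrightarrow \SL_2(K)$, so the natural map $H_2(\SL_2(\OO_{K,S}),\z) \to H_2(\SL_2(K),\z)$ is injective with image $\tilde{K_2}(2,\OO_{K,S}) = \ker T_S$. Since $S \supseteq \mathcal{S}_2$, Remark \ref{2inS} gives $2 \nmid |H_1(\SL_2(\OO_{K,S}),\z)|$, so Corollary \ref{OKS_OKK} applies. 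It gives exactness of
\[
H_2(\SL_2(\OO_{K,S}),\z) \to H_2(\SL_2(K),\z) \to \bigoplus_{\ppp\notin S}\kappa(\ppp)^\times \to 0.
\]
Together with the injectivity above, this yields the required short exact sequence.

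The main obstacle is compatibility of the boundary maps. The map in Corollary \ref{OKS_OKK} is built from the connecting maps $\delta_\ppp$ of the Mayer--Vietoris sequences, whereas $\tilde{K_2}(2,\OO_{K,S})$ is defined as the kernel of $T_S$, assembled from the tame symbols. To avoid comparing them directly, I would argue with kernels only. Exactness at the middle term of the corollary says the kernel of $H_2(\SL_2(K),\z) \to \bigoplus_{\ppp\notin S}\kappa(\ppp)^\times$ equals the image of $H_2(\SL_2(\OO_{K,S}),\z)$, which is $\ker T_S$. Both maps to $\bigoplus_{\ppp\notin S}\kappa(\ppp)^\times$ are surjective, so the resulting short exact sequences have the same first two terms and isomorphic cokernels. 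This suffices for the statement as written. Failing that, I would invoke the remark after Lemma \ref{OK,S-OKS,P} that $\delta_\ppp$ agrees with the tame symbol, as in \cite[Theorem 5.17]{h2016} and \cite[\S 6]{BBT2025-1}.
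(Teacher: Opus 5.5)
Your proposal is correct and follows the paper's route. The paper gives no argument beyond citing \cite[Theorem 6.10]{h2016}, whose unit condition already forces $\mathcal{S}_2\subseteq S_0$, with the isomorphism realized by the natural map onto $\ker T_S\subseteq K_2(2,K)$; your compatibility concern is unnecessary, since injectivity plus the exactness in Corollary \ref{OKS_OKK} (or surjectivity of $T_S$) already gives the sequence without comparing $\delta_\ppp$ with the tame symbols.
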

 

\begin{rem}
   In Hutchinson's original formulation, he states that there exists a set of primes $S_0$ such that $\OO_{K,S_0}$ contains a unit $\lambda$ with $\lambda^2 -1$ unit, and satisfies the isomorphism with $\tilde{K}_2(2, \OO_{K,S_0})$. We believe that, in general, we can find a set of primes $S_0$ satisfying weaker conditions for which $H_2(\SL_2(\OO_{K,S}), \z) \simeq \tilde{K}_2(2, \OO_{K,S})$, for all $S \supseteq S_0$ (for instance, see Example \ref{Z1/m}). 
   
\end{rem}
\begin{rem}
\label{Sharpness}
     The isomorphism between $H_2(\SL_2(\OO_{K,S}), \z)$ and $\tilde{K}_2(2, \OO_{K,S})$ is not always true. For example, by \cite{an1998}, we have $H_2(\SL_2(\z[1/37]),\z) \simeq \z^6 \oplus \z/6$. But, according to Example \ref{irredk2z}, $\tilde{K_2}(2, \z[1/37])\simeq \z \oplus \F_{37}^\times$. In this case, $2 \mid |H_1(\SL_2(\z[1/37]),\z)| = 12$, suggesting that the hypothesis of $2 \nmid |H_1(\SL_2(\OO_{K,S}), \z)|$ (or $S \supseteq \mathcal{S}_2$) is needed. 
     
\end{rem}


\subsection{The torsion of \texorpdfstring{$H_2(\SL_2(\OO_{K,S}), \z)$}{Lg}}
As we have seen in the previous subsection, there are cases where the second homology of $\SL_2(\OO_{K,S})$ is isomorphic to the group $\tilde{K}_2(2, \OO_{K,S})$. Now, we explore the consequences: provided we have this isomorphism, we can obtain the rank and size of the torsion of the second homology.

\begin{thm} \label{torsionH_2}
    Given a global field $K$, with $r$ real embeddings, there exists a finite set of primes $S_0 \supseteq \mathcal{S}_2$ such that, for all $S \supseteq S_0$,
    \begin{enumerate}
        \item[(i)] $\operatorname{rk}(H_2(\SL_2(\OO_{K,S})), \z)=r$;
        \item[(ii)] $|H_2(\SL_2(\OO_{K,S}), \z)_{\operatorname{tor}}| = |K_2(\OO_{K,S})|/{2^r}$.
    \end{enumerate}
\end{thm}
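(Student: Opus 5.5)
The plan is to take $S_0$ to be the finite set provided by Theorem \ref{Main2}. For every $S \supseteq S_0$ we then have an isomorphism
\[
H_2(\SL_2(\OO_{K,S}), \z) \simeq \tilde{K}_2(2, \OO_{K,S}),
\]
and Lemma \ref{K2tilde} rewrites the right-hand side as $K_2(\OO_{K,S})_+ \oplus \z^r$. So the whole argument reduces to understanding the finitely generated abelian group $K_2(\OO_{K,S})_+ \oplus \z^r$.

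For (i), recall that $K_2(\OO_{K,S})$ is finite, by Garland's theorem combined with the Snake Lemma argument via the tame kernel recalled above. Its subgroup $K_2(\OO_{K,S})_+$ is therefore finite as well. Hence the torsion part of $K_2(\OO_{K,S})_+ \oplus \z^r$ is exactly $K_2(\OO_{K,S})_+$ and its free part is $\z^r$. Since rank and torsion subgroup are isomorphism invariants, this gives
\[
\operatorname{rk} H_2(\SL_2(\OO_{K,S}), \z) = r \quad\text{and}\quad H_2(\SL_2(\OO_{K,S}), \z)_{\operatorname{tor}} \simeq K_2(\OO_{K,S})_+ .
\]

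For (ii), use the short exact sequence (\ref{seq-k2+}),
\[
0 \rightarrow K_2(\OO_{K,S})_+ \rightarrow K_2(\OO_{K,S}) \rightarrow (\F_2)^r \rightarrow 0,
\]
whose right map is surjective by the surjectivity of $T_\Omega$ on $K_2(\OO_K)$. Since all groups in it are finite, it yields $|K_2(\OO_{K,S})_+| = |K_2(\OO_{K,S})|/2^r$, which is the claimed formula.

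There is no real obstacle once Theorem \ref{Main2} and Lemma \ref{K2tilde} are granted. The only points that need care are the following.
\begin{itemize}
\item The same $S_0$ must work for every $S \supseteq S_0$. This is built into the statement of Theorem \ref{Main2}, which also guarantees $S_0 \supseteq \mathcal{S}_2$.
\item The surjectivity of $T_\Omega|_{K_2(\OO_{K,S})}$ must hold for arbitrary $S$. This follows from the commutative square with $K_2(\OO_K)$ recorded before (\ref{seq-k2+}).
\end{itemize}
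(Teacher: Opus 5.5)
Your proposal is correct and matches the paper's proof: you take $S_0$ from Theorem \ref{Main2}, rewrite the homology as $K_2(\OO_{K,S})_+ \oplus \z^r$ via Lemma \ref{K2tilde}, and count orders with the sequence (\ref{seq-k2+}). You also say explicitly that $K_2(\OO_{K,S})_+$ is finite, hence is exactly the torsion subgroup; the paper uses this step without stating it.
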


\begin{proof}
    Let $S_0$ be the set of primes given by Theorem \ref{Main2}. From Lemma \ref{K2tilde}, we know that 
    \[ H_2(\SL_2(\OO_{K,S}), \z) \simeq K_2(\OO_{K,S})_+ \oplus \z^r.\]
    Since $K_2(\OO_{K,S})/K_2(\OO_{K,S})^{+} \simeq (\F_2)^r$ (see (\ref{seq-k2+})), we have that 
    \[|K_2(\OO_{K,S})_+| = \frac{|K_2(\OO_{K,S})|}{2^r}.\]
    We conclude that the rank of the second homology of $\SL_2(\OO_{K,S})$ is $r$ and the size of the torsion is $|K_2(\OO_{K,S})|/2^r$. 
\end{proof}

We remark that the size of the group $K_2(\OO_{K,S})$ has been obtained by numerical methods in many cases, such as for the ring of integers of real quadratic number fields \cite{limqin2025}.




The previous result can be seen through the lens of cohomology. This is done in view of the next lemma, for which we present a proof for completeness (we also refer to \cite[\S 3.2]{PIBT}).

\begin{lem}\label{coh-torsion}
    Let $G$ be a group such that $H_n(G, \z)$ and $H_{n+1}(G, \z)$ are finitely generated. Then, $H^{n+1}(G, \z)$ is also finitely generated and
    \[ \operatorname{rk}H_{n +1}(G, \z) = \operatorname{rk} H^{n+1}(G, \z) \ \ \text{ and } \ \ H_n(G, \z)_{\operatorname{tor}} \simeq H^{n+1}(G, \z)_{\operatorname{tor}}.\]
\end{lem}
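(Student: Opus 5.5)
The plan is to use the Universal Coefficient Theorem for cohomology with trivial coefficients $\z$. Regard $\z$ as a trivial $G$-module. For every $k$ the UCT gives a natural short exact sequence, split but not naturally,
\[
0 \arr \operatorname{Ext}^1_{\z}(H_{k}(G,\z),\z) \arr H^{k+1}(G,\z) \arr \operatorname{Hom}_{\z}(H_{k+1}(G,\z),\z) \arr 0.
\]
We apply this with $k=n$. Since the sequence splits, we get
\[
H^{n+1}(G,\z)\simeq \operatorname{Hom}(H_{n+1}(G,\z),\z)\oplus \operatorname{Ext}^1(H_n(G,\z),\z).
\]
The argument therefore reduces to computing each summand, using that both $H_n(G,\z)$ and $H_{n+1}(G,\z)$ are finitely generated.

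\textbf{The Hom term.} Write $H_{n+1}(G,\z)\simeq \z^{a}\oplus F$ with $F$ finite, so that $a=\operatorname{rk}H_{n+1}(G,\z)$. Homomorphisms from a finite group to $\z$ vanish, so $\operatorname{Hom}(H_{n+1}(G,\z),\z)\simeq \z^{a}$, which is free of rank $a$.

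\textbf{The Ext term.} Write $H_n(G,\z)\simeq \z^{b}\oplus H_n(G,\z)_{\operatorname{tor}}$. Here $\operatorname{Ext}^1(\z,\z)=0$, and for a finite cyclic group $\operatorname{Ext}^1(\z/m,\z)\simeq \z/m$. Since $\operatorname{Ext}^1$ commutes with finite direct sums, $\operatorname{Ext}^1(H_n(G,\z),\z)\simeq H_n(G,\z)_{\operatorname{tor}}$ (non-canonically, via Pontryagin duality of finite groups). This summand is finite.

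Putting the two together,
\[
H^{n+1}(G,\z)\simeq \z^{a}\oplus H_n(G,\z)_{\operatorname{tor}}.
\]
This group is finitely generated, its rank is $a=\operatorname{rk}H_{n+1}(G,\z)$, and its torsion subgroup is isomorphic to $H_n(G,\z)_{\operatorname{tor}}$, because $\z^a$ is torsion-free. That is exactly the statement.

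There is no serious obstacle here. The only point needing care is that the UCT applies: the chain complex $\z\otimes_{\z G}F_\bullet$, obtained from a free resolution $F_\bullet$ of $\z$ over $\z G$, is a complex of free abelian groups. This holds, so the UCT can be used directly, and the rest is bookkeeping with finitely generated abelian groups.
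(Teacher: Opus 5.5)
Your proof is correct and follows essentially the same route as the paper. Both use the split Universal Coefficient sequence for $H^{n+1}(G,\z)$ and then compute $\operatorname{Hom}_{\z}(H_{n+1}(G,\z),\z)\simeq \z^{\operatorname{rk}H_{n+1}(G,\z)}$ and $\operatorname{Ext}^1_{\z}(H_n(G,\z),\z)\simeq H_n(G,\z)_{\operatorname{tor}}$ via finite direct sums; your check that $\z\otimes_{\z G}F_\bullet$ is a complex of free abelian groups, so that the UCT applies, is a point the paper leaves implicit.
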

\begin{proof}
    The Universal Coefficients Theorem for cohomology \cite[Theorem 12.11]{rotman1988} gives the following split short exact sequence:
    \[
    0 \arr {\rm Ext}_{\z}^1(H_n(G, \z), \z) \arr H^{n+1}(G, \z) \arr {\rm Hom}_{\z}(H_{n+1}(G,\z), \z) \arr 0.
    \]
    Recall some well-known properties \cite{rotman2008}: the functors $\rm Ext_{\z}^1( -, \z)$ and $\rm Hom_{\z}(-, \z)$ commute with direct sums of finitely many summands; $\rm Ext_{\z}^1(\z/m\z, \z) \simeq \z/m\z$; $\rm Hom_{\z}(\z, \z) \simeq \z$; $\rm Ext_{\z}^1(\z, \z)$ and $\rm Hom_{\z}(\z/n\z, \z)$ are trivial.

    By applying these properties, the above short exact sequence boils down to
    \[
    0 \arr H_n(G, \z)_{\operatorname{tor}} \arr H^{n+1}(G, \z) \arr \z^{\operatorname{rk}H_{n +1}(G, \z)} \arr 0,
    \]
    and so the result follows by the splitting.
\end{proof}

Theorem \ref{torsionH_2} then becomes as follows.

\begin{cor} \label{H2structCoh}
    Given a global field $K$, with $r$ real embeddings, there exists a finite set of primes $S_0 \supseteq \mathcal{S}_2$ such that, for all $S \supseteq S_0$, 
    \begin{enumerate}
        \item[(i)] $\operatorname{rk}(H^2(\SL_2(\OO_{K,S})), \z)=r$;
        \item[(ii)] $|H^3(\SL_2(\OO_{K,S}), \z)_{\operatorname{tor}}| = |K_2(\OO_{K,S})|/{2^r}$.
    \end{enumerate}
\end{cor}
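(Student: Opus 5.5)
The plan is to combine Theorem \ref{torsionH_2} with Lemma \ref{coh-torsion}, taking $G=\SL_2(\OO_{K,S})$. First I will fix $S_0$ to be the set given by Theorem \ref{torsionH_2}, which is the same set produced by Theorem \ref{Main2}. Then, for every $S\supseteq S_0$, I know that $H_2(\SL_2(\OO_{K,S}),\z)$ has rank $r$ and that its torsion subgroup has order $|K_2(\OO_{K,S})|/2^r$.

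To apply Lemma \ref{coh-torsion} I must check the finite generation hypotheses. The group $H_2(\SL_2(\OO_{K,S}),\z)$ is finitely generated by \cite[Lemmas 3.14 and 3.15]{PIBT}, as recalled in Section \ref{sec1}. The group $H_1(\SL_2(\OO_{K,S}),\z)$ is finite by Propositions \ref{H1CHAR0} and \ref{H1CHAR>0}, and $H_0=\z$. For part (ii) I also need $H_3(\SL_2(\OO_{K,S}),\z)$ to be finitely generated. The cleanest justification is that $\SL_2(\OO_{K,S})$ is an $S$-arithmetic group. By Borel--Serre, and Raghunathan in positive characteristic where the group has type $FP_\infty$ in the relevant cases, its integral homology is finitely generated in every degree. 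If one wants to avoid this, part (ii) should be phrased only through the $\mathrm{Ext}$ term, which needs no hypothesis on $H_3$.

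With the hypotheses in place, the two parts follow from two applications of the lemma:
\begin{itemize}
\item[(i)] Apply Lemma \ref{coh-torsion} with $n=1$. This gives $\operatorname{rk}H^2(G,\z)=\operatorname{rk}H_2(G,\z)=r$. Here $H_1(G,\z)$ and $H_2(G,\z)$ are finitely generated by the previous paragraph.
\item[(ii)] Apply the lemma with $n=2$. This gives $H^3(G,\z)_{\operatorname{tor}}\simeq H_2(G,\z)_{\operatorname{tor}}$, whose order is $|K_2(\OO_{K,S})|/2^r$ by Theorem \ref{torsionH_2}(ii).
\end{itemize}

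For (ii) I can also read the torsion directly from the split universal coefficient sequence, without the full lemma:
\[
0\arr \mathrm{Ext}^1_\z(H_2(G,\z),\z)\arr H^3(G,\z)\arr \mathrm{Hom}_\z(H_3(G,\z),\z)\arr 0.
\]
The $\mathrm{Hom}$ term is torsion-free, so the torsion of $H^3(G,\z)$ is exactly $\mathrm{Ext}^1_\z(H_2(G,\z),\z)\simeq H_2(G,\z)_{\operatorname{tor}}$. This isomorphism holds because $H_2(G,\z)$ is finitely generated. This version needs finite generation of $H_2$ only.

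The only step I expect to be delicate is the finite generation of $H_3$, which is needed just to assert that $H^3$ is finitely generated. It is not needed for the torsion count itself, so the argument for (ii) is robust even if that input is left unverified.
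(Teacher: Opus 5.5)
Your proposal is correct and is exactly the paper's argument: the corollary comes from feeding Theorem \ref{torsionH_2} into Lemma \ref{coh-torsion} with $G=\SL_2(\OO_{K,S})$, taking $n=1$ for the rank of $H^2$ and $n=2$ for the torsion of $H^3$. Your direct identification $H^3(G,\z)_{\operatorname{tor}}\simeq \mathrm{Ext}^1_\z(H_2(G,\z),\z)$ from the universal coefficient sequence is a worthwhile refinement. It removes the need for finite generation of $H_3$, which the paper's appeal to the lemma tacitly uses, and which your $FP_\infty$ remark does not secure in positive characteristic: there $\SL_2(\OO_{K,S})$ is of type $FP_{|S|-1}$ but not $FP_{|S|}$ (Stuhler).
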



\section{A homological version of the Birch-Tate Conjecture}

The aim of this last section is to offer a homological version of Birch-Tate formula for $\OO_{K,S}$ for some $S$ provided that $ 2 \nmid |H_1(\SL_2(\OO_{K,S}), \z)|$. During this section, we assume that $K$ is a totally real extension of $\q$. 


If $s\in \mathbb{C}$ such that $\operatorname{Re}(s)>1$, we define the Dedekind zeta function associated to an algebraic number field $K$ as the convergent series:
\[
\zeta_K(s)=\sum_{\mathfrak{a} \leq \OO_{K}}\frac{1}{N\mathfrak{a}^s}. 
\]
From Tate's thesis \cite{tate1950}, we know that $\zeta_K(s)$ has analytic continuation. One interesting question that we may ask is about the values of $\zeta_K(m)$, for integer values of $m$, but this is in general a very tough task, even deciding whether this is a rational number or not. For $m<0$ we have the Sielgel-Klingen theorem.

\begin{thm} {\cite{klingen1962}}
    Let $m$ be a negative integer. Then $\zeta_K(m) \in \q.$
\end{thm}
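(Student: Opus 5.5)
Since the paper only cites \cite{klingen1962} here, I would follow Siegel's strategy via restriction of Hilbert--Eisenstein series to the diagonal; Shintani's cone decomposition is the alternative if that fails. The first step is a reduction to the totally real case with $m = 1-k$, $k \geq 2$ even. Let $\Lambda_K(s) = |d_K|^{s/2}\Gamma_\mathbb{R}(s)^{r_1}\Gamma_\mathbb{C}(s)^{r_2}\zeta_K(s)$ be the completed zeta function, which satisfies $\Lambda_K(s)=\Lambda_K(1-s)$ (Hecke, or Tate's thesis \cite{tate1950}). Since $\Lambda_K(1-s)$ is finite and nonzero for $\operatorname{Re}(s)>1$ (apart from the known pole at $s=1$), the poles of the Gamma factors at negative integers must be cancelled by zeros of $\zeta_K$. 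If $r_2>0$, the factor $\Gamma(s)$ has poles at all nonpositive integers, so $\zeta_K(m)=0$ for every $m<0$. If $K$ is totally real, $\Gamma(s/2)^{n}$ has poles at the negative even integers, so $\zeta_K(m)=0$ for $m$ even. In both cases the value is $0 \in \q$, and it remains to treat $K$ totally real of degree $n$ and $m=1-k$ with $k$ even.

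Next, for such $K$ and $k$, I would use Hecke's holomorphic Hilbert--Eisenstein series $E_k$ of parallel weight $k$ for $\SL_2(\OO_K)$; for $k=2$ one uses Hecke's trick or the non-holomorphic correction. Its Fourier expansion has the shape
\[
E_k(z) = 2^{-n}\zeta_K(1-k) + \sum_{\nu \gg 0,\ \nu\in \mathfrak{d}^{-1}} \sigma_{k-1}(\nu\mathfrak{d})\, e^{2\pi i\,\mathrm{Tr}(\nu z)},
\]
where $\sigma_{k-1}(\mathfrak{a})=\sum_{\mathfrak{b}\mid\mathfrak{a}} N\mathfrak{b}^{k-1}$ is an integer. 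To obtain this constant term, I would compute the Fourier coefficients of the Eisenstein series via Lipschitz summation. The constant term first appears as a multiple of $\zeta_K(k)$ times a Gamma/discriminant factor, and the functional equation converts it to $2^{-n}\zeta_K(1-k)$. If the class number is greater than $1$, one sums over ideal classes, or works with the series attached to the trivial character on each component; this does not affect the argument.

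Then I would restrict $E_k$ to the diagonal $z=(\tau,\dots,\tau)$. This gives an elliptic modular form $f(\tau)$ of weight $nk$ for $\SL_2(\z)$, whose $q$-expansion is
\[
f = 2^{-n}\zeta_K(1-k) + \sum_{N\geq 1} c_N q^N, \qquad c_N=\sum_{\mathrm{Tr}(\nu)=N}\sigma_{k-1}(\nu\mathfrak{d}) \in \z,
\]
where each inner sum is finite. The space $M_{nk}(\SL_2(\z))$ has a basis with rational $q$-expansions, and the cusp-form part is detected by finitely many coefficients. Concretely, writing $f = a\,G_{nk} + g$ with $g$ a cusp form, comparing the first $\dim M_{nk}$ coefficients against a rational basis (such as the Miller basis) expresses the constant term as a $\q$-linear combination of $c_1,\dots,c_d$. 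Hence $\zeta_K(1-k)\in\q$.

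I expect the main obstacle to be the second step: rigorously establishing the Fourier expansion of $E_k$, especially its convergence and constant term. For $k=2$ there is the additional issue of non-holomorphy, which disappears after restriction only up to a multiple of $E_2$ in the elliptic setting. If this becomes too delicate, I would switch to Shintani's method. There one decomposes $\mathbb{R}_{+}^n$ modulo totally positive units into finitely many simplicial cones with rational generators in $K$, and writes each partial zeta value at $1-k$ as a finite sum of products of Bernoulli polynomials evaluated at rational points, which are manifestly rational.
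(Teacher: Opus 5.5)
The paper gives no proof of this statement. It is quoted from \cite{klingen1962} only as background for the Birch--Tate discussion, so there is no internal argument to compare against. Your sketch is correct in outline and is the classical argument of Klingen and Siegel behind that citation: Hecke's Hilbert--Eisenstein series, restricted to the diagonal and compared with the $\q$-structure of $M_{nk}(\SL_2(\z))$.

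The individual steps check out. The functional-equation reduction to $K$ totally real and $m=1-k$ with $k\geq 2$ even is correct. The normalization $2^{-n}\zeta_K(1-k)$ of the constant term is correct. The $c_N$ are indeed finite sums of positive integers, because $\operatorname{Tr}(\mathfrak{d}^{-1})\subseteq\z$ and only finitely many totally positive $\nu\in\mathfrak{d}^{-1}$ have a given trace. This route also leads to Siegel's explicit formula for $\zeta_K(1-k)$ as a rational combination of the $c_N$. Your Shintani fallback is also valid: it writes partial zeta values directly as finite sums of Bernoulli-polynomial values, with no modular forms.

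Three points need tightening.

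\textbf{The reduction step.} The relevant fact is $\Lambda_K(m)=\Lambda_K(1-m)$ with $1-m\geq 2$. There the Euler product makes $\zeta_K$ finite and nonzero. The pole at $s=1$ plays no role, and your phrase about $\operatorname{Re}(s)>1$ is garbled.

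\textbf{The case $k=2$.} For $n\geq 2$, Hecke's trick produces a \emph{holomorphic} form. The term $N(y)^{-1}$ that yields $E_2^{*}$ when $n=1$ comes, up to a factor holomorphic at $s=0$, with $\Gamma(s)^{-n}\zeta_K(1+2s)$. This vanishes at $s=0$ to order $n-1\geq 1$. The non-constant coefficients at non-totally-positive $\nu$ also die, because of their $1/\Gamma(s)$ factors. For $n=1$ you simply have $\zeta(1-k)=-B_k/k$. So no $E_2$-correction is needed, and your remark about one surviving restriction is inaccurate.

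\textbf{Which coefficients determine the constant term.} The claim that exactly $c_1,\dots,c_d$ with $d=\dim M_{nk}$ determine the constant term is a theorem of Siegel. It says the unique form $1+O(q^d)$ in $M_{nk}$ has nonzero $q^d$-coefficient. You don't need it for rationality:
\begin{itemize}
\item The map $f\mapsto(a_N)_{N\geq 1}$ is injective on $M_w$ for $w>0$, since a nonzero constant is not modular of positive weight.
\item Hence some finite truncation $f\mapsto(a_1,\dots,a_M)$ is already injective.
\item This map is defined over $\q$ on a space with a $\q$-rational basis. So rational $a_1,\dots,a_M$ force $f$ to have a rational $q$-expansion, and in particular $a_0\in\q$.
\end{itemize}
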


For a totally real number field $K$, the famous Birch-Tate conjecture suggests that there is a relation of $K_2(\OO_{K})$ with $\zeta_K(-1)$, as announced follows: 

\begin{conj}[Birch-Tate Conjecture] \label{BTC}
    Let $K$ be a totally real number field. Then, up to a power of $2$,
     \[
     |\zeta_K(-1)|=\frac{|K_2(\OO_{K})|}{|w_2(K)|},
     \] 
     where  $w_2(K)$ be the number of roots of unity on $K(\sqrt{K})$.
\end{conj}

For abelian extensions, this conjecture is known to be true as a consequence of seminal works of Mazur and Wiles {\cite{MW1984, wiles1990}}, where the Iwasawa main conjecture is proven.

\begin{thm}  \label{BTT}
Let $A=\OO_K$ be the ring of integers of a totally real abelian extension. Then
 \[
|\zeta_K(-1)|=\frac{|K_2(\OO_{K})|}{|w_2(K)|}.
\]
\end{thm}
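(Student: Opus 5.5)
The plan is to treat the statement as a collection of $p$-primary equalities, one for each rational prime $p$, and to deduce each from the Iwasawa main conjecture for totally real abelian fields, as proved by Mazur--Wiles \cite{MW1984} for odd $p$ and by Wiles \cite{wiles1990} in general. By the Siegel--Klingen theorem, $w_2(K)\zeta_K(-1)$ is a nonzero rational number, and $K_2(\OO_K)$ is finite by Garland. It therefore suffices to show that, for every prime $p$,
\[
v_p\bigl(|K_2(\OO_K)|\bigr)=v_p\bigl(w_2(K)\,\zeta_K(-1)\bigr).
\]

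For a fixed $p$, the first step is to pass from $K$-theory to étale cohomology. Tate's computation of the tame kernel, combined with the Galois symbol, identifies $K_2(\OO_K)\otimes\z_p$ with $H^2_{\mathrm{et}}(\OO_K[1/p],\z_p(2))$ for $p$ odd. For $p=2$ the analogous identification involves the positive étale cohomology, which is where the totally real hypothesis and the sign maps $T_\sigma$ enter. The second step is to relate $\zeta_K(-1)$ to a $p$-adic $L$-function. Since $K$ is abelian, $\zeta_K(s)=\prod_\chi L(s,\chi)$ over even Dirichlet characters $\chi$ of $K$. The Deligne--Ribet (Kubota--Leopoldt) $p$-adic $L$-functions $L_p(s,\chi\omega^{2})$ interpolate $L(-1,\chi)$, up to the Euler factors at $p$, which are removed by passing from $\OO_K[1/p]$ back to $\OO_K$.

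The third step is to apply the main conjecture over the cyclotomic $\z_p$-extension $K_\infty/K$. It states that the characteristic ideal of the unramified Iwasawa module $X_\infty$ (split into $\chi$-eigenspaces when $p\nmid[K:\q]$, and handled via the Wiles formulation otherwise) is generated by the power series attached to $L_p$. Twisting by $\z_p(2)$ and taking coinvariants under $\Gamma=\mathrm{Gal}(K_\infty/K)$ gives the descent. Because $H^2_{\mathrm{et}}(\OO_K[1/p],\z_p(2))$ is finite, the twisted characteristic series does not vanish at the relevant point. Its value there then computes the order of $H^2$ divided by the order of $H^0$ of the twisted module, and this last order is exactly the $p$-part of $w_2(K)$. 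Comparing with the interpolation formula yields the $p$-primary equality.

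The hardest step is the prime $p=2$, together with the descent when $p$ divides $[K:\q]$. At $p=2$ the identification of $K_2$ with étale cohomology picks up the real places, and one has to keep track of the sign contributions carefully to obtain the exact power of $2$ rather than equality up to a power of $2$. For this step I would first try to use Wiles' version of the main conjecture, which covers $p=2$ for abelian fields, together with the $2$-adic comparison of Kolster and collaborators. Failing that, I would state the result only up to a power of $2$, which is all that is used in the homological version that follows.
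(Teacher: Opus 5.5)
Your route is the paper's: the paper gives no argument beyond attributing the formula to the proofs of the Iwasawa main conjecture by Mazur--Wiles and Wiles. Your prime-by-prime descent (Tate's identification of $K_2(\OO_K)\otimes\z_p$ with $H^2_{\mathrm{et}}(\OO_K[1/p],\z_p(2))$, interpolation of $L(-1,\chi)$ by $L_p(s,\chi\omega^2)$ up to Euler factors at $p$ that are $p$-adic units, and evaluation of the characteristic series) is the standard way that citation is cashed out. Two points to tighten. First, the main conjecture describes the Iwasawa module unramified outside $p$ (or, via Kummer duality, odd eigenspaces of the unramified module of $K(\mu_{p^\infty})$), not the unramified module of the totally real tower $K_\infty$, which is conjecturally finite. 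Second, the statement is an exact equality, so your fallback ``up to a power of $2$'' would not prove it; the $2$-part genuinely requires Wiles' $2$-adic main conjecture for abelian fields together with the $2$-adic comparison of Kolster (and Rognes--Weibel), which is the first option you name.
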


Later, Birch-Tate conjecture was extended by Lichtenbaum \cite{Lich1972}, relating even $K$-groups with odd values of Dedekind zeta functions. As in the case of Birch-Tate conjecture, this conjecture was proved for abelian extension, on which we can cite the work of Rost-Voevodsky \cite{Voevodsky2011}.

Following \cite{sands2006}, there is a general version of Birch-Tate conjecture over Dedekind $\zeta$ functions associated to a ring of $S$-integers $\OO_{K,S}$. For $s \in \mathbb{C}$, with $\operatorname{Re}(s) > 1$,
\[ \zeta_K^{S}(s) =\sum_{\mathfrak{a} \leq \OO_{K,S}}\frac{1}{N\mathfrak{a}^s},\] 
and we extend the function by analytic continuation. This leads to a Birch-Tate conjecture for ring of $S$-integers. 

\begin{conj}
    Let $\OO_{K,S}$ be a ring of $S$-integers. Then, up to a power of $2$, 
    \[
    |\zeta_K^{S}(-1)|= \frac{|K_2(\OO_{K,S})|}{|w_2(K)|}.
    \]
\end{conj}

We now present a homological version of the Birch-Tate formula inspired by the results developed in the last section. As immediate consequences of Theorem \ref{torsionH_2}, we restate the Birch-Tate conjecture in terms of $H_2(\SL_2(\OO_{K, S}), \z)$ and the Birch-Tate formula, assuming that there is no prime $\ppp$ over $2$, with $|\kappa(\ppp)|=2$, i.e. $2 \nmid |H_1(\SL_2(\OO_{K,S}), \z)|$. 

\begin{conj}
\label{BThomconj}
    Let $\OO_{K,S}$ be a ring of $S$-integers of a totally real extension $K$ with $[K: \q] = r$. Then, there is an $S_0 \supseteq \mathcal{S}_2$ such that, for all $S \supseteq S_0$, we have, up to a power of $2$,
    \[
    |\zeta_K^S(-1)|=\frac{2^r \cdot |H_2(\SL_2(\OO_{K,S}), \z)_{\operatorname{tor}}|}{|w_2(K)|}.
    \]
\end{conj}

Then, Theorem \ref{BTT} can be restated in homological version as follows. 

\begin{thm}
\label{BThomthm}
    Let $\OO_{K,S}$ be a ring of $S$-integers of a totally real extension $K$ with $[K: \q] = r$. Then, there is an $S_0 \supseteq \mathcal{S}_2$ such that, for all $S \supseteq S_0$, we have, up to a power of $2$,
    \[
    |\zeta_K^S(-1)|=\frac{2^r \cdot |H_2(\SL_2(\OO_{K,S}), \z)_{\operatorname{tor}}|}{|w_2(K)|}.
    \]  
\end{thm}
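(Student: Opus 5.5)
The plan is to combine Theorem \ref{torsionH_2} with an $S$-integer version of the Birch-Tate formula for totally real abelian fields. (The statement should be read with $K$ totally real abelian, as in the introduction, since Theorem \ref{BTT} is only available in that case.) Fix $K$ and take $S_0 \supseteq \mathcal{S}_2$ as given by Theorem \ref{Main2}, so that Theorem \ref{torsionH_2} applies to every $S \supseteq S_0$. Since $K$ is totally real, the number of real embeddings equals $[K:\q]=r$. Part (ii) of that theorem then gives
\[
2^r \cdot |H_2(\SL_2(\OO_{K,S}), \z)_{\operatorname{tor}}| = |K_2(\OO_{K,S})|.
\]
So the claimed identity is equivalent to the $S$-integer Birch-Tate formula $|\zeta_K^S(-1)| = |K_2(\OO_{K,S})|/|w_2(K)|$, up to a power of $2$.

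To prove that formula from Theorem \ref{BTT}, I compare both sides with the case $S = S_\infty$.

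\textbf{Zeta side.} $\zeta_K^S(s)$ is obtained from $\zeta_K(s)$ by removing the Euler factors at the finite primes of $S$:
\[
\zeta_K^S(s) = \zeta_K(s)\prod_{\ppp \in S\setminus S_\infty}(1-N\ppp^{-s}).
\]
Evaluating at $s=-1$ gives
\[
\zeta_K^S(-1) = \zeta_K(-1)\prod_{\ppp \in S \setminus S_\infty}(1 - N\ppp).
\]
Hence $|\zeta_K^S(-1)| = |\zeta_K(-1)| \prod_{\ppp} |\kappa(\ppp)^\times|$.

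\textbf{$K$-theory side.} Apply the snake lemma to the diagram of localization sequences displayed before Lemma \ref{K2tilde}. Both rows have $K_2(K)$ in the middle, and the right vertical map is the projection. This yields the short exact sequence
\[
0 \to K_2(\OO_K) \to K_2(\OO_{K,S}) \to \bigoplus_{\ppp \in S\setminus S_\infty} \kappa(\ppp)^\times \to 0.
\]
Since all these groups are finite, we get $|K_2(\OO_{K,S})| = |K_2(\OO_K)|\prod_{\ppp \in S\setminus S_\infty}|\kappa(\ppp)^\times|$.

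\textbf{Conclusion.} The Euler factors on the two sides coincide. Multiplying the identity of Theorem \ref{BTT} by $\prod_{\ppp}|\kappa(\ppp)^\times|$ therefore gives $|\zeta_K^S(-1)| = |K_2(\OO_{K,S})|/|w_2(K)|$. Substituting the displayed expression for $|K_2(\OO_{K,S})|$ from Theorem \ref{torsionH_2} gives the result, up to a power of $2$.

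The only delicate step is checking that the snake lemma really produces the short exact sequence with finite-prime residue fields. This requires the surjectivity of the tame symbol map $K_2(K) \to \bigoplus_{\ppp}\kappa(\ppp)^\times$ and the identification of $K_2(\OO_{K,S})$ with the tame kernel outside $S$, both cited in the paper. The powers of $2$ are absorbed by the statement's ``up to a power of $2$'', so no $2$-primary bookkeeping is needed.
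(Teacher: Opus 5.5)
Your proof is correct and follows the paper's route. The paper gets the statement simply by substituting $|K_2(\OO_{K,S})| = 2^r\,|H_2(\SL_2(\OO_{K,S}),\z)_{\operatorname{tor}}|$ from Theorem \ref{torsionH_2} into the Birch--Tate formula of Theorem \ref{BTT}, for $K$ totally real abelian as in the introduction. The paper passes from $\OO_K$ to $\OO_{K,S}$ without comment. Your Euler-factor identity $\zeta_K^S(-1)=\zeta_K(-1)\prod_{\ppp\in S\setminus S_\infty}(1-N\ppp)$, together with the exact sequence $0\to K_2(\OO_K)\to K_2(\OO_{K,S})\to\bigoplus_{\ppp\in S\setminus S_\infty}\kappa(\ppp)^\times\to 0$, correctly supplies that omitted step.
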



\end{document}